\numberwithin{equation}{section}
\theoremstyle{plain}
\newtheorem{thm}{Theorem}[section]
\newtheorem{prop}[thm]{Proposition}
\newtheorem{lemma}[thm]{Lemma}
\newtheorem{cor}[thm]{Corollary}
\theoremstyle{definition}
\newtheorem{rmk}[thm]{Remark}
\newtheorem{exam}[thm]{Example}
\newenvironment{MSC}{%
\smallbreak
\noindent \textbf{2010\ Mathematics Subject Classification\,:}}
\newenvironment{keywords}{%
\noindent\textbf{Key words and phrases\,:}\itshape}
\title{On Schur multiple zeta functions: \\
 A combinatoric generalization of multiple zeta functions}
\author{
Maki Nakasuji\thanks{Supported by Sophia Lecturing-Research Grants 2015, Grant-in-Aid for Young Scientists (B) 15K17519 and JSPS Joint Research Project with CNRS.}, 
Ouamporn Phuksuwan and 
Yoshinori Yamasaki\thanks{Supported by Grant-in-Aid for Scientific Research (C) 15K04785.} 
} 
\date{\today}
\begin{document}

\setlength{\baselineskip}{15pt}
\maketitle

\begin{abstract} 
 We introduce Schur multiple zeta functions
 which interpolate both the multiple zeta and multiple zeta-star functions of the Euler-Zagier type combinatorially.
 We first study their basic properties including a region of absolute convergence and the case where all variables are the same.
 Then, under an assumption on variables, some determinant formulas coming from theory of Schur functions
 such as the Jacobi-Trudi, Giambelli and dual Cauchy formula are established 
 with the help of Macdonald's ninth variation of Schur functions. 
 Moreover, we investigate the quasi-symmetric functions corresponding to the Schur multiple zeta functions.
 We obtain the similar results as above for them and, furthermore,  
 describe the images of them by the antipode of the Hopf algebra of quasi-symmetric functions explicitly. 
 Finally, we establish iterated integral representations of the Schur multiple zeta values of ribbon type,
 which yield a duality for them in some cases.     
\begin{MSC}
 11M41, 
 05E05. 
\end{MSC} 
\begin{keywords}
 Multiple zeta functions, 
 Schur functions,
 Jacobi-Trudi formula,
 quasi-symmetric functions
\end{keywords}
\end{abstract}

\tableofcontents


\section{Introduction}
\label{sec:Introduction}

 The multiple zeta function and the multiple zeta-star function (MZF and MZSF for short)
 of the Euler-Zagier type are respectively defined by the series
\begin{align*}
 \zeta({\pmb s})
=\sum_{m_1<\cdots <m_n}\frac{1}{m_1^{s_1}\cdots m_n^{s_n}}, \quad 
 \zeta^{\star}({\pmb s})
=\sum_{m_1\le \cdots \le m_n}\frac{1}{m_1^{s_1}\cdots m_n^{s_n}},
\end{align*}
 where ${\pmb s}=(s_1,\ldots,s_n)\in\mathbb{C}^n$.
 These series converge absolutely for $\Re(s_1),\ldots,\Re(s_{n-1})\ge 1$ and $\Re(s_n)>1$
 (see, e.g., \cite{Mat} for more precise description about the region of absolute convergence).
 One easily sees that a MZSF can be expressed as a linear combination of MZFs, and vice versa.
 For instance,
\begin{align*}
 \zeta^{\star}(s_1,s_2)
&=\zeta(s_1,s_2)+\zeta(s_1+s_2),\\
 \zeta(s_1,s_2)
&=\zeta^{\star}(s_1,s_2)-\zeta^{\star}(s_1+s_2),\\
 \zeta^{\star}(s_1,s_2,s_3)
&=\zeta(s_1,s_2,s_3)+\zeta(s_1+s_2,s_3)+\zeta(s_1,s_2+s_3)+\zeta(s_1+s_2+s_3),\\
 \zeta(s_1,s_2,s_3)
&=\zeta^{\star}(s_1,s_2,s_3)-\zeta^{\star}(s_1+s_2,s_3)-\zeta^{\star}(s_1,s_2+s_3)+\zeta^{\star}(s_1+s_2+s_3),
\end{align*}
 where $\zeta(s)=\zeta^{\star}(s)$ is the Riemann zeta function.
 More generally, we have  
\begin{equation}
\label{for:zsz}
 \zeta^{\star}({\pmb s})
=\sum_{{\pmb t} \,\preceq\, {\pmb s}}\zeta({\pmb t}), \quad 
 \zeta({\pmb s})
=\sum_{{\pmb t} \,\preceq\, {\pmb s}}(-1)^{n-\ell({\pmb t})}\zeta^{\star}({\pmb t}),
\end{equation}
 where, for ${\pmb t}=(t_1,\ldots,t_m)\in\mathbb{C}^m$, 
 $\ell({\pmb t})=m$ and
 ${\pmb t} \preceq {\pmb s}$ means that 
 ${\pmb t}$ is obtained from ${\pmb s}$ by combining some of its adjacent parts. 
 The special value of $\zeta(s_1,\ldots,s_n)$ and $\zeta^{\star}(s_1,\ldots,s_n)$ at positive integers
 were first introduced by Euler \cite{E} for $n=2$, and by Hoffman \cite{H1} and Zagier \cite{Za} for general $n$, independently.
 Many different types of relations among such values have been studied in references such as \cite{Zl,M,IKOO,OZ}. 

 The purpose of the present paper is
 to introduce a generalization of both MZF and MZSF,
 which we call a {\it Schur multiple zeta function},
 from the viewpoint of $n$-ple zeta functions.
 Indeed, it is defined similarly to the tableau expression of the Schur function as follows. 
 For a partition $\lambda$ of a positive integer $n$,
 let $T(\lambda,X)$ be the set of all Young tableaux of shape $\lambda$ over a set $X$ and,
 in particular, $\mathrm{SSYT}(\lambda)\subset T(\lambda,\mathbb{N})$ the set of all semi-standard Young tableaux of shape $\lambda$
 (see Section~\ref{sec:SMZ} for precise definitions).
 Recall that $M=(m_{ij})\in T(\lambda,\mathbb{N})$ is called semi-standard
 if $m_{i1}\le m_{i2}\le \cdots$ for all $i$ and $m_{1j}<m_{2j}<\cdots $ for all $j$.
 For ${\pmb s}=(s_{ij})\in T(\lambda,\mathbb{C})$,
 the Schur multiple zeta function (SMZF for short) associated with $\lambda$ is defined by the series 
\[
 \zeta_{\lambda}({\pmb s})
=\sum_{M\in \mathrm{SSYT}(\lambda)}\frac{1}{M^{\pmb s}}, 
\]
 where $M^{\pmb s}=\prod_{(i,j)\in D(\lambda)}m_{ij}^{s_{ij}}$ for $M=(m_{ij})\in \mathrm{SSYT}(\lambda)$
 with $D(\lambda)$ being the Young diagram of $\lambda$.
 It is shown in Lemma~\ref{lem:convergence} that
 the above series converges absolutely whenever ${\pmb s}\in W_{\lambda}$ where 
\[
 W_\lambda
=
\left\{{\pmb s}=(s_{ij})\in T(\lambda,\mathbb{C})\,\left|\,
\begin{array}{l}
 \text{$\Re(s_{ij})\ge 1$ for all $(i,j)\in D(\lambda) \setminus C(\lambda)$ } \\[3pt]
 \text{$\Re(s_{ij})>1$ for all $(i,j)\in C(\lambda)$}
\end{array}
\right.
\right\}
\]
 with $C(\lambda)$ being the set of all corners of $\lambda$.
 If $(1^n)$ and $(n)$ are denoted by the one column and one row partitions of $n$, 
 then it is clear that $\zeta_{(1^n)}({\pmb s})$ (${\pmb s} \in T((1^n),\mathbb{C})$) and 
 $\zeta_{(n)}({\pmb s})$ (${\pmb s} \in T((n),\mathbb{C})$) are nothing but MZF and MZSF, respectively.
 This shows that SMZFs actually interpolate both MZFs and MZSFs combinatorially.
 We remark that such interpolation multiple zeta functions were first mentioned in \cite{Y}
 from the study of the multiple Dirichlet $L$-values.
 
 In this paper, we study fundamental properties of SMZFs and establish some relations among them,
 which can be regarded as analogues of those for Schur functions.
 Indeed, we obtain the following Jacobi-Trudi formulas for SMZFs,
 which is one of the main results of our paper. 
 To describe the result, we need the set 
\[
 W^{\mathrm{diag}}_{\lambda}
=W_\lambda\cap T^{\mathrm{diag}}(\lambda,\mathbb{C}),
\]
 where, for a set $X$, $T^{\mathrm{diag}}(\lambda,X)=\{T=(t_{ij})\in T(\lambda,X)\,|\,\text{$t_{ij}=t_{kl}$ if $j-i=l-k$}\}$.
 For a tableau ${\pmb s}=(s_{ij})\in W^{\mathrm{diag}}_{\lambda}$, 
 we always write $a_k=s_{i,i+k}$ for $k\in\mathbb{Z}$ (and for any $i\in\mathbb{N}$). 
 For example, when $\lambda=(4,3,3,2)$,
 ${\pmb s}=(s_{ij})\in W^{\mathrm{diag}}_{(4,3,3,2)}$ implies that ${\pmb s}$ is of the form of  
\[
 {\pmb s}
=\,
\ytableausetup{boxsize=18pt,aligntableaux=center}
\begin{ytableau}
 s_{11} & s_{12} & s_{13} & s_{14} \\
 s_{21} & s_{22} & s_{23} \\
 s_{31} & s_{32} & s_{33} \\
 s_{41} & s_{42} 
\end{ytableau} 
\,
=
\,
\begin{ytableau}
 a_0    & a_1 & a_2 & a_3 \\
 a_{-1} & a_0 & a_1 \\
 a_{-2} & a_{-1} & a_0 \\
 a_{-3} & a_{-2} 
\end{ytableau} 
.
\]

\begin{thm}
\label{JT}
 Let $\lambda=(\lambda_1,\ldots,\lambda_r)$ be a partition and $\lambda'=(\lambda'_1,\ldots,\lambda'_s)$ the conjugate of $\lambda$. 
 Assume that ${\pmb s}=(s_{ij})\in W^{\mathrm{diag}}_{\lambda}$. 
\begin{itemize}
\item[$(1)$] 
 Assume further that $\Re(s_{i,\lambda_i})>1$ for all $1\le i \le r$.
 Then, we have
\begin{equation}
\label{Htype}
 \zeta_{\lambda}({\pmb s})
=\det\left[\zeta^{\star}(a_{-j+1},a_{-j+2},\ldots,a_{-j+(\lambda_{i}-i+j)})\right]_{1\le i,j\le r\,}.
\end{equation}
 Here, we understand that $\zeta^{\star}(\,\cdots)=1$ if $\lambda_{i}-i+j=0$ and $0$ if $\lambda_{i}-i+j<0$.
\item[$(2)$]
 Assume further that $\Re(s_{\lambda'_i,i})>1$ for all $1\le i \le s$.
 Then, we have
\begin{equation}
\label{Etype}
 \zeta_{\lambda}({\pmb s})
=\det\left[\zeta(a_{j-1},a_{j-2},\ldots,a_{j-(\lambda'_{i}-i+j)})\right]_{1\le i,j\le s}.
\end{equation}
 Here, we understand that $\zeta(\,\cdots)=1$ if $\lambda'_{i}-i+j=0$ and $0$ if $\lambda'_{i}-i+j<0$.
\end{itemize}
\end{thm}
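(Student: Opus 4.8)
The plan is to recognize the right-hand side of \eqref{Htype} as a Schur function in the sense of Macdonald's ninth variation, attached to a suitable doubly-indexed family of ``windowed'' multiple zeta values, and then to harvest the Jacobi--Trudi, dual Jacobi--Trudi and semistandard-tableau expansions available in that framework. Concretely, for $c\in\mathbb{Z}$ and $r\ge 0$ I would set
\[
 h^{(c)}_r:=\zeta^{\star}(a_{c+1},\dots,a_{c+r})=\sum_{1\le m_1\le\cdots\le m_r}\prod_{t=1}^{r}m_t^{-a_{c+t}},\qquad
 e^{(c)}_r:=\zeta(a_{c},\dots,a_{c-r+1})=\sum_{1\le m_1<\cdots<m_r}\prod_{t=1}^{r}m_t^{-a_{c-t+1}},
\]
with the conventions $h^{(c)}_0=e^{(c)}_0=1$ and $h^{(c)}_r=e^{(c)}_r=0$ for $r<0$. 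One checks that the $(i,j)$-entry of the determinant in \eqref{Htype} is exactly $h^{(-j)}_{\lambda_i-i+j}$ and that of \eqref{Etype} is exactly $e^{(j-1)}_{\lambda'_i-i+j}$, the degenerate values $1$ and $0$ being the correct ninth-variation conventions. Two points deserve flagging. First, the mere fact that these entries can be written through a single sequence $(a_k)$ is exactly the content of the hypothesis ${\pmb s}\in W^{\mathrm{diag}}_{\lambda}$. Second, the extra assumption $\Re(s_{i,\lambda_i})>1$ for all $i$ (resp. $\Re(s_{\lambda'_i,i})>1$ for all $i$) is precisely what makes every $h^{(c)}_r$ (resp. $e^{(c)}_r$) occurring in \eqref{Htype} (resp. \eqref{Etype}) an absolutely convergent series: the last argument of each entry in the $i$-th row of \eqref{Htype} is $a_{\lambda_i-i}=s_{i,\lambda_i}$, while all other arguments have real part $\ge 1$ by ${\pmb s}\in W_\lambda$; this is genuinely stronger than ${\pmb s}\in W_\lambda$ because the rightmost cell of a row of $\lambda$ need not be a corner of $\lambda$.

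With the setup in place I would prove \eqref{Htype} in three steps. First, reduce to a finite statement: writing $\zeta_\lambda({\pmb s})[N]$, $h^{(c)}_r[N]$ for the same sums with all entries $\le N$, absolute convergence (together with Lemma~\ref{lem:convergence}) lets one replace \eqref{Htype} by the identity $\zeta_\lambda({\pmb s})[N]=\det\bigl[h^{(-j)}_{\lambda_i-i+j}[N]\bigr]_{1\le i,j\le r}$ and then let $N\to\infty$. Second, prove this finite identity by the Lindstr\"om--Gessel--Viennot lemma on the planar network on the strip $1\le y\le N$ with sources $A_j=(-j,1)$ and sinks $B_i=(\lambda_i-i,N)$, in which the east step from $(c,y)$ to $(c+1,y)$ is given the weight $y^{-a_{c+1}}$; the decisive point is that, because ${\pmb s}$ is constant along diagonals, this weight is a function of the edge alone, independent of the path. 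One then verifies that (a) the path generating function from $A_j$ to $B_i$ equals $h^{(-j)}_{\lambda_i-i+j}[N]$ (including the cases $\lambda_i-i+j=0$, giving the single vertical path of weight $1$, and $\lambda_i-i+j<0$, giving $0$); (b) since the abscissas of the $A_j$ and of the $B_i$ are both strictly decreasing, every nonintersecting family must join $A_i$ to $B_i$, so the alternating sum collapses and the determinant equals the generating function of nonintersecting families; (c) nonintersecting families correspond bijectively to semistandard tableaux $M$ of shape $\lambda$ with entries $\le N$, the east step of the $i$-th path at abscissa $c$ matching the cell of content $c$ in row $i$, so that the product of path weights is exactly $M^{-{\pmb s}}$. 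Summing over tableaux gives the finite identity and hence \eqref{Htype}. Third, \eqref{Etype} follows by the same argument read along columns (strictly increasing entries, $e^{(c)}_r$ in place of $h^{(c)}_r$, $\lambda'$ in place of $\lambda$), or, more in the spirit of the ninth variation, from \eqref{Htype} and the dual Jacobi--Trudi relation, which in turn reduces to the shifted orthogonality $\sum_{k=0}^{r}(-1)^k e^{(c+r)}_k h^{(c)}_{r-k}=\delta_{r,0}$ for $r\ge 1$ --- the generating-function form of the classical relation \eqref{for:zsz} between $\zeta$ and $\zeta^{\star}$ --- provable by a one-line sign-reversing involution on pairs of monotone sequences that moves the overall largest part between the two factors.

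The conceptual content is modest: it is the classical Lindstr\"om--Gessel--Viennot proof of Jacobi--Trudi, organized through the ninth variation so that the companion determinant identities (Giambelli, dual Cauchy) come out of the same machine. The real work, and the main obstacle, is bookkeeping of two flavours. The notational one is to line up the content labelling $a_k=s_{i,i+k}$ with the indices of the $h^{(c)}_r$ and $e^{(c)}_r$ and to check that the degenerate determinant entries are assigned the values $1$ and $0$ consistently on both sides. The analytic one, which is where care is actually needed, is to justify the truncation-and-limit argument and, above all, to see why the hypotheses $\Re(s_{i,\lambda_i})>1$ and $\Re(s_{\lambda'_i,i})>1$ are exactly right: the determinant expansion effectively slices ${\pmb s}$ into rows (resp.\ columns), so each slice must itself be a legitimate, absolutely convergent multiple zeta-star (resp.\ multiple zeta) value, which forces the rightmost cell of every row (resp.\ the bottom cell of every column) to carry an exponent of real part $>1$.
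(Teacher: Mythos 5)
Your proposal is correct and follows essentially the same route as the paper: both prove the truncated identity $\zeta^{N}_{\lambda}({\pmb s})=\det\bigl[\zeta^{N\star}(\cdots)\bigr]$ by the Lindstr\"om--Gessel--Viennot argument on lattice paths in the strip of height $N$ (the paper encodes the edge weights via $H$-rim decompositions and spells out the sign-reversing tail-swapping involution, noting that weight-preservation under the swap is exactly where the hypothesis ${\pmb s}\in T^{\mathrm{diag}}(\lambda,\mathbb{C})$ enters, which is your ``weight is a function of the edge alone'' observation) and then let $N\to\infty$, with the $E$-type case handled by the same argument read along columns. Your account of why the extra assumptions $\Re(s_{i,\lambda_i})>1$ and $\Re(s_{\lambda'_i,i})>1$ are needed for the entries of the determinants to converge also matches the paper.
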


 As in the case of Schur functions, 
 we call \eqref{Htype} and \eqref{Etype} of $H$-type and $E$-type, respectively.
 From these formulas, as corollaries,
 one can obtain many algebraic relations given by determinants among MZFs and MZSFs.
 For example, considering the case $\lambda=(1^n)$ and $\lambda=(n)$,
 we have the following identities.
 
\begin{cor} 
\label{cor:detformulas}
 For $s_1,\ldots,s_{n}\in\mathbb{C}$ with $\Re(s_1),\ldots,\Re(s_n)>1$,
 we have 
\begin{align*}
 \zeta(s_1,\ldots,s_n)
&=
\left|
\begin{array}{cccccc}
 \zeta^{\star}(s_1) & \zeta^{\star}(s_2,s_1) & & \cdots & \cdots & \zeta^{\star}(s_n,\ldots,s_2,s_1) \\
 1 & \zeta^{\star}(s_2) & & \cdots & \cdots & \zeta^{\star}(s_n,\ldots,s_2) \\
 & 1 & \ddots & & & \vdots \\
 & & \ddots & 1& \zeta^{\star}(s_{n-1}) & \zeta^{\star}(s_{n},s_{n-1}) \\
 \scalebox{2}{$0$} & & & & 1 & \zeta^{\star}(s_{n})
\end{array}
\right|,\\[5pt]
 \zeta^{\star}(s_1,\ldots,s_n)
&=
\left|
\begin{array}{cccccc}
 \zeta(s_1) & \zeta(s_2,s_1) & & \cdots & \cdots & \zeta(s_n,\ldots,s_2,s_1) \\
 1 & \zeta(s_2) & & \cdots & \cdots & \zeta(s_n,\ldots,s_2) \\
  & 1 & \ddots & & & \vdots \\
 & & \ddots & 1& \zeta(s_{n-1}) & \zeta(s_{n},s_{n-1}) \\
 \scalebox{2}{$0$} & & & & 1 & \zeta(s_{n})
\end{array}
\right|.
\end{align*}
\end{cor}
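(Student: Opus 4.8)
The plan is to specialize Theorem~\ref{JT} to the two extreme partitions $\lambda=(1^n)$ and $\lambda=(n)$, for which $\zeta_\lambda$ is a MZF and a MZSF respectively, and then simply to read off the entries of the resulting determinants. In both cases the shape is a single line, so the diagonal condition defining $T^{\mathrm{diag}}(\lambda,\mathbb{C})$ (i.e. $t_{ij}=t_{kl}$ whenever $j-i=l-k$) only relates a box to itself and is therefore automatic; thus $W^{\mathrm{diag}}_{\lambda}=W_{\lambda}$ here.

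For the first identity I would take $\lambda=(1^n)$, so $\zeta_{\lambda}(\pmb s)=\zeta(s_1,\dots,s_n)$ with $s_i:=s_{i1}$, and apply Theorem~\ref{JT}(1) (whose hypotheses depend only on $\lambda$ itself). The unique corner of $(1^n)$ is $(n,1)$, so the requirement $\pmb s\in W^{\mathrm{diag}}_{(1^n)}$ together with the extra condition $\Re(s_{i,\lambda_i})=\Re(s_i)>1$ of part~(1) is exactly the hypothesis $\Re(s_1),\dots,\Re(s_n)>1$ of the corollary. With the notation $a_k=s_{i,i+k}$ one has $a_{1-i}=s_{i1}=s_i$, i.e. $a_0=s_1,\ a_{-1}=s_2,\ \dots,\ a_{1-n}=s_n$. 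Substituting $\lambda_i=1$ into \eqref{Htype}, the $(i,j)$ entry becomes $\zeta^{\star}(a_{-j+1},\dots,a_{1-i})$, a string of $\lambda_i-i+j=j-i+1$ arguments. Hence, using the stated conventions, this entry is $0$ for $j<i-1$, equals $1$ for $j=i-1$ (empty string), equals $\zeta^{\star}(a_{1-i})=\zeta^{\star}(s_i)$ for $j=i$, and for $j>i$, rewriting $a_{-j+1}=s_j,\ a_{-j+2}=s_{j-1},\ \dots$, equals $\zeta^{\star}(s_j,s_{j-1},\dots,s_i)$. This is precisely the first displayed determinant.

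For the second identity I would take $\lambda=(n)$, so $\zeta_{\lambda}(\pmb s)=\zeta^{\star}(s_1,\dots,s_n)$ with $s_j:=s_{1j}$ and $\lambda'=(1^n)$, and apply Theorem~\ref{JT}(2). The unique corner of $(n)$ is $(1,n)$, and the extra condition $\Re(s_{\lambda'_i,i})=\Re(s_i)>1$ of part~(2) combined with $W_{(n)}$ again amounts to $\Re(s_1),\dots,\Re(s_n)>1$. Here $a_k=s_{1,1+k}=s_{k+1}$, so $a_0=s_1,\dots,a_{n-1}=s_n$. Substituting $\lambda'_i=1$ into \eqref{Etype}, the $(i,j)$ entry is $\zeta(a_{j-1},\dots,a_{i-1})$, a string of $\lambda'_i-i+j=j-i+1$ arguments; thus it is $0$ for $j<i-1$, equals $1$ for $j=i-1$, equals $\zeta(s_i)$ for $j=i$, and equals $\zeta(s_j,s_{j-1},\dots,s_i)$ for $j>i$. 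This yields the second determinant, completing the proof.

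The argument is purely a matter of bookkeeping; the only point deserving care is the consistent translation between the diagonal variables $a_k$ and the original variables $s_i$ (the column case $(1^n)$ reverses the indexing, while the row case $(n)$ does not) together with the correct use of the degeneracy conventions for $\zeta$ and $\zeta^{\star}$ (value $1$ on the empty string, value $0$ when the length would be negative), which is exactly what produces the sub-diagonal $1$'s and the vanishing below them.
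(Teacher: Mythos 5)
Your proposal is correct and is exactly the paper's intended derivation: the corollary is stated in the paper as an immediate specialization of Theorem~\ref{JT} to $\lambda=(1^n)$ and $\lambda=(n)$, and your index bookkeeping (the identification $a_{1-i}=s_i$ in the column case, $a_{k}=s_{k+1}$ in the row case, the verification that the corner conditions plus the extra hypotheses of Theorem~\ref{JT} reduce to $\Re(s_i)>1$ for all $i$, and the degeneracy conventions producing the sub-diagonal $1$'s and the $0$'s below) is accurate.
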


 Moreover,
 just combining \eqref{Htype} and \eqref{Etype},
 we obtain a family of relations among MZFs and MZSFs.
 For example, considering the cases $\lambda=(2,2,1)$ and its conjugate $\lambda'=(3,2)$,
 we have 
\begin{align*}
\ytableausetup{boxsize=normal,aligntableaux=center}
 \zeta_{\lambda}
\left(~
\begin{ytableau}
 a & b \\
 c & a \\
 d  
\end{ytableau} 
~\right)
&= 
\left|
\begin{array}{ccc}
 \zeta^{\star}(a,b) & \zeta^{\star}(c,a,b) & \zeta^{\star}(d,c,a,b) \\
 \zeta^{\star}(a) & \zeta^{\star}(c,a) & \zeta^{\star}(d,c,a) \\
 0 & 1 & \zeta^{\star}(d)
\end{array}
\right|
= 
\left|
\begin{array}{cc}
 \zeta(a,c,d) & \zeta(b,a,c,d)\\
 \zeta(a) & \zeta(b,a)
\end{array}
\right|
,\\
 \zeta_{\lambda'}
\left(~
\begin{ytableau}
 a & c & d \\
 b & a  
\end{ytableau} 
~\right)
&= 
\left|
\begin{array}{cc}
 \zeta^{\star}(a,c,d) & \zeta^{\star}(b,a,c,d)\\
 \zeta^{\star}(a) & \zeta^{\star}(b,a)
\end{array}
\right|
=
\left|
\begin{array}{ccc}
 \zeta(a,b) & \zeta(c,a,b) & \zeta(d,c,a,b) \\
 \zeta(a) & \zeta(c,a) & \zeta(d,c,a) \\
 0 & 1 & \zeta(d)
\end{array}
\right|
,
\end{align*}
 where $a,b,c,d\in\mathbb{C}$ with $\Re(a),\Re(b),\Re(d)>1$ and $\Re(c)\ge 1$.
 As you can see in the above examples and Corollary~\ref{cor:detformulas},
 these kind of relations hold even if we replace $\zeta$ with $\zeta^{\star}$ and vice versa.
 
 It is also worth mentioning that both \eqref{Htype} and \eqref{Etype} give meromorphic continuations of $\zeta_{\lambda}({\pmb s})$
 to $T^{\mathrm{diag}}(\lambda,\mathbb{C})$ ($\,=\mathbb{C}^{s+r-1}$ where $s=\lambda_1$ and $r=\lambda'_1$) 
 as a function of $a_k$ for $1-r\le k\le 1+s$
 because both MZFs and MZSFs admit meromorphic continuations to the whole complex space (see, e.g., \cite{AET}).
 
 The assumption on variables on the same diagonal lines is crucial.
 Actually, in Section~\ref{sec:Macdonald},
 we find out that our SMZF, which can be easily generalized to the skew type,
 with the assumption is realized
 as (the limit of) a specialization of Macdonald's ninth variation of Schur function studied by Nakagawa, Noumi, Shirakawa and Yamada \cite{NNSY}.
 Based on this fact, we present some results such as the Jacobi-Trudi formula of skew type,
 the Giambelli formula and the dual Cauchy formula for SMZFs.
 Notice that if we work for such formulas without the assumption,
 then we encounter extra terms (see Remark~\ref{rmk:ErrorTerms}),
 which will be clarified in a forthcoming work. 
 
 Furthermore, in Section~\ref{sec:Sqsf},
 we study SMZFs in a more general framework, that is,
 in the Hopf algebra $\mathrm{QSym}$ of quasi-symmetric functions studied by Gessel \cite{G}.
 For a skew Young diagram $\nu$,
 we define a special type of quasi-symmetric function $S_{\nu}({\pmb \alpha})$,
 which we call a {\it Schur type quasi-symmetric function}, similarly to SMZFs.
 (Note that there is a different type of quasi-symmetric functions,
 called {\it quasi-symmetric Schur functions} defined by Haglund, Mason, Luoto and Willigenburg \cite{HLMW}, 
 as a basis of $\mathrm{QSym}$,
 which arise from the combinatorics of Macdonald polynomials and actually refine Schur functions in a natural way.)
 Then, we also prove the Jacobi-Trudi formulas of both $H$-type and $E$-type for such quasi-symmetric functions under the same assumption as above. 
 Notice that the former corresponds to \eqref{Htype} with entries in the essential quasi-symmetric functions
 and the latter to \eqref{Etype} with in the monomial quasi-symmetric functions. 
 Remark that when $\nu$ is the one column and one row partitions, 
 the corresponding formulas can be also respectively obtained
 by calculating the images of the essential and monomial quasi-symmetric functions
 by the antipode $S$ of $\mathrm{QSym}$ in two different ways,  
 as shown by Hoffman (\cite[Theorem~3.1]{H2}).
 More generally, for any skew Young diagram $\nu$,
 we calculate the image of $S_{\nu}({\pmb \alpha})$ by $S$ and see that 
 it is essentially equal to the Schur type quasi-symmetric function again associated with $\nu^{\#}$,
 the anti-diagonal transpose of $\nu$.
 
 In the final section,
 we give iterated integral representations of Schur multiple zeta values of ribbon type
 by following the similar discussion performed in \cite{KanekoYamamoto,Yamamoto}.
 As is the case of the multiple zeta values,
 one can obtain a duality for Schur multiple zeta values 
 by just making a change of variables in the integral representation 
 if the dual of the value is again of ribbon type.
 

\section{Schur multiple zeta functions}
\label{sec:SMZ}

\subsection{Combinatorial settings}

 We first set up some notions of partitions.
 A partition $\lambda=(\lambda_1,\ldots,\lambda_r)$ of a positive integer $n$
 is a non-increasing sequence of positive integers such that $|\lambda|=\sum^{r}_{i=1}\lambda_i=n$.
 We call $|\lambda|$ and $\ell(\lambda)=r$ the weight and length of $\lambda$, respectively.
 If $|\lambda|=n$, then we write $\lambda\vdash n$.
 We sometimes express $\lambda\vdash n$ as $\lambda=(n^{m_n(\lambda)}\cdots 2^{m_2(\lambda)}1^{m_1(\lambda)})$
 where $m_i(\lambda)$ is the multiplicity of $i$ in $\lambda$.
 We identify $\lambda\vdash n$ with its Young diagram $D(\lambda)=\{(i,j)\in\mathbb{Z}^2\,|\,1\le i\le r,\ 1\le j\le \lambda_i\}$,
 depicted as a collection of $n$ square boxes with $\lambda_i$ boxes in the $i$th row.
 We say that $(i,j)\in D(\lambda)$ is a corner of $\lambda$ if $(i+1, j)\notin D(\lambda)$ and $(i, j+1)\notin D(\lambda)$
 and denote by $C(\lambda) \subset D(\lambda)$ the set of all corners of $\lambda$.
 For example, $C((4,3,3,2))=\{(1,4),(3,3),(4,2)\}$.
 The conjugate $\lambda'=(\lambda'_1,\ldots,\lambda'_s)$ of $\lambda$ is defined by $\lambda'_i=\#\{j\,|\,\lambda_j\ge i\}$.
 Namely, $\lambda'$ is the partition whose Young diagram is the transpose of that of $\lambda$.
 For example, $(4,3,3,2)'=(4,4,3,1)$.

 Let $X$ be a set.
 For a partition $\lambda$, a Young tableau $T=(t_{ij})$ of shape $\lambda$ over $X$ is a filling of $D(\lambda)$ obtained by putting $t_{ij}\in X$ into $(i,j)$ box of $D(\lambda)$.
 Similarly to the above,
 the conjugate tableau of $T$ is defined by $T'=(t_{ji})$ whose shape is $\lambda'$.
 We denote by $T(\lambda,X)$ the set of all Young tableaux of shape $\lambda$ over $X$,
 which is sometimes identified with $X^{|\lambda|}$.
 Moreover, we put
\[
 T^{\mathrm{diag}}(\lambda,X)
=\left\{\left.(t_{ij})\in T(\lambda,X)\,\right|\,\text{$t_{ij}=t_{kl}$ if $j-i=l-k$}\right\},
\]
 which is identified with $X^{\lambda_1+\ell(\lambda)-1}$. 
 By a semi-standard Young tableau,
 we mean a Young tableau over the set of positive integers $\mathbb{N}$
 such that the entries in each row are weakly increasing from left to right and those in each column are strictly increasing from top to bottom.
 We denote by $\mathrm{SSYT}(\lambda)$ the set of all semi-standard Young tableaux of shape $\lambda$.

\subsection{Definition of Schur multiple zeta functions}

 For ${\pmb s}=(s_{ij})\in T(\lambda,\mathbb{C})$, 
 define
\begin{equation}
\label{def:SMZ}
 \zeta_{\lambda}({\pmb s})
=\sum_{M\in\mathrm{SSYT}(\lambda)}\frac{1}{M^{\pmb s}},
\end{equation}
 where $M^{\pmb s}=\prod_{(i,j)\in D(\lambda)}m_{ij}^{s_{ij}}$ for $M=(m_{ij})\in\mathrm{SSYT}(\lambda)$.
 We also define $\zeta_{\lambda}=1$ for the empty partition $\lambda=\emptyset$.
 We call $\zeta_{\lambda}({\pmb s})$ a {\it{Schur multiple zeta function}} (SMZF for short) associated with $\lambda$
 and sometimes write it shortly as ${\pmb s}$ if there is no confusion.
 Clearly, this is an extension of both MZFs and MZSFs.
 Actually, one sees that 
\[
\ytableausetup{boxsize=normal,aligntableaux=center}
 \zeta(s_1,\ldots,s_n)
=\zeta_{(1^n)}
\left(~ 
\begin{ytableau}
 s_1 \\
 \lower2pt\vdots \\
 s_{n}
\end{ytableau} 
~\right)
=
\,
\begin{ytableau}
 s_1 \\
 \lower2pt\vdots \\
 s_{n}
\end{ytableau} 
\,, \quad
 \zeta^{\star}(s_1,\ldots,s_n)
=\zeta_{(n)}
\left(~
\begin{ytableau}
 s_1 & \cdots & s_{n}
\end{ytableau} 
~\right) 
=
\,
\begin{ytableau}
 s_1 & \cdots & s_{n}
\end{ytableau} 
\,.
\]
 We first discuss a region where the series \eqref{def:SMZ} is absolutely convergent.

\begin{lemma}
\label{lem:convergence}
 Let
\[
 W_{\lambda}
=
\left\{{\pmb s}=(s_{ij})\in T(\lambda,\mathbb{C})\,\left|\,
\begin{array}{l}
 \text{$\Re(s_{ij})\ge 1$ for all $(i,j)\in D(\lambda) \setminus C(\lambda)$} \\[3pt]
 \text{$\Re(s_{ij})>1$ for all $(i,j)\in C(\lambda)$}
\end{array}
\right.
\right\}.
\]
 Then, the series \eqref{def:SMZ} converges absolutely if ${\pmb s}\in W_\lambda$.
\end{lemma}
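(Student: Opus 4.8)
The plan is to reduce the absolute convergence of $\zeta_\lambda({\pmb s})$ to the already-understood convergence of the multiple zeta function $\zeta$ (the one-column case). First I would note that it suffices to treat the case where every $s_{ij}$ is real; indeed $|M^{\pmb s}| = \prod_{(i,j)} m_{ij}^{\Re(s_{ij})}$, so replacing each $s_{ij}$ by $\Re(s_{ij})$ turns the series $\sum |1/M^{\pmb s}|$ into a genuine positive series $\zeta_\lambda((\Re(s_{ij})))$, and ${\pmb s}\in W_\lambda$ forces $(\Re(s_{ij}))\in W_\lambda$. Moreover, since the summands are positive and monotone in each exponent, it is enough to prove convergence at the "boundary" point where $s_{ij}=1$ for all non-corner boxes and $s_{ij}=1+\varepsilon$ for all corner boxes, for an arbitrary fixed $\varepsilon>0$; any ${\pmb s}\in W_\lambda$ is dominated termwise by such a tableau (after possibly enlarging $\varepsilon$ or just bounding $m_{ij}^{-\Re(s_{ij})}\le m_{ij}^{-1}$ off the corners).

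The key combinatorial step is a column-by-column domination. Given $M=(m_{ij})\in\mathrm{SSYT}(\lambda)$, the strict increase down each column means that in column $j$ the entries $m_{1j}<m_{2j}<\cdots<m_{\lambda'_j,j}$ form precisely the data of a one-column semistandard tableau, i.e. a term of the MZF $\zeta$ of depth $\lambda'_j$. The rows being weakly increasing only adds constraints between columns, so dropping them we get
\[
 \sum_{M\in\mathrm{SSYT}(\lambda)}\frac{1}{|M^{\pmb s}|}
 \le \prod_{j=1}^{\lambda_1}\ \sum_{1\le m_{1j}<\cdots<m_{\lambda'_j,j}}\ \prod_{i=1}^{\lambda'_j}\frac{1}{m_{ij}^{\Re(s_{ij})}}.
\]
Each factor on the right is a value $\zeta(\Re(s_{1j}),\ldots,\Re(s_{\lambda'_j,j}))$ of the ordinary MZF. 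Now I invoke the classical fact (cited in the text, e.g. \cite{Mat}) that $\zeta(u_1,\ldots,u_k)$ converges absolutely when $\Re(u_1),\ldots,\Re(u_{k-1})\ge 1$ and $\Re(u_k)>1$. The bottom box $(\lambda'_j,j)$ of column $j$ is exactly a corner of $\lambda$ (nothing below it, nothing to its right among the lower rows) — more precisely it is a corner when $\lambda'_j>\lambda'_{j+1}$, and otherwise the box directly below the column exists; in the latter case one instead bounds the last exponent trivially by $1$ and absorbs the convergence into the factor coming from the column that does end in a corner. Handling this bookkeeping cleanly is the one place that needs care, but it is routine: one can, for instance, group the columns according to which corner sits at the bottom of the "staircase run" they belong to, so that within each group only the single column ending at a genuine corner needs an exponent $>1$ while all the others are finite sums or converge by the same MZF criterion with a strict inequality somewhere.

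The main obstacle, then, is not analytic but purely organizational: verifying that the corner condition in $W_\lambda$ is exactly strong enough to make each of the $\zeta$-factors (after the regrouping) satisfy the MZF convergence hypothesis, i.e. that every column-product either terminates with an exponent having real part $>1$ or can be rewritten so that it does. Once this matching is checked, the lemma follows immediately: a product of finitely many absolutely convergent series is absolutely convergent, so the right-hand side above is finite, hence so is the left-hand side, and therefore \eqref{def:SMZ} converges absolutely on $W_\lambda$.
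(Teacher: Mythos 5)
Your reduction to real exponents and to the extremal tableau ($\Re(s_{ij})=1$ off corners, $1+\varepsilon$ at corners) is fine, and the inequality obtained by dropping the row constraints is a valid upper bound. The genuine gap is what happens to that upper bound afterwards: once you have factored the sum into independent column sums, each factor for a column $j$ with $\lambda'_{j+1}=\lambda'_j$ is $\zeta(1,\dots,1)=+\infty$, because the bottom box of such a column is \emph{not} a corner and $W_\lambda$ gives you only $\Re(s_{ij})\ge 1$ there. You cannot ``absorb the convergence into the factor coming from the column that does end in a genuine corner'': after factorization the columns no longer talk to each other, and a product with a divergent factor is divergent. Your fallback claim -- that after regrouping, the non-corner columns ``are finite sums or converge by the same MZF criterion with a strict inequality somewhere'' -- is false in the simplest case: for $\lambda=(2,2)$ with all exponents $1$ except $s_{22}=1+\varepsilon$, column $1$ carries exponents $(1,1)$ exactly, is an infinite sum, and diverges on its own. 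The only thing that saves convergence is the row constraint forcing every entry of the block to be at most the corner entry $N=m_{22}$, so that the non-corner boxes contribute only partial harmonic sums $\sum_{m\le N}1/m\ll_{\varepsilon} N^{\varepsilon}$, which the exponent $1+\delta$ at the corner can absorb. That estimate (or something equivalent) is unavoidable and is missing from your argument.

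This is in fact exactly how the paper proceeds: it groups the rows into maximal rectangular bands, one per corner of $\lambda$, drops only the constraints \emph{between} bands, and inside each rectangle $b^a$ with corner entry $N$ bounds the sum over all other boxes by $\bigl(\sum_{m\le N}1/m\bigr)^{ab-1}< C_\varepsilon^{ab-1}N^{\varepsilon(ab-1)}$, reducing everything to $\zeta\bigl(\Re(s_{\text{corner}})-\varepsilon(ab-1)\bigr)<\infty$ for $\varepsilon$ small. Your instinct to group columns ``according to which corner sits at the bottom of their staircase run'' is the right partition (it produces the same rectangles, transposed), but to finish you must keep the intra-block ordering and supply the $N^{\varepsilon}$ bound on partial harmonic sums; the classical MZF convergence criterion applied column by column cannot do the job.
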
 
\begin{proof}
 Write $C(\lambda)=\{(i_1,j_1),\ldots,(i_k,j_k)\}$ where $i_1<\cdots<i_k$ and $j_1>\cdots>j_k$.
 Then, it can be written as $\lambda=(j_1^{i'_1} j_2^{i'_2}\cdots j_k^{i'_k})$
 where $i'_l=i_l-i_{l-1}$ with $i_0=0$.
 Since $\Re(s_{ij})\ge 1$ for $(i,j) \in D(\lambda) \setminus C(\lambda)$,
 we have 
\begin{align*}
 \sum_{M \in\mathrm{SSYT}(\lambda)}\left|\frac{1}{M^{\pmb s}}\right|
&\le
\prod^{k}_{l=1}\sum_{(m_{ij})\in \mathrm{SSYT}(j_l^{i'_l})}
\prod^{i_{l}}_{i=1}\prod^{j_l}_{j=1}\frac{1}{m^{\Re(s_{ij})}_{ij}}\\
&\le \prod^{k}_{l=1}\sum^{\infty}_{N_l=1}\frac{C_{i'_l,j_l}(N_l)}{N^{\Re(s_{i_l,j_l})}_{l}},
\end{align*}
 where $C_{a,b}(N)$ is a finite sum defined by 
\[
 C_{a,b}(N)
=\sum_{(m_{ij})\in \mathrm{SSYT}(b^a) \atop m_{a,b}=N}\underset{(i,j)\ne (a,b)}{\prod^{a}_{i=1}\prod^{b}_{j=1}}\frac{1}{m_{ij}}.
\] 
 It is well known that, for any $\varepsilon>0$,
 there exists a constant $C_{\varepsilon}>0$, which is not dependent on $N$,
 such that $\sum^{N}_{m=1}\frac{1}{m}<C_{\varepsilon}N^{\varepsilon}$.
 Hence
\[
 \left|C_{a,b}(N)\right|
\le \underset{(i,j)\ne (a,b)}{\prod^{a}_{i=1}\prod^{b}_{j=1}}\sum^{N}_{m_{ij}=1}\frac{1}{m_{ij}}
< C_{\varepsilon}^{ab-1}N^{\varepsilon(ab-1)}
\] 
 and therefore
\begin{align*}
 \sum_{M \in\mathrm{SSYT}(\lambda)}\left|\frac{1}{M^{\pmb s}}\right|
&\le \prod^{k}_{l=1}\sum^{\infty}_{N_l=1}\frac{C_{\varepsilon}^{i'_lj_l-1}N_l^{\varepsilon(i'_lj_l-1)}}{N^{\Re(s_{i_l,j_l})}_{l}}\\
&=\prod^{k}_{l=1} C_{\varepsilon}^{i'_lj_l-1}\zeta\left(\Re(s_{i_l,j_l})-\varepsilon(i'_lj_l-1)\right).
\end{align*}
 This ends the proof
 because $\Re(s_{i_l,j_l})>1$ for $1\le l\le k$ and $\varepsilon$ can be taken sufficiently small.
\end{proof}

\begin{rmk}
 The condition ${\pmb s}\in W_\lambda$ is a sufficient condition 
 that the series \eqref{def:SMZ} converges absolutely.
 It seems to be interesting to determine the region of absolute convergence of \eqref{def:SMZ} with full description.
 See e.g., \cite{Mat} for the cases of $\lambda=(1^n)$ and $(n)$, that is, the cases of MZFs and MZSFs.
\end{rmk}

 It should be noted that a SMZF can be also written as a linear combination of MZFs or MZSFs.  
 In fact, for $\lambda\vdash n$,
 let $\mathcal{F}(\lambda)$ be the set of all bijections $f:D(\lambda)\to\{1,2,\ldots,n\}$
 satisfying the following two conditions:
\begin{itemize}
\item[(i)]
 for all $i$, $f((i,j))<f((i,j'))$ if and only if $j<j'$, 
\item[(ii)]
 for all $j$, $f((i,j))<f((i',j))$ if and only if $i<i'$.
\end{itemize} 
 Moreover, for $T=(t_{ij})\in T(\lambda,X)$, 
 put
\[
 V(T)=
\left\{\left.
\left(t_{f^{-1}(1)},t_{f^{-1}(2)},\ldots,t_{f^{-1}(n)}\right)\in X^{n}\,\right|\,
f\in \mathcal{F}(\lambda)
\right\}.
\] 
 Furthermore, when $X$ has an addition $+$,
 we write ${\pmb w} \preceq T$ for ${\pmb w}=(w_1,w_2,\ldots,w_m)\in X^m$
 if there exists $(v_1,v_2,\ldots,v_{n})\in V(T)$ satisfying the following:
 for all $1\le k\le m$, there exist $1\le h_k\le m$ and $l_k\ge 0$ such that  
\begin{itemize}
\item[(i)]
 $w_k=v_{h_k}+v_{h_k+1}+\cdots +v_{h_k+l_k}$,
\item[(ii)]
 there are no $i$ and $i'$ such that $i\ne i'$ and $t_{ij},t_{i'j}\in\{v_{h_k},v_{h_k+1},\ldots ,v_{h_k+l_k}\}$ for some $j$,
\item[(iii)]
 $\bigsqcup^{m}_{k=1}\{h_k,h_k+1,\ldots,h_k+l_k\}=\{1,2,\ldots,n\}$.
\end{itemize}
 Then, by the definition, we have 
\begin{equation}
\label{for:SchurtoMZV}
 \zeta_{\lambda}({\pmb s})
=\sum_{{\pmb t} \,\preceq\, {\pmb s}}\zeta({\pmb t}).
\end{equation} 
 This clearly includes the first equation in \eqref{for:zsz} as the case $\lambda=(n)$.
 Moreover, by an inclusion-exclusion argument,
 one can also obtain its "dual" expression 
\begin{equation}
\label{for:SchurtoMZSV}
 \zeta_{\lambda}({\pmb s})
=\sum_{{\pmb t} \,\preceq\, {\pmb s}'}(-1)^{n-\ell({\pmb t})}\zeta^{\star}({\pmb t}),
\end{equation} 
 which does the second one in \eqref{for:zsz} as the case $\lambda=(1^n)$.

\begin{exam}
\begin{itemize}
\item[(1)]
 For ${\pmb s}=(s_{ij})\in T((3,1),\mathbb{C})$, we have 
\begin{align*}
 V({\pmb s})
&=\{
(s_{11},s_{12},s_{13},s_{21}),
(s_{11},s_{12},s_{21},s_{13}),
(s_{11},s_{21},s_{12},s_{13})
\}.
\end{align*}
 One sees that ${\pmb t} \preceq {\pmb s}$ if and only if ${\pmb t}$ is one of the following:
\begin{align*}
& (s_{11},s_{12},s_{13},s_{21}),(s_{11}+s_{12},s_{13},s_{21}),
 (s_{11},s_{12}+s_{13},s_{21}),(s_{11},s_{12},s_{13}+s_{21}),\\
& (s_{11}+s_{12}+s_{13},s_{21}),(s_{11}+s_{12},s_{13}+s_{21}),(s_{11},s_{12}+s_{13}+s_{21}),
 (s_{11},s_{12},s_{21},s_{13}),\\
& (s_{11}+s_{12},s_{21},s_{13}),(s_{11},s_{12}+s_{21},s_{13}),
 (s_{11},s_{21},s_{12},s_{13}),(s_{11},s_{21},s_{12}+s_{13}).
\end{align*} 
 This shows that when ${\pmb s}\in W_{(3,1)}$
\begin{align*}
\ytableausetup{boxsize=normal,aligntableaux=center}
 \ytableaushort{{s_{11}}{s_{12}}{s_{13}},{s_{21}}}\, 
&=\zeta(s_{11},s_{12},s_{13},s_{21})+\zeta(s_{11}+s_{12},s_{13},s_{21})+\zeta(s_{11},s_{12}+s_{13},s_{21})\\
&\ \ \ +\zeta(s_{11},s_{12},s_{13}+s_{21})+\zeta(s_{11}+s_{12}+s_{13},s_{21})+\zeta(s_{11}+s_{12},s_{13}+s_{21})\\
&\ \ \ +\zeta(s_{11},s_{12}+s_{13}+s_{21})+\zeta(s_{11},s_{12},s_{21},s_{13})+\zeta(s_{11}+s_{12},s_{21},s_{13})\\
&\ \ \ +\zeta(s_{11},s_{12}+s_{21},s_{13})+\zeta(s_{11},s_{21},s_{12},s_{13})+\zeta(s_{11},s_{21},s_{12}+s_{13})\\
&=\zeta^{\star}(s_{11},s_{21},s_{12},s_{13})-\zeta^{\star}(s_{11}+s_{21},s_{12},s_{13})-\zeta^{\star}(s_{11},s_{21}+s_{12},s_{13}), \\
&\ \ \ +\zeta^{\star}(s_{11},s_{12},s_{21},s_{13})-\zeta^{\star}(s_{11},s_{12},s_{21}+s_{13})+\zeta^{\star}(s_{11},s_{12},s_{13},s_{21}).
\end{align*} 
 Notice that the second equality follows from the discussion in (2).
\item[(2)]
 For ${\pmb s}=(s_{ij})\in T((2,1,1),\mathbb{C})$, we have 
\begin{align*}
 V({\pmb s})
&=\{
(s_{11},s_{12},s_{21},s_{31}),
(s_{11},s_{21},s_{12},s_{31}),
(s_{11},s_{21},s_{31},s_{12})
\}.
\end{align*}
 One sees that ${\pmb t} \preceq {\pmb s}$ if and only if ${\pmb t}$ is one of the followings:
\begin{align*}
& (s_{11},s_{12},s_{21},s_{31}),(s_{11}+s_{12},s_{21},s_{31}),(s_{11},s_{12}+s_{21},s_{31}), \\
& (s_{11},s_{21},s_{12},s_{31}),(s_{11},s_{21},s_{12}+s_{31}),(s_{11},s_{21},s_{31},s_{12}).
\end{align*} 
 This shows that when ${\pmb s}\in W_{(2,1,1)}$
\begin{align*}
\ytableausetup{boxsize=normal,aligntableaux=center}
 \ytableaushort{{s_{11}}{s_{12}},{s_{21}},{s_{31}}}\, 
&=\zeta(s_{11},s_{12},s_{21},s_{31})+\zeta(s_{11}+s_{12},s_{21},s_{31})+\zeta(s_{11},s_{12}+s_{21},s_{31}), \\
&\ \ \ +\zeta(s_{11},s_{21},s_{12},s_{31})+\zeta(s_{11},s_{21},s_{12}+s_{31})+\zeta(s_{11},s_{21},s_{31},s_{12})\\
&=\zeta^{\star}(s_{11},s_{21},s_{31},s_{12})-\zeta^{\star}(s_{11}+s_{21},s_{31},s_{12})-\zeta^{\star}(s_{11},s_{21}+s_{31},s_{12})\\
&\ \ \ -\zeta^{\star}(s_{11},s_{21},s_{31}+s_{12})+\zeta^{\star}(s_{11}+s_{21}+s_{31},s_{12})+\zeta^{\star}(s_{11}+s_{21},s_{31}+s_{12})\\
&\ \ \ +\zeta^{\star}(s_{11},s_{21}+s_{31}+s_{12})+\zeta^{\star}(s_{11},s_{21},s_{12},s_{31})-\zeta^{\star}(s_{11}+s_{21},s_{12},s_{31})\\
&\ \ \ -\zeta^{\star}(s_{11},s_{21}+s_{12},s_{31})+\zeta^{\star}(s_{11},s_{12},s_{21},s_{31})-\zeta^{\star}(s_{11},s_{12},s_{21}+s_{31}).
\end{align*} 
 Notice that the second equality follows from the discussion in (1).
\end{itemize}
\end{exam}
 
\begin{rmk}
 By the definitions,
 it is clear that if ${\pmb t}=(t_1,t_2,\ldots,t_m) \preceq {\pmb s}\in T(\lambda,\mathbb{C})$,
 then $t_m$ is expressed as a sum of $s_{ij}$ where at least one of $(i,j)$ is in $C(\lambda)$.
 This together with the expression \eqref{for:SchurtoMZV} or \eqref{for:SchurtoMZSV} also leads
 to Lemma~\ref{lem:convergence}. 
\end{rmk}

\subsection{A special case}
\label{subsec:special}

 We now consider a special case of variables; ${\pmb s}=\{s\}^{\lambda}$ ($s\in\mathbb{C}$)
 where $\{s\}^{\lambda}=(s_{ij})\in T(\lambda,\mathbb{C})$ is the tableau given by $s_{ij}=s$ for all $(i,j)\in D(\lambda)$.
 In this case, one sees that our SMZF is realized as a specialization of the Schur function.
 Actually, for variables ${\pmb x}=(x_1,x_2,\ldots)$, let
\[
 s_{\lambda}
=s_{\lambda}({\pmb x})
=\sum_{(m_{ij})\in\mathrm{SSYT}(\lambda)}\prod_{(i,j)\in D(\lambda)}x_{m_{ij}}
\]
 be the Schur function associated with $\lambda$.
 Then, for $s\in\mathbb{C}$ with $\Re(s)>1$, we have
\[
 \zeta_{\lambda}(\{s\}^{\lambda})
=e^{(s)} s_{\lambda}
=s_{\lambda}(1^{-s},2^{-s},\ldots),
\]
 where $e^{(s)}$ is the function sending $x_i$ to $\frac{1}{i^s}$. 
 This means that $\zeta_{\lambda}(\{s\}^{\lambda})$ can be written as a polynomial in $\zeta(s),\zeta(2s),\ldots$.

\begin{prop}
 Let $\lambda\vdash n$. 
 Then, for $s\in\mathbb{C}$ with $\Re(s)>1$, we have
\begin{equation}
\label{for:expnsionsbyp}
 \zeta_{\lambda}(\{s\}^{\lambda})
=\sum_{\mu\vdash n}\frac{\chi^{\lambda}(\mu)}{z_{\mu}}\prod^{\ell(\mu)}_{i=1}\zeta(\mu_i s).
\end{equation}
 Here, $z_{\mu}=\prod_{i\ge 1}i^{m_i(\mu)}m_i(\mu)!$ and $\chi^{\lambda}(\mu)\in\mathbb{Z}$
 is the value of the character $\chi^{\lambda}$ attached to the irreducible representation of the symmetric group $S_n$ of degree $n$
 corresponding to $\lambda$ on the conjugacy class of $S_n$ of the cycle type $\mu\vdash n$.
\end{prop}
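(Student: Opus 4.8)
The proposition is essentially a translation of a classical fact about symmetric functions through the specialization $e^{(s)}$ introduced just above, so the plan is short. I would start from the identity $\zeta_{\lambda}(\{s\}^{\lambda})=e^{(s)}s_{\lambda}=s_{\lambda}(1^{-s},2^{-s},\ldots)$, which is immediate from comparing the tableau definition \eqref{def:SMZ} of $\zeta_{\lambda}$ with the tableau definition of the Schur function $s_{\lambda}$; the right-hand side is meaningful because $\{s\}^{\lambda}\in W_{\lambda}$ when $\Re(s)>1$, so the series converges absolutely by Lemma~\ref{lem:convergence}.

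The key input is the expansion of the Schur function in the power-sum basis,
\[
 s_{\lambda}=\sum_{\mu\vdash n}\frac{\chi^{\lambda}(\mu)}{z_{\mu}}\,p_{\mu},
 \qquad p_{\mu}=\prod_{i=1}^{\ell(\mu)}p_{\mu_i},\quad p_k=\sum_{j\ge 1}x_j^{k},
\]
which is classical: it expresses the fact that the Frobenius characteristic of the irreducible $S_n$-character $\chi^{\lambda}$ equals $s_{\lambda}$, equivalently it follows from the duality between the $s_{\lambda}$ and the $p_{\mu}$ under the Hall inner product. I would then apply $e^{(s)}$ to both sides. Since $e^{(s)}(p_k)=\sum_{j\ge 1}j^{-ks}=\zeta(ks)$ and $\Re(ks)=k\Re(s)>1$ for every $k\ge 1$, each $p_k$ specializes to an absolutely convergent Dirichlet series; as the displayed identity is a finite $\mathbb{C}$-linear combination of products of such series, all of total degree $n$, the specialization is legitimate term by term and $e^{(s)}(p_{\mu})=\prod_{i=1}^{\ell(\mu)}\zeta(\mu_i s)$. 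Combining, $\zeta_{\lambda}(\{s\}^{\lambda})=e^{(s)}s_{\lambda}=\sum_{\mu\vdash n}z_{\mu}^{-1}\chi^{\lambda}(\mu)\prod_{i=1}^{\ell(\mu)}\zeta(\mu_i s)$, which is \eqref{for:expnsionsbyp}.

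The only genuinely analytic point is justifying that the formal symmetric-function identity may be evaluated at the infinite-variable point $x_j=j^{-s}$; this is handled by the absolute convergence noted above, which also permits the rearrangements needed to identify $e^{(s)}(p_{\mu})$ with $\prod_i\zeta(\mu_i s)$ (a product of finitely many absolutely convergent Dirichlet series). I do not expect any real obstacle here: everything beyond this convergence bookkeeping is a direct invocation of the character theory of the symmetric group.
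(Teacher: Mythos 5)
Your proof is correct and follows exactly the paper's own route: specialize via $e^{(s)}$, expand $s_{\lambda}$ in the power-sum basis with coefficients $\chi^{\lambda}(\mu)/z_{\mu}$, and use $e^{(s)}p_r=\zeta(rs)$. The added convergence bookkeeping is fine but not a departure from the paper's argument.
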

\begin{proof}
 For a partition $\mu$,
 let $p_{\mu}=p_{\mu}({\pmb x})$ be the power-sum symmetric function defined by
 $p_{\mu}=\prod^{\ell(\mu)}_{i=1}p_{\mu_i}$ where $p_r=p_r({\pmb x})=\sum^{\infty}_{i=1}x_i^r$.
 We know that the Schur function can be written as
 a linear combination of power-sum symmetric functions (see \cite{Mac}) as 
\[
 s_{\lambda}=\sum_{\mu\vdash n}\frac{\chi^{\lambda}(\mu)}{z_{\mu}}p_{\mu}.
\]
 Hence, one obtains the desired expression by noticing $e^{(s)}p_r=p_r(1^{-s},2^{-s},\ldots)=\zeta(r s)$.
\end{proof}

\begin{rmk}
 For variables ${\pmb x}=(x_1,x_2,\ldots)$,
 let $e_{n}=e_{n}({\pmb x})$ and $h_{n}=h_{n}({\pmb x})$ be
 the elementary and complete symmetric functions of degree $n$,
 which are respectively defined by 
\[
 e_{n}=\sum_{i_1<\cdots<i_n}x_{i_1}\cdots x_{i_n}, \quad 
 h_{n}=\sum_{i_1\le \cdots\le i_n}x_{i_1}\cdots x_{i_n}.
\] 
 Since $s_{(1^n)}=e_{n}$ and $s_{(n)}=h_n$ with 
 $\chi^{(1^n)}(\mu)=|\mu|-\ell(\mu)$ and $\chi^{(n)}(\mu)=1$, 
 we have from \eqref{for:expnsionsbyp}  
\begin{align*}
 \zeta(s,\ldots,s)
&=e^{(s)}e_{n}
=e_{n}(1^{-s},2^{-s},\ldots)
=\sum_{\mu\vdash n}\frac{(-1)^{n-\ell(\mu)}}{z_{\mu}}\prod^{\ell(\mu)}_{i=1}\zeta(\mu_i s), \\
 \zeta^{\star}(s,\ldots,s)
&=e^{(s)}h_{n}
=h_n(1^{-s},2^{-s},\ldots)
=\sum_{\mu\vdash n}\frac{1}{z_{\mu}}\prod^{\ell(\mu)}_{i=1}\zeta(\mu_i s).
\end{align*}
 These expression are respectively implied from Theorem~2.2 and 2.1 in \cite{H1}.
\end{rmk}

 It is shown in e.g., \cite{H1,Za,M} 
 that $\zeta(2k,\ldots,2k),\zeta^{\star}(2k,\ldots,2k)\in\mathbb{Q}\pi^{2kn}$.
 These can be generalized to the Schur multiple zeta $\lq\lq$values" as follows.
 
\begin{cor}
 It holds that $\zeta_{\lambda}(\{2k\}^{\lambda})\in\mathbb{Q}\pi^{2k|\lambda|}$ for $k\in\mathbb{N}$.
\end{cor}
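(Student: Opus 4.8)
The plan is to read off the claim from the power-sum expansion \eqref{for:expnsionsbyp}. Since $k\in\mathbb{N}$, we have $\Re(2k)=2k\ge 2>1$, so the Proposition applies with $s=2k$ and yields, writing $n=|\lambda|$,
\[
 \zeta_{\lambda}(\{2k\}^{\lambda})
 =\sum_{\mu\vdash n}\frac{\chi^{\lambda}(\mu)}{z_{\mu}}\prod_{i=1}^{\ell(\mu)}\zeta(2k\mu_i).
\]
Thus everything reduces to understanding each factor $\zeta(2k\mu_i)$ and each coefficient $\chi^{\lambda}(\mu)/z_{\mu}$.

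Next I would invoke Euler's classical formula $\zeta(2m)=(-1)^{m+1}\dfrac{(2\pi)^{2m}B_{2m}}{2(2m)!}$, where the Bernoulli numbers $B_{2m}$ are rational; in particular $\zeta(2m)\in\mathbb{Q}\pi^{2m}$ for every $m\in\mathbb{N}$. Applying this with $m=k\mu_i\in\mathbb{N}$ gives $\zeta(2k\mu_i)\in\mathbb{Q}\pi^{2k\mu_i}$, hence
\[
 \prod_{i=1}^{\ell(\mu)}\zeta(2k\mu_i)\in\mathbb{Q}\,\pi^{2k\sum_{i}\mu_i}=\mathbb{Q}\,\pi^{2k|\mu|}=\mathbb{Q}\,\pi^{2kn},
\]
since $|\mu|=n$ for every $\mu\vdash n$. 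In other words, each summand on the right-hand side above lies in the same space $\mathbb{Q}\pi^{2kn}$.

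Finally, since $\chi^{\lambda}(\mu)\in\mathbb{Z}$ and $z_{\mu}=\prod_{i\ge 1}i^{m_i(\mu)}m_i(\mu)!$ is a positive integer, each coefficient $\chi^{\lambda}(\mu)/z_{\mu}$ belongs to $\mathbb{Q}$; summing the finitely many terms, all in $\mathbb{Q}\pi^{2kn}$, we conclude $\zeta_{\lambda}(\{2k\}^{\lambda})\in\mathbb{Q}\pi^{2k|\lambda|}$, as desired. There is no real obstacle here: the statement is an immediate corollary of \eqref{for:expnsionsbyp} and Euler's formula, the only point to verify being that the hypothesis $\Re(s)>1$ of the Proposition holds, which is automatic because $k\ge 1$.
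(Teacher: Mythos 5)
Your proof is correct and follows exactly the paper's argument: the corollary is deduced from the expansion \eqref{for:expnsionsbyp} together with Euler's evaluation $\zeta(2m)\in\mathbb{Q}\pi^{2m}$, with the exponents adding up to $2k|\lambda|$ because every $\mu$ in the sum is a partition of $n=|\lambda|$. You have simply written out the details that the paper leaves implicit.
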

\begin{proof}
 This is a direct consequence of the expression \eqref{for:expnsionsbyp}
 together with the fact $\zeta(2k)\in\mathbb{Q}\pi^{2k}$ obtained by Euler
 (and hence the rational part can be explicitly written in terms of the Bernoulli numbers).
\end{proof}

\begin{exam}
\label{ex:n3}
 When $n=3$, we have 
\begin{align*}
\ytableausetup{boxsize=normal,aligntableaux=center}
 \ytableaushort{sss}\,
&=\frac{1}{6}\zeta(s)^3+\frac{1}{2}\zeta(2s)\zeta(s)+\frac{1}{3}\zeta(3s)=\zeta^{\star}(s,s,s),\\
 \ytableaushort{ss,s}\,
&=\frac{2}{6}\zeta(s)^3+\frac{0}{2}\zeta(2s)\zeta(s)+\frac{-1}{3}\zeta(3s),\\
 \ytableaushort{s,s,s}\,
&=\frac{1}{6}\zeta(s)^3+\frac{-1}{2}\zeta(2s)\zeta(s)+\frac{1}{3}\zeta(3s)=\zeta(s,s,s).
\end{align*}
 Special values of $\zeta_{\lambda}(\{2k\}^{\lambda})$ for $\lambda \vdash 3$ with small $k$ are given as follows:
\begin{center}
\ytableausetup{boxsize=11pt,aligntableaux=center}
\begin{tabular}{c||c|c|c|c}
 $\zeta_{\lambda}(\{2k\}^{\lambda})$ & $k=1$ & $k=2$ & $k=3$ & $k=4$ \\[5pt]
\hline
\hline 
\shortstack{\\[2pt] 
\begin{ytableau}
 \scriptstyle 2k & \scriptstyle 2k & \scriptstyle 2k 
\end{ytableau}
\\[0pt]}
 & \shortstack{\\[3pt] $\frac{31\pi^6}{15120}$ \\ \ }
 & \shortstack{\\[3pt] $\frac{4009\pi^{12}}{3405402000}$ \\ \ }
 & \shortstack{\\[3pt] $\frac{223199\pi^{18}}{194896477400625}$ \\ \ }
 & \shortstack{\\[3pt] $\frac{2278383389\pi^{24}}{1938427890852062610000}$ \\ \ } 
\\
\hline
\shortstack{\\[2pt]
\begin{ytableau}
 \scriptstyle 2k & \scriptstyle 2k \\
 \scriptstyle 2k 
\end{ytableau}
\\[0pt]}
 & \shortstack{\\[3pt] $\frac{\pi^6}{840}$ \\[5pt] \ }
 & \shortstack{\\[3pt] $\frac{493\pi^{12}}{5108103000}$ \\[5pt] \ }
 & \shortstack{\\[3pt] $\frac{86\pi^{18}}{4331032831125}$ \\[5pt] \ }
 & \shortstack{\\[3pt] $\frac{116120483\pi^{24}}{24230348635650782625000}$ \\[5pt] \ }  
\\
\hline
\shortstack{\\[2pt]
\begin{ytableau}
 \scriptstyle 2k \\
 \scriptstyle 2k \\
 \scriptstyle 2k 
\end{ytableau}
\\[0pt]} 
 & \shortstack{\\[3pt] $\frac{\pi^6}{5040}$ \\[10pt] \ }
 & \shortstack{\\[3pt] $\frac{\pi^{12}}{681080400}$ \\[10pt] \ }
 & \shortstack{\\[3pt] $\frac{2\pi^{18}}{64965492466875}$ \\[10pt] \ }
 & \shortstack{\\[3pt] $\frac{38081\pi^{24}}{48460697271301565250000}$ \\[10pt] \ } 
\end{tabular}
\end{center}
\end{exam}

\begin{exam}
\label{ex:n4}
 When $n=4$, we have 
\begin{align*}
\ytableausetup{boxsize=normal,aligntableaux=center}
 \ytableaushort{ssss}\,
&=\frac{1}{24}\zeta(s)^4+\frac{1}{4}\zeta(2s)\zeta(s)^2+\frac{1}{8}\zeta(2s)^2+\frac{1}{3}\zeta(3s)\zeta(s)+\frac{1}{4}\zeta(4s)=\zeta^{\star}(s,s,s,s),\\
 \ytableaushort{sss,s}\,
&=\frac{3}{24}\zeta(s)^4+\frac{1}{4}\zeta(2s)\zeta(s)^2+\frac{-1}{8}\zeta(2s)^2+\frac{0}{3}\zeta(3s)\zeta(s)-\frac{1}{4}\zeta(4s),\\
 \ytableaushort{ss,ss}\,
&=\frac{2}{24}\zeta(s)^4+\frac{0}{4}\zeta(2s)\zeta(s)^2+\frac{2}{8}\zeta(2s)^2+\frac{-1}{3}\zeta(3s)\zeta(s)+\frac{0}{4}\zeta(4s),\\
 \ytableaushort{ss,s,s}\,
&=\frac{3}{24}\zeta(s)^4+\frac{-1}{4}\zeta(2s)\zeta(s)^2+\frac{-1}{8}\zeta(2s)^2+\frac{0}{3}\zeta(3s)\zeta(s)+\frac{1}{4}\zeta(4s),\\
 \ytableaushort{s,s,s,s}\,
&=\frac{1}{24}\zeta(s)^4+\frac{-1}{4}\zeta(2s)\zeta(s)^2+\frac{1}{8}\zeta(2s)^2+\frac{1}{3}\zeta(3s)\zeta(s)+\frac{-1}{4}\zeta(4s)=\zeta(s,s,s,s).
\end{align*}
 Special values of $\zeta_{\lambda}(\{2k\}^{\lambda})$ for $\lambda\vdash 4$ with small $k$ are given as follows:
\begin{center}
\ytableausetup{boxsize=11pt,aligntableaux=center}
\begin{tabular}{c||c|c|c|c}
 $\zeta_{\lambda}(\{2k\}^{\lambda})$ & $k=1$ & $k=2$ & $k=3$ & $k=4$ \\[5pt]
\hline
\hline 
\shortstack{\\[2pt] 
\begin{ytableau}
 \scriptstyle 2k & \scriptstyle 2k & \scriptstyle 2k & \scriptstyle 2k
\end{ytableau}
\\[0pt]}
 & \shortstack{\\[3pt] $\frac{127\pi^8}{604800}$ \\ \ }
 & \shortstack{\\[3pt] $\frac{13739\pi^{16}}{1136785104000}$ \\ \ } 
 & \shortstack{\\[3pt] $\frac{1202645051\pi^{24}}{1009597859818782609375}$ \\ \ }
 & \shortstack{\\[3pt] $\frac{3467913415992313\pi^{32}}{27995618815818008860855350000000}$ \\ \ }  
\\
\hline
\shortstack{\\[2pt] 
\begin{ytableau}
 \scriptstyle 2k & \scriptstyle 2k & \scriptstyle 2k \\
 \scriptstyle 2k 
\end{ytableau}
\\[0pt]} 
 & \shortstack{\\[3pt] $\frac{239\pi^8}{1814400}$ \\[5pt] \ }
 & \shortstack{\\[3pt] $\frac{62191\pi^{16}}{62523180720000}$ \\[5pt] \ }
 & \shortstack{\\[3pt] $\frac{62572402\pi^{24}}{3028793579456347828125}$ \\[5pt] \ }
 & \shortstack{\\[3pt] $\frac{2019988202341\pi^{32}}{3999374116545429837265050000000}$ \\[5pt] \ }  
\\
\hline
\shortstack{\\[2pt] 
\begin{ytableau}
 \scriptstyle 2k & \scriptstyle 2k \\
 \scriptstyle 2k & \scriptstyle 2k 
\end{ytableau}
\\[0pt]} 
 & \shortstack{\\[3pt] $\frac{11\pi^8}{302400}$ \\[10pt] \ }
 & \shortstack{\\[3pt] $\frac{113\pi^{16}}{1838917080000}$ \\[10pt] \ }
 & \shortstack{\\[3pt] $\frac{14074\pi^{24}}{43895559122555765625}$ \\[10pt] \ }
 & \shortstack{\\[3pt] $\frac{30650383\pi^{32}}{15570422033269192914825000000}$ \\[10pt] \ }  
\\
\hline
\shortstack{\\[2pt] 
\begin{ytableau}
 \scriptstyle 2k & \scriptstyle 2k \\
 \scriptstyle 2k \\
 \scriptstyle 2k 
\end{ytableau}
\\[0pt]} 
 & \shortstack{\\[3pt] $\frac{11\pi^8}{362880}$ \\[15pt] \ }
 & \shortstack{\\[3pt] $\frac{29\pi^{16}}{1786376592000}$ \\[15pt] \ }
 & \shortstack{\\[3pt] $\frac{98642\pi^{24}}{3028793579456347828125}$ \\[15pt] \ }
 & \shortstack{\\[3pt] $\frac{332561213\pi^{32}}{3999374116545429837265050000000}$ \\[15pt] \ }  
\\
\hline
\shortstack{\\[2pt] 
\begin{ytableau}
 \scriptstyle 2k \\
 \scriptstyle 2k \\
 \scriptstyle 2k \\ 
 \scriptstyle 2k 
\end{ytableau}
\\[0pt]} 
 & \shortstack{\\[3pt] $\frac{\pi^8}{362880}$ \\[20pt] \ }
 & \shortstack{\\[3pt] $\frac{\pi^{16}}{12504636144000}$ \\[20pt] \ }
 & \shortstack{\\[3pt] $\frac{4\pi^{24}}{432684797065192546875}$ \\[20pt] \ }
 & \shortstack{\\[3pt] $\frac{13067\pi^{32}}{9331872938606002953618450000000}$ \\[20pt] \ }  
\end{tabular}
\end{center}
\end{exam} 


\section{Jacobi-Trudi formulas}

 The aim of this section is to give a proof of Theorem~\ref{JT}.
 To do that,
 we need some basic concepts in combinatorial method.
 Namely, we try to understand SMZF as a sum of weights of patterns on the $\mathbb{Z}^2$ lattice,
 similarly to Schur functions (more precisely, see, e.g., \cite{LP,HG,Ste,Zi}).
 Now, we do not work on SMZFs themselves, but with a truncated version of those,
 which may correspond to the Schur polynomial in theory of Schur functions.
 For $N\in \mathbb{N}$, let $\mathrm{SSYT}_N(\lambda)$ be the set of all $(m_{ij})\in \mathrm{SSYT}(\lambda)$ such that $m_{ij}\le N$ for all $i,j$.
 Define
\[
 \zeta^{N}_{\lambda}({\pmb s})
=\sum_{M\in\mathrm{SSYT}_N(\lambda)}\frac{1}{M^{{\pmb s}}}.
\]
 In particular, put
\[
\ytableausetup{boxsize=normal,aligntableaux=center}
 \zeta^{N}(s_1,\ldots,s_n)
=\zeta^{N}_{(1^n)}
\left(~ 
\begin{ytableau}
 s_1 \\
 \lower2pt\vdots \\
 s_{n}
\end{ytableau} 
~\right)
, \quad
 \zeta^{N\star}(s_1,\ldots,s_n)
=\zeta^N_{(n)}\left(~
\begin{ytableau}
 s_1 & \cdots & s_{n}
\end{ytableau} 
~\right) 
.
\]
 Notice that
 $\displaystyle{\lim_{N \to \infty} \zeta^N_{\lambda}({\pmb s})=\zeta_{\lambda}({\pmb s})}$ when ${\pmb s}\in W_\lambda$.
 Similarly to \eqref{for:zsz},
 we have the expressions
\begin{equation}
\label{for:zsNzN}
 \zeta^{N\star}({\pmb s})
=\sum_{{\pmb t} \,\preceq\, {\pmb s}}\zeta^{N}({\pmb t}), \quad 
 \zeta^{N}({\pmb s})
=\sum_{{\pmb t} \,\preceq\, {\pmb s}}(-1)^{n-\ell({\pmb t})}\zeta^{N\star}({\pmb t}).
\end{equation}

\subsection{A proof of the Jacobi-Trudi formula of $H$-type}

\subsubsection{Rim decomposition of partition}

 A skew partition is a pair of partitions $(\lambda,\mu)$
 satisfying $\mu\subset \lambda$, that is $\mu_i\le \lambda_i$ for all $i$.
 The resulting skew shape is denoted by $\lambda/\mu$ and
 the corresponding Young diagram is by $D(\lambda/\mu)$.
 We often identify $\lambda/\mu$ with $D(\lambda/\mu)$.
 A skew Young diagram $\theta$ is called a {\it ribbon}
 if $\theta$ is connected and contains no $2\times 2$ block of boxes.

 Let $\lambda$ be a partition.
 A sequence $\Theta=(\theta_1,\ldots,\theta_t)$ of ribbons is called a
 {\it rim decomposition of $\lambda$}
 if $D(\theta_k)\subset D(\lambda)$ for $1\le k\le t$
 and $\theta_1\sqcup\cdots\sqcup\theta_k$,
 the gluing of $\theta_1,\ldots,\theta_k$,
 is (the Young diagram of) a partition $\lambda^{(k)}$ for $1\le k\le t$
 satisfying $\lambda^{(t)}=\lambda$.
 One can naturally identify a rim decomposition $\Theta=(\theta_1,\ldots,\theta_t)$ of $\lambda$
 with the Young tableau $T=(t_{ij})\in T(\lambda,\{1,\ldots,t\})$
 defined by $t_{ij}=k$ if $(i,j)\in D(\theta_k)$. 
\begin{exam}
\label{ex:rim}
 The following $\Theta=(\theta_1,\theta_2,\theta_3,\theta_4)$ is a rim decomposition of $\lambda=(4,3,3,2)$;
\[
\ytableausetup{boxsize=normal,aligntableaux=center}
 \Theta
=\,
\begin{ytableau}
 1 & 1 & 3 & 3 \\
 2 & 3 & 3 \\
 2 & 3 & 4 \\
 3 & 3 
\end{ytableau}
,
\]
 which means that \!\!\!\!\!
\ytableausetup{mathmode,boxsize=10pt,aligntableaux=center}
 $\theta_1=\ydiagram{2}$\,,
 $\theta_2=\ydiagram{0,1,1}$\,,
 $\theta_3=\ydiagram{2+2,1+2,1+1,2}$\, and
 $\theta_4=\ydiagram{1}$\,.
\end{exam}

 Write $\lambda=(\lambda_1,\ldots,\lambda_r)$.
 We call a rim decomposition $\Theta=(\theta_1,\ldots,\theta_r)$ of $\lambda$ an {\it $H$-rim decomposition} 
 if each $\theta_i$ starts from $(i,1)$ for all $1\le i\le r$.
 Here, we permit $\theta_i=\emptyset$. 
 We denote by $\mathrm{Rim}^{\lambda}_H$ the set of all $H$-rim decompositions of $\lambda$.

\begin{exam}
\label{ex:Hrim}
 The following $\Theta=(\theta_1,\theta_2,\theta_3,\theta_4)$ is an $H$-rim decomposition of $\lambda=(4,3,3,2)$; 
\[
\ytableausetup{boxsize=normal,aligntableaux=center}
 \Theta
=\,
\begin{ytableau}
 1 & 1 & 3 & 3 \\
 3 & 3 & 3 \\
 3 & 4 & 4 \\
 4 & 4 
\end{ytableau}
,
\]
 which means that \!\!\!\!\!
\ytableausetup{mathmode,boxsize=10pt,aligntableaux=center} 
 $\theta_1=\ydiagram{2}$\,,
 $\theta_2=\emptyset$\,, 
 $\theta_3=\ydiagram{2+2,3,1}$\, and 
 $\theta_4=\ydiagram{1+2,2}$\,.
 Note that the rim decomposition appearing in Example~\ref{ex:rim} is not an $H$-rim decomposition.
\end{exam}

\begin{rmk}
 The $H$-rim decompositions also appeared in \cite{EggeaLoehrbWarringtonc2010},
 where they are called the {\it flat special rim-hooks}.
 They are used to compute the coefficients of the linear expansion of a given symmetric function via Schur functions.  
\end{rmk}

\subsubsection{Patterns on the $\mathbb{Z}^2$ lattice}

 Fix $N\in\mathbb{N}$.
 For a partition $\lambda=(\lambda_1,\ldots,\lambda_r)$,
 let $A_i$ and $B_i$ be lattice points in $\mathbb{Z}^2$
 respectively given by $A_i=(r+1-i,1)$ and $B_i=(r+1-i+\lambda_i,N)$ for $1\le i\le r$.
 Put $A=(A_1,\ldots,A_r)$ and $B=(B_1,\ldots,B_r)$.
 An {\it $H$-pattern} corresponding to $\lambda$ is a tuple $L=(l_1,\ldots,l_r)$ of directed paths on $\mathbb{Z}^2$,
 whose directions are allowed only to go one to the right or one up,
 such that $l_i$ starts from $A_i$ and ends to $B_{\sigma(i)}$ for some $\sigma\in S_r$.
 We call such $\sigma\in S_r$ the {\it type} of $L$ and denote it by $\sigma=\mathrm{type}(L)$.
 Note that the type of an $H$-pattern does not depend on $N$
 in the sense that the number of horizontal edges of each directed path of the $H$-pattern is independent of $N$.
 The number of horizontal edges appearing in the path $l_i$ is called the {\it horizontal distance} of $l_i$ and is denoted by $\mathrm{hd}(l_i)$.
 When $\mathrm{type}(L)=\sigma$,
 we simply write $L:A\to B^{\sigma}$ where $B^{\sigma}=(B_{\sigma(1)},\ldots,B_{\sigma(r)})$ and $l_i:A_i\to B_{\sigma(i)}$. 
 It is easy to see that $\mathrm{hd}(l_i)=\lambda_{\sigma(i)}-\sigma(i)+i$ and $\sum^{r}_{i=1}\mathrm{hd}(l_i)=|\lambda|$.
 
 Let $\mathcal{H}^{N}_{\lambda}$ be the set of all $H$-patterns corresponding to $\lambda$ and
 $S^{\lambda}_H=\{\mathrm{type}(L)\in S_r\,|\,L\in \mathcal{H}^{N}_{\lambda}\}$.
 The following is a key lemma of our study, which is easily verified.
 
\begin{lemma}
\label{lem:rimtypeH}
 For $\Theta=(\theta_1,\ldots,\theta_r) \in \mathrm{Rim}^{\lambda}_H$,
 there exists $L=(l_1,\ldots,l_r)\in \mathcal{H}^{N}_{\lambda}$ 
 such that $\mathrm{hd}(l_i)=|\theta_i|$ for all $1\le i\le r$.
 Moreover, the map $\tau_H:\mathrm{Rim}^{\lambda}_H\to S^{\lambda}_H$ given by $\tau_H(\Theta)=\mathrm{type}(L)$ is a bijection.
\end{lemma}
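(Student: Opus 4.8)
The plan is to establish the bijection $\tau_H$ by exhibiting an explicit, mutually inverse correspondence between $H$-rim decompositions of $\lambda$ and $H$-patterns on the $\mathbb{Z}^2$ lattice, and then checking that it preserves the relevant numerical data. First I would recall the standard dictionary between a lattice path from $A_i=(r+1-i,1)$ to $B_{\sigma(i)}=(r+1-i+\lambda_{\sigma(i)},N)$ and a horizontal strip (more precisely, a ribbon once we remember which path it belongs to): reading a non-crossing path from its starting corner, the sequence of columns at which it takes a horizontal step records the boxes of a ribbon $\theta_i$ lying in rows $i,i+1,\ldots$ of $D(\lambda)$, with $\mathrm{hd}(l_i)$ equal to the number of boxes $|\theta_i|$. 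The condition that $\theta_i$ \emph{starts} from $(i,1)$ corresponds exactly to the fact that the path $l_i$ emanates from $A_i$, whose $x$-coordinate $r+1-i$ encodes the $i$th row; and the requirement that $\theta_1\sqcup\cdots\sqcup\theta_k$ be a genuine partition $\lambda^{(k)}$ for each $k$ is equivalent to the paths $l_1,\ldots,l_r$ being pairwise non-crossing (this is the usual Lindström–Gessel–Viennot picture).

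Next I would argue the construction in both directions. Given $\Theta=(\theta_1,\ldots,\theta_r)\in\mathrm{Rim}^\lambda_H$, the successive rims $\theta_i$ (some possibly empty) peel off the rim of $\lambda^{(i)}$ down to $\lambda^{(i-1)}$, and the boundary lattice path of $\lambda^{(i-1)}$ between the relevant corners, shifted to start at $A_i$, defines $l_i$; the nesting $\lambda^{(0)}\subset\lambda^{(1)}\subset\cdots\subset\lambda^{(r)}=\lambda$ makes the resulting paths non-crossing, so $L=(l_1,\ldots,l_r)\in\mathcal{H}^N_\lambda$, and by construction $\mathrm{hd}(l_i)=|\theta_i|$. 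Conversely, from $L\in\mathcal{H}^N_\lambda$ with $\mathrm{type}(L)=\sigma$ one reconstructs the intermediate partitions from the regions below the successive paths and reads off the ribbons $\theta_i$ as the skew shapes between them; one checks these are indeed ribbons (no $2\times2$ block, because a single path can enclose at most a ribbon-shaped increment) and that each $\theta_i$ starts at $(i,1)$. These two assignments are visibly inverse to each other, so we have a bijection $\mathrm{Rim}^\lambda_H \xrightarrow{\ \sim\ } \mathcal{H}^N_\lambda$; composing with $L\mapsto\mathrm{type}(L)$ and noting that the fibre over each $\sigma\in S^\lambda_H$ is a single $H$-pattern (the non-crossing path system with prescribed endpoints $A\to B^\sigma$ is unique, since the horizontal distances $\mathrm{hd}(l_i)=\lambda_{\sigma(i)}-\sigma(i)+i$ are forced and the non-crossing condition then pins down each path) gives that $\tau_H$ itself is a bijection.

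The main obstacle, and the point deserving the most care, is verifying that the non-crossing condition on the $H$-pattern is genuinely equivalent to the partition condition ``$\lambda^{(k)}$ is a partition for all $k$'' in the definition of an $H$-rim decomposition, and that one does not accidentally produce disconnected or non-ribbon skew shapes. Concretely, one must check: (i) if two paths $l_i,l_j$ with $i<j$ were to cross, the corresponding skew shape $\theta$ sandwiched between consecutive partitions would fail to be a ribbon or the intermediate $\lambda^{(k)}$ would fail to be a partition; and (ii) conversely, a valid $H$-rim decomposition cannot yield crossing paths. Both are elementary once phrased in terms of the profiles (Maya diagrams / boundary sequences) of the $\lambda^{(k)}$, but they require a clean choice of conventions for how a path is read as a ribbon, including the degenerate case $\theta_i=\emptyset$ (which corresponds to $l_i$ being the ``straight'' path with $\mathrm{hd}(l_i)=0$, forcing $\sigma(i)$ with $\lambda_{\sigma(i)}-\sigma(i)+i=0$). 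I would therefore spend the bulk of the write-up fixing these conventions and the bijection on a running example such as $\lambda=(4,3,3,2)$ (as in Example~\ref{ex:Hrim}), after which the statement of Lemma~\ref{lem:rimtypeH} follows by inspection.
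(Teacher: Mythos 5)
Your argument does not go through as written; the identifications it rests on are false. First, there is no bijection $\mathrm{Rim}^{\lambda}_H\to\mathcal{H}^{N}_{\lambda}$: by definition $\mathcal{H}^{N}_{\lambda}$ consists of \emph{all} $H$-patterns, crossing ones included, and for each admissible type $\sigma$ there are many patterns $L:A\to B^{\sigma}$ --- this multiplicity is exactly what produces the factors $\zeta^{N\star}\left(\theta^{\sigma}_i({\pmb s})\right)$ in Lemma~\ref{Hstep1}, and it grows with $N$ while $\mathrm{Rim}^{\lambda}_H$ does not. For the same reason the claim that the fibre of $\mathrm{type}$ over $\sigma$ is a single pattern is false: already for $\lambda=(1)$ a path from $A_1$ to $B_1$ can take its one horizontal step at any of the $N$ heights, and for $\sigma=\mathrm{id}$ the non-crossing systems are in bijection with $\mathrm{SSYT}_N(\lambda)$, not with a point. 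Finally, the asserted equivalence between the condition that each $\lambda^{(k)}$ be a partition and the non-crossing condition on $(l_1,\ldots,l_r)$ is a category error: by the switching involution used in Proposition~\ref{prop:XX1}, every pattern of type $\sigma\ne\mathrm{id}$ has a crossing, yet $S^{\lambda}_H$ contains non-identity permutations (the paper's own example has $\tau_H(\Theta)=(1243)$ for $\lambda=(4,3,3,2)$). The rim condition governs \emph{which permutations occur as types}, not whether the paths cross.

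What the lemma actually requires --- and what your write-up never addresses --- is the numerical statement that for $\Theta=(\theta_1,\ldots,\theta_r)\in\mathrm{Rim}^{\lambda}_H$ the multiset $\{|\theta_i|-i\}_{1\le i\le r}$ coincides with $\{\lambda_j-j\}_{1\le j\le r}$. Since $j\mapsto\lambda_j-j$ is strictly decreasing, this yields a unique $\sigma\in S_r$ with $|\theta_i|=\lambda_{\sigma(i)}-\sigma(i)+i$, and since any $L:A\to B^{\sigma}$ has $\mathrm{hd}(l_i)=\lambda_{\sigma(i)}-\sigma(i)+i\ge 0$, such an $L$ exists; this gives both the existence claim and the well-definedness of $\tau_H$. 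Bijectivity then amounts to showing that every $\sigma$ with $\lambda_{\sigma(i)}-\sigma(i)+i\ge 0$ for all $i$ arises from exactly one $H$-rim decomposition, which one proves by peeling off, for $i=r,r-1,\ldots,1$, the unique ribbon of the prescribed length starting at $(i,1)$ (a ribbon has one box of each content in a consecutive range, so a ribbon starting at $(i,1)$ and ending in a box of content $c$ has exactly $c+i$ boxes, and its top-right end must sit at the end of a row of the current partial shape). That induction on the border profiles is the genuine content here, and replacing it by a Lindstr\"om--Gessel--Viennot dictionary between rim decompositions and path systems does not supply it.
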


\begin{exam}
 Let $\lambda=(4,3,3,2)$.
 Then, we have $\tau_H(\Theta)=(1243)\in S_4$ 
 where $\Theta$ is the $H$-rim decomposition of $\lambda$ appearing in Example~\ref{ex:Hrim}.
\end{exam}

\subsubsection{Weight of patterns}

 Fix ${\pmb s}=(s_{ij})\in T(\lambda,\mathbb{C})$.
 We next assign a weight to $L=(l_1,\ldots,l_r) \in \mathcal{H}^{N}_{\lambda}$ via a $H$-rim decomposition of $\lambda$ as follows.
 Take $\Theta=(\theta_1,\ldots,\theta_r)\in\mathrm{Rim}^{\lambda}_{H}$ such that $\tau_H(\Theta)=\mathrm{type}(L)$.
 Then, when the $k$th horizontal edge of $l_i$ is on the $j$th row,
 we weight it with $\frac{1}{j^{s_{pq}}}$ where $(p,q)\in D(\lambda)$ is the $k$th component of $\theta_i$.
 Now, the weight $w^N_{{\pmb s}}(l_i)$ of the path $l_i$ is defined to be the product of weights of all horizontal edges along $l_i$.
 Here, we understand that $w^{N}_{{\pmb s}}(l_i)=1$ if $\theta_i=\emptyset$.
 Moreover, we define the weight $w^N_{{\pmb s}}(L)$ of $L\in \mathcal{H}^{N}_{\lambda}$ by 
\[
 w^N_{{\pmb s}}(L)=\prod^{r}_{i=1}w^{N}_{{\pmb s}}(l_i).
\]

\begin{exam}
 Let $\lambda=(4,3,3,2)$.
 Consider the following $L=(l_1,l_2,l_3,l_4)\in \mathcal{H}^{4}_{(4,3,3,2)}$;
\begin{figure}[h]
\begin{center}
 \includegraphics[clip,width=85mm]{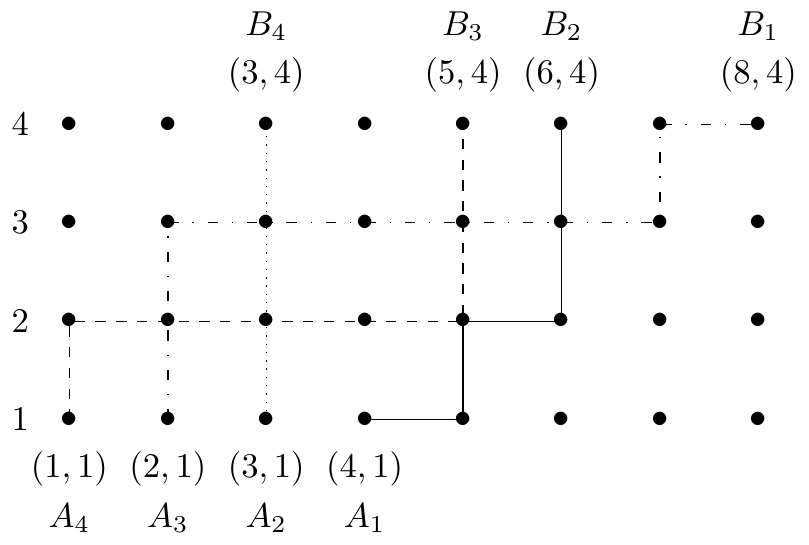}
\end{center}
\ \\[-40pt]
\caption{$L=(l_1,l_2,l_3,l_4)\in \mathcal{H}^{4}_{(4,3,3,2)}$}
\end{figure}

 Since $\mathrm{type}(L)=(1243)$,
 the corresponding $H$-rim decomposition of $\lambda$ is nothing but the one
 appearing in Example~\ref{ex:Hrim}. \\[-5pt]
 
 Let
$
\ytableausetup{boxsize=18pt,aligntableaux=center}
{\pmb s}=\,
\begin{ytableau}
 a & b & c & d \\
 e & f & g \\
 h & i & j \\
 k & l 
\end{ytableau}
\in T((4,3,3,2),\mathbb{C})
$. 
 Then, the weight of $l_i$ are given by  
\[
 w^{4}_{{\pmb s}}(l_1)
=\frac{1}{1^a2^b}, \quad
 w^{4}_{{\pmb s}}(l_2)
=1, \quad
 w^{4}_{{\pmb s}}(l_3)
=\frac{1}{3^h3^e3^f3^g3^c4^d}, \quad
 w^{4}_{{\pmb s}}(l_4)
=\frac{1}{2^k2^l2^i2^j}.
\]
 In particular, when 
$
\ytableausetup{boxsize=18pt,aligntableaux=center}
{\pmb s}=\,
\begin{ytableau}
 a_0 & a_1 & a_2 & a_3 \\
 a_{-1} & a_{0} & a_1 \\
 a_{-2} & a_{-1} & a_0 \\
 a_{-3} & a_{-2}
\end{ytableau}
\in T^{\mathrm{diag}}((4,3,3,2),\mathbb{C})$,
 these are equal to 
\[
 w^{4}_{{\pmb s}}(l_1)
=\frac{1}{1^{a_0}2^{a_1}}, \quad 
 w^{4}_{{\pmb s}}(l_2)
=1, \quad 
 w^{4}_{{\pmb s}}(l_3)
=\frac{1}{3^{a_{-2}}3^{a_{-1}}3^{a_0}3^{a_1}3^{a_2}4^{a_3}}, \quad 
 w^{4}_{{\pmb s}}(l_4)
=\frac{1}{2^{a_{-3}}2^{a_{-2}}2^{a_{-1}}2^{a_0}}.
\]
 Notice that, in this case, from the definition of the weight,
 the tuple of indices of the exponent of the denominator of $w^{4}_{{\pmb s}}(l_i)$ along $l_i$ should be equal to
 $(a_{1-i},a_{1-i+1},a_{1-i+2},\ldots)$ for all $i$. 
\end{exam}

\subsubsection{Proof}

 A proof of \eqref{Htype} is given by calculating the sum 
\[
 X^N_{\lambda}({\pmb s})
=\sum_{L\in \mathcal{H}^{N}_{\lambda}}\varepsilon_{\mathrm{type}(L)}w^{N}_{{\pmb s}}(L)
=\sum_{\sigma\in S^{\lambda}_H}
\varepsilon_{\sigma}
\sum_{L:A\to B^{\sigma}}
 w^{N}_{{\pmb s}}(L),
\]
 where $\varepsilon_{\sigma}$ is the signature of $\sigma\in S_r$.
 First, the inner sum can be calculated as follows.
 
\begin{lemma} 
\label{Hstep1}
 For $\sigma\in S^{\lambda}_H$,
 let $\Theta^{\sigma}=(\theta^{\sigma}_1,\ldots,\theta^{\sigma}_r)\in \mathrm{Rim}^{\lambda}_H$ be the $H$-rim decomposition
 such that $\tau_H(\Theta^{\sigma})=\sigma$.
 Then, we have  
\[ 
 \sum_{L:A\to B^{\sigma}}w^{N}_{{\pmb s}}(L)
=\prod^{r}_{i=1}\zeta^{N\star}\left(\theta^{\sigma}_i({\pmb s})\right).
\]
 Here, for $\Theta=(\theta_1,\ldots,\theta_r)\in\mathrm{Rim}^{\lambda}_H$, 
 $\theta_i({\pmb s})\in\mathbb{C}^{|\theta_i|}$ is the tuple
 obtained by reading contents of the shape restriction of ${\pmb s}$ to $\theta_i$ from the bottom left to the top right.
\end{lemma}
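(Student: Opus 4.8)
The plan is to factor the sum over $H$-patterns of a fixed type into a product of sums over single lattice paths, and then to identify each single-path sum with a truncated multiple zeta-star value.

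Fix $\sigma\in S^{\lambda}_H$ and let $\Theta^{\sigma}=(\theta^{\sigma}_1,\ldots,\theta^{\sigma}_r)\in\mathrm{Rim}^{\lambda}_H$ be the $H$-rim decomposition with $\tau_H(\Theta^{\sigma})=\sigma$ provided by Lemma~\ref{lem:rimtypeH}. Every $L:A\to B^{\sigma}$ has $\mathrm{type}(L)=\sigma$, so the weight of each of its components $l_i$ is computed with the same ribbon $\theta^{\sigma}_i$; in particular $w^{N}_{{\pmb s}}(l_i)$ depends on $l_i$ alone. Since an $H$-pattern $L:A\to B^{\sigma}$ is precisely an independent choice, for each $i$, of a monotone lattice path $l_i:A_i\to B_{\sigma(i)}$, and since $w^{N}_{{\pmb s}}(L)=\prod^{r}_{i=1}w^{N}_{{\pmb s}}(l_i)$, the sum factors as
\[
 \sum_{L:A\to B^{\sigma}}w^{N}_{{\pmb s}}(L)
=\prod^{r}_{i=1}\Bigl(\,\sum_{l_i:A_i\to B_{\sigma(i)}}w^{N}_{{\pmb s}}(l_i)\Bigr),
\]
so it suffices to prove $\sum_{l_i:A_i\to B_{\sigma(i)}}w^{N}_{{\pmb s}}(l_i)=\zeta^{N\star}(\theta^{\sigma}_i({\pmb s}))$ for each fixed $i$.

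Set $n_i=|\theta^{\sigma}_i|$, which by Lemma~\ref{lem:rimtypeH} equals $\mathrm{hd}(l_i)=\lambda_{\sigma(i)}-\sigma(i)+i$, the number of horizontal edges of any path $l_i:A_i\to B_{\sigma(i)}$; if $n_i=0$ then $\theta^{\sigma}_i=\emptyset$, $l_i$ is the unique vertical path, and both sides equal $1$, so assume $n_i\ge1$. Let $(p_1,q_1),\ldots,(p_{n_i},q_{n_i})$ be the boxes of $\theta^{\sigma}_i$ read from the bottom left to the top right, so that $(p_k,q_k)$ is the $k$th component of $\theta^{\sigma}_i$ and $\theta^{\sigma}_i({\pmb s})=(s_{p_1q_1},\ldots,s_{p_{n_i}q_{n_i}})$. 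A monotone path $l_i:A_i\to B_{\sigma(i)}$ is uniquely determined by the rows $j_1,\ldots,j_{n_i}\in\{1,\ldots,N\}$ of its successive horizontal edges, and these satisfy $1\le j_1\le\cdots\le j_{n_i}\le N$ since the path never descends; conversely every such weakly increasing tuple arises from exactly one path. By the definition of the weight, the path with horizontal-edge rows $(j_1,\ldots,j_{n_i})$ has weight $\prod^{n_i}_{k=1}j_k^{-s_{p_kq_k}}$, whence
\[
 \sum_{l_i:A_i\to B_{\sigma(i)}}w^{N}_{{\pmb s}}(l_i)
=\sum_{1\le j_1\le\cdots\le j_{n_i}\le N}\prod^{n_i}_{k=1}\frac{1}{j_k^{s_{p_kq_k}}}
=\zeta^{N\star}(s_{p_1q_1},\ldots,s_{p_{n_i}q_{n_i}})
=\zeta^{N\star}(\theta^{\sigma}_i({\pmb s})).
\]
Taking the product over $1\le i\le r$ finishes the proof.

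The one point needing care is the bookkeeping behind the statement that the $k$th horizontal edge of $l_i$ is weighted using the $k$th entry of $\theta^{\sigma}_i({\pmb s})$: one has to check that the order in which the horizontal edges of a monotone path are traversed agrees with the bottom-left-to-top-right reading order of the ribbon $\theta^{\sigma}_i$. This is forced by the way the weight was defined together with the fact that an $H$-rim hook is a connected chain of boxes running from its lower-left to its upper-right end; granting it, the rest is the classical bijection between monotone lattice paths with prescribed endpoints and weakly increasing integer sequences, which turns the path-sum into the series defining $\zeta^{N\star}$.
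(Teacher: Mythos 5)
Your proposal is correct and follows essentially the same route as the paper: factor the sum over $H$-patterns of type $\sigma$ into a product of single-path sums $\sum_{l_i:A_i\to B_{\sigma(i)}}w^{N}_{{\pmb s}}(l_i)$, then record each path by the weakly increasing sequence of rows of its horizontal edges to recognize the sum as $\zeta^{N\star}(\theta^{\sigma}_i({\pmb s}))$. The bookkeeping point you flag at the end is exactly what the paper's definition of the weight builds in, so nothing further is needed.
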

\begin{proof}
 Let $L=(l_1,\ldots,l_r)\in \mathcal{H}^{N}_{\lambda}$ be an $H$-pattern of type $\sigma$.
 Then $l_i$ is a path from $A_i$ to $B_{\sigma(i)}$ with $\mathrm{hd}(l_i)=\lambda_{\sigma(i)}-\sigma(i)+i=|\theta^{\sigma}_i|$.
 For simplicity, write $k_i=\lambda_{\sigma(i)}-\sigma(i)+i$ and $\theta^{\sigma}_i({\pmb s})=(s_{i,1},\ldots,s_{i,k_i})$.
 Suppose that $l_i$ has $n_j$ steps on the $j$th row for $1\le j\le N$.
 Then, from the definition of the weight, we have
\[
 w^{N}_{{\pmb s}}(l_i)
=\underbrace{\frac{1}{1^{s_{i,1}}}\cdots \frac{1}{1^{s_{i,n_1}}}}_{\text{$n_1$ terms}}
\underbrace{\frac{1}{2^{s_{i,n_1+1}}}\cdots \frac{1}{2^{s_{i,n_1+n_2}}}}_{\text{$n_2$ terms}}
\cdots 
\underbrace{\frac{1}{N^{s_{i,n_1+\cdots+n_{N-1}+1}}}\cdots \frac{1}{N^{s_{i,n_1+\cdots+n_{N}}}}}_{\text{$n_N$ terms}}
\] 
 with $n_1+\cdots+n_N=k_i$.
 This shows that  
\begin{align*}
 \sum_{L:A\to B^{\sigma}}w^{N}_{{\pmb s}}(L)
&=\prod^{r}_{i=1}\sum_{l_i:A_i\to B_{\sigma(i)}}w^{N}_{{\pmb s}}(l_i)\\
&=\prod^{r}_{i=1}\sum_{1\le m_1\le \cdots \le m_{k_i}\le N}\frac{1}{m_1^{s_{i,1}}\cdots m_{k_i}^{s_{i,k_i}}}\\
&=\prod^{r}_{i=1}\zeta^{N\star}(s_{i,1},\ldots,s_{i,k_i}).
\end{align*}
\end{proof}

 From Lemma~\ref{Hstep1}, we have 
\begin{equation}
\label{for:XN}
 X^N_{\lambda}({\pmb s})
=\sum_{\sigma\in S^{\lambda}_H}
\varepsilon_{\sigma}
 \prod^{r}_{i=1}\zeta^{N\star}\left(\theta^{\sigma}_i({\pmb s})\right).
\end{equation}
 Let $\mathcal{H}^{N}_{\lambda,0}$ be the set of all $L=(l_1,\ldots,l_r)\in \mathcal{H}^{N}_{\lambda}$ 
 such that any distinct pair of $l_i$ and $l_j$ has no intersection.
 Define 
\[ 
 X^{N}_{\lambda,0}({\pmb s})
=\sum_{L\in \mathcal{H}^{N}_{\lambda,0}}\varepsilon_{\mathrm{type}(L)}w^{N}_{{\pmb s}}(L), \quad
 X^{N}_{\lambda,1}({\pmb s})
=\sum_{L\in \mathcal{H}^{N}_{\lambda}\setminus \mathcal{H}^{N}_{\lambda,0}}\varepsilon_{\mathrm{type}(L)}w^{N}_{{\pmb s}}(L).
\]
 Clearly we have $X^N_{\lambda}({\pmb s})=X^N_{\lambda,0}({\pmb s})+X^N_{\lambda,1}({\pmb s})$.
 Moreover, since $\mathrm{type}(L)=\mathrm{id}$ for all $L\in \mathcal{H}^{N}_{\lambda,0}$
 where $\mathrm{id}$ is the identity element of $S_r$ and
 $\mathrm{id}$ corresponds to the trivial $H$-rim decomposition
 $(\theta_1,\ldots,\theta_r)=((\lambda_1),\ldots,(\lambda_r))$,
 employing the well-known identification between non-intersecting lattice paths and semi-standard Young tableaux
 (see, e.g., \cite{Sagan2001}),
 we have 
\[
 X^N_{\lambda,0}({\pmb s})
=\sum_{L\in \mathcal{H}^{N}_{\lambda,0}}w^{N}_{{\pmb s}}(L)
=\zeta^{N}_{\lambda}({\pmb s}).
\]
 Therefore, from \eqref{for:XN},
 we reach the expression  
\begin{align}
\label{for:Xkeyexpression}
 \zeta^{N}_{\lambda}({\pmb s})
&=\sum_{\sigma\in S^{\lambda}}
\varepsilon_{\sigma}
\prod^{r}_{i=1}
\zeta^{N\star}\left(\theta^{\sigma}_i({\pmb s})\right)
-X^N_{\lambda,1}({\pmb s}).
\end{align}

 Now, one can obtain \eqref{Htype} by taking the limit $N\to\infty$ of \eqref{for:Xkeyexpression}
 under suitable assumptions on ${\pmb s}$ described in Theorem~\ref{JT} together with the following proposition.

\begin{prop}
\label{prop:XX1}
 Assume that ${\pmb s}=(s_{ij})\in T^{\mathrm{diag}}(\lambda,\mathbb{C})$.
 Write $a_{k}=s_{i,i+k}$ for $k\in\mathbb{Z}$.
\begin{itemize}
\item[$(1)$] 
 We have 
\begin{equation}
\label{for:Xdet}
 X^{N}_{\lambda}({\pmb s})
=\det
\left[
 \zeta^{N\star}(a_{-j+1},a_{-j+2},\ldots,a_{-j+(\lambda_i-i+j)})
\right]_{1\le i,j\le r}.
\end{equation}
\item[$(2)$]
 It holds that
\begin{equation}
\label{for:X1}
 X^N_{\lambda,1}({\pmb s})=0.
\end{equation}
\end{itemize}
\end{prop}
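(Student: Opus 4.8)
The two assertions are really one assertion about the combinatorics of $H$-patterns, so I would treat them together. For part (1), the strategy is the standard Lindström–Gessel–Viennot type manipulation adapted to the present weighting. Starting from the expression \eqref{for:XN}, the sum over $\sigma\in S^{\lambda}_H$ with signs should be recognized as a cofactor expansion of the determinant on the right-hand side of \eqref{for:Xdet}. Concretely, I would first check that the $(i,j)$ entry $\zeta^{N\star}(a_{-j+1},a_{-j+2},\ldots,a_{-j+(\lambda_i-i+j)})$ is exactly what one gets from a single path $l_i$ running from $A_i$ to $B_j$: by Lemma~\ref{Hstep1} (applied to the one-row ribbon that such a path realizes) its weight-sum over all monotone paths $A_i\to B_j$ equals $\zeta^{N\star}$ of the tuple of diagonal indices read along that path, and because ${\pmb s}\in T^{\mathrm{diag}}(\lambda,\mathbb{C})$ these indices depend only on $j$ and the length $\lambda_i-i+j$, not on the actual path or on $N$. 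With the convention $\zeta^{N\star}(\cdots)=1$ when $\lambda_i-i+j=0$ and $=0$ when $\lambda_i-i+j<0$ (which matches the fact that there is no valid path when the horizontal distance would be negative, and a trivial path when it is zero), the matrix entry is well-defined for all $1\le i,j\le r$. Expanding the determinant by the Leibniz formula then gives $\sum_{\sigma\in S_r}\varepsilon_\sigma\prod_{i=1}^r(\text{entry }(i,\sigma(i)))$, and the entry $(i,\sigma(i))$ vanishes unless $\lambda_{\sigma(i)}-\sigma(i)+i\ge 0$, i.e. unless $\sigma$ actually arises as $\mathrm{type}(L)$ for some $H$-pattern; on the surviving permutations the product of entries equals $\prod_i \zeta^{N\star}(\theta^\sigma_i({\pmb s}))$ by Lemma~\ref{lem:rimtypeH} and Lemma~\ref{Hstep1}. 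Hence the Leibniz expansion of the determinant coincides termwise with \eqref{for:XN}, which is \eqref{for:Xdet}.

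For part (2), the claim $X^N_{\lambda,1}({\pmb s})=0$ is the usual sign-reversing-involution argument, but one must be slightly careful because the weights here are not the generic monomial weights of the classical LGV theorem. The plan is: given $L=(l_1,\ldots,l_r)\in\mathcal{H}^N_\lambda\setminus\mathcal{H}^N_{\lambda,0}$, pick the intersecting pair by a fixed deterministic rule (say the lexicographically smallest pair $(i,j)$ of indices whose paths meet, and among their intersection points the first one encountered in a fixed sweep order), and swap the tails of $l_i$ and $l_j$ beyond that point. This produces $L'\in\mathcal{H}^N_\lambda\setminus\mathcal{H}^N_{\lambda,0}$ with $\mathrm{type}(L')=\mathrm{type}(L)\cdot(i\,j)$, hence $\varepsilon_{\mathrm{type}(L')}=-\varepsilon_{\mathrm{type}(L)}$, and the map $L\mapsto L'$ is an involution. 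The point that needs verification — and this is the main obstacle — is that the tail-swap preserves the total weight, i.e. $w^N_{{\pmb s}}(L')=w^N_{{\pmb s}}(L)$. This is not automatic from the classical argument because the weight of a horizontal edge depends, via the associated $H$-rim decomposition, on \emph{which} ribbon and which position within the ribbon the edge corresponds to, and that assignment changes under the swap. Here the hypothesis ${\pmb s}\in T^{\mathrm{diag}}(\lambda,\mathbb{C})$ is exactly what saves the day: as noted in the weight examples, the tuple of exponent-indices read along $l_i$ is always $(a_{1-i},a_{2-i},a_{3-i},\ldots)$, i.e. it is determined solely by the starting row $i$ and by how far along the path one is (equivalently by the row $j$ on which the edge sits). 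Since the two paths $l_i,l_j$ meet at a lattice point, beyond that point they occupy the same rows in the same order, so each horizontal edge retains the same $j$ (row) and the same position-count after swapping tails; thus every edge keeps its weight $1/j^{a_{k}}$ with the same $j$ and the same $k$, and $w^N_{{\pmb s}}(L')=w^N_{{\pmb s}}(L)$. Therefore the signed contributions of $L$ and $L'$ cancel, and summing over all such pairs gives $X^N_{\lambda,1}({\pmb s})=0$.

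Putting the two parts together with \eqref{for:Xkeyexpression} yields $\zeta^N_\lambda({\pmb s})=X^N_\lambda({\pmb s})=\det[\zeta^{N\star}(a_{-j+1},\ldots,a_{-j+(\lambda_i-i+j)})]_{1\le i,j\le r}$, and then letting $N\to\infty$ under the convergence hypotheses of Theorem~\ref{JT}(1) — here the extra assumption $\Re(s_{i,\lambda_i})>1$ guarantees that each entry $\zeta^{N\star}(a_{-j+1},\ldots,a_{-j+(\lambda_i-i+j)})$ converges, since $a_{\lambda_i-i}=s_{i,\lambda_i}$ is the last argument of the $i$th-row entry on the diagonal $j=i$ and dominates the tails of the others — gives \eqref{Htype}. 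I expect the determinant-to-Leibniz bookkeeping in part (1) to be routine; the genuinely delicate point is the weight-invariance under tail-swapping in part (2), where the diagonal-constancy hypothesis must be invoked precisely and the deterministic choice of crossing must be made carefully so that the involution is well-defined on $\mathcal{H}^N_\lambda\setminus\mathcal{H}^N_{\lambda,0}$.
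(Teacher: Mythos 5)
Your proposal follows the same route as the paper: part (1) is the Leibniz expansion of \eqref{for:XN}, extended over all of $S_r$ via the convention that $\zeta^{N\star}(\cdots)=0$ when the length $\lambda_i-i+j$ is negative, and part (2) is the Lindstr\"om--Gessel--Viennot tail-swapping involution at the first intersection point, with the diagonal hypothesis on ${\pmb s}$ invoked to make the involution weight-preserving. Both parts are correct in substance and match the paper's argument.

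One step in your part (2) is, however, stated incorrectly and should be repaired. You claim that after swapping tails ``each horizontal edge retains the same $j$ (row) and the same position-count.'' The position-count is \emph{not} retained: if $l_i$ and $l_j$ meet at a point in column $x$, then $l_i$ has $m=x-(r+1-i)$ horizontal edges before that point and $l_j$ has $m'=x-(r+1-j)$, so an edge that was the $(m'+t)$th horizontal edge of $l_j$ becomes the $(m+t)$th horizontal edge of $\overline{l_i}$, and $m\neq m'$ whenever $i\neq j$. What is actually invariant is the exponent index: the $k$th horizontal edge of a path starting at $A_i=(r+1-i,1)$ has left endpoint in column $r-i+k$, so its index $k-i$ equals $(\mathrm{column})-r$ and depends only on the location of the edge in $\mathbb{Z}^2$, not on which path it is assigned to nor on its position along that path. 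Since $L$ and $\overline{L}$ carry the same multiset of horizontal edges, every edge keeps the weight $1/(\mathrm{row})^{a_{(\mathrm{column})-r}}$, whence $w^N_{\pmb s}(\overline{L})=w^N_{\pmb s}(L)$; this is exactly the paper's remark that ``there is no change of horizontal edges.'' (Relatedly, your parenthetical identifying ``how far along the path one is'' with ``the row on which the edge sits'' should say ``the column.'') A last cosmetic point in part (1): the $(i,j)$ entry of \eqref{for:Xdet} is the weight-sum over paths $A_j\to B_i$ rather than $A_i\to B_j$; this only transposes the matrix and leaves the determinant unchanged.
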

\begin{proof}
 We first notice that,
 if ${\pmb s}=(s_{ij})\in T^{\mathrm{diag}}(\lambda,\mathbb{C})$,
 then we have
\[
 \theta^{\sigma}_i({\pmb s})
=(a_{1-i},a_{1-i+1},\ldots,a_{1-i+(\lambda_{\sigma(i)}-\sigma(i)+i)-1})
\]
 for all $1\le i\le r$.
 Therefore, understanding that $\zeta^{N}_{(k)}=0$ for $k<0$,
 from \eqref{for:XN}, we have
\begin{align*}
 X^{N}_{\lambda}({\pmb s})
&= \sum_{\sigma\in S^{\lambda}}\varepsilon_{\sigma}
\prod^{r}_{i=1}\zeta^{N\star}\left(\theta^{\sigma}_i({\pmb s})\right)\\
&=\sum_{\sigma\in S_r}
\varepsilon_{\sigma}
\prod^{r}_{i=1}
\zeta^{N\star}(a_{1-i},a_{1-i+1},\ldots,a_{1-i+(\lambda_{\sigma(i)}-\sigma(i)+i)-1})\\
&=\det
\left[
 \zeta^{N\star}(a_{1-i},a_{1-i+1},\ldots,a_{1-i+(\lambda_j-j+i)-1})
\right]_{1\le i,j\le r}\\
&=\det
\left[
 \zeta^{N\star}(a_{-i+1},a_{-j+2},\ldots,a_{-j+(\lambda_i-i+j)})
\right]_{1\le i,j\le r}.
\end{align*}
 Hence, we obtain \eqref{for:Xdet}. 

 We next show the second assertion.
 To do that,
 we employ the well-known involution $L\mapsto \overline{L}$ on $\mathcal{H}^{N}_{\lambda}\setminus \mathcal{H}^{N}_{\lambda,0}$ defined as follows.
 For $L=(l_1,\ldots,l_r)\in \mathcal{H}^{N}_{\lambda}\setminus \mathcal{H}^{N}_{\lambda,0}$ of type $\sigma$, 
 consider the first (rightmost) intersection point appearing in $L$,
 at which two paths say $l_i$ and $l_j$ cross.
 Let $\overline{L}$ be an $H$-pattern that contains every paths in $L$ except for $l_i$ and $l_j$ and
 two more paths $\overline{l_i}$ and $\overline{l_j}$.
 Here, $\overline{l_i}$ (resp. $\overline{l_j}$) follows $l_i$ (resp. $l_j$)
 until it meets the first intersection point and after that follows $l_j$ (resp. $l_i$) to the end. 
 Notice that, if ${\pmb s}=(s_{ij})\in T^{\mathrm{diag}}(\lambda,\mathbb{C})$,
 then we have $w^N_{{\pmb s}}(\overline{L})=w^N_{{\pmb s}}(L)$
 since there is no change of horizontal edges between $L$ and $\overline{L}$.
 Moreover, we have $\mathrm{type}(\overline{L})=-\mathrm{type}(L)$
 because the end points of $L$ and $\overline{L}$ are just switched.
 These imply that 
\begin{align*}
 X^N_{\lambda,1}({\pmb s})
&=\sum_{L\in \mathcal{H}^{N}_{\lambda}\setminus \mathcal{H}^{N}_{\lambda,0}}\varepsilon_{\mathrm{type}(\overline{L})}w^{N}_{{\pmb s}}(\overline{L})\\
&=-\sum_{L\in \mathcal{H}^{N}_{\lambda}\setminus \mathcal{H}^{N}_{\lambda,0}}\varepsilon_{\mathrm{type}(L)}w^{N}_{{\pmb s}}(L)\\
&=-X^N_{\lambda,1}({\pmb s})
\end{align*} 
 and therefore lead to \eqref{for:X1}.
\end{proof}

\begin{rmk}
 When ${\pmb s}\in T^{\mathrm{diag}}(\lambda,\mathbb{C})$,
 \eqref{for:Xkeyexpression} can be also written in terms of the $H$-rim decomposition as follows;
\begin{equation}
\label{for:keyexpressionHrim}
 \zeta^{N}_{\lambda}({\pmb s})
=\sum_{\Theta=(\theta_1,\theta_2,\ldots,\theta_r)\in \mathrm{Rim}^{\lambda}_H}
\varepsilon_H(\Theta)
\zeta^{N\star}\left(\theta_1({\pmb s})\right)\zeta^{N\star}\left(\theta_2({\pmb s})\right)\cdots \zeta^{N\star}\left(\theta_r({\pmb s})\right),
\end{equation}
 where $\varepsilon_H(\Theta)=\varepsilon_{\tau_H(\Theta)}$.
 Note that $\varepsilon(\Theta)=(-1)^{n-\#\{i\,|\,\theta_i\ne \emptyset\}}$ when $\lambda=(1^n)$. 
\end{rmk}

\begin{rmk}
\label{rmk:ErrorTerms}
 In some cases, 
 $X^{N}_{\lambda}({\pmb s})$ actually has a determinant expression without the assumption on variables;
\begin{align*}
\ytableausetup{boxsize=normal,aligntableaux=center}
 X^{N}_{(2,2)}
\left(~
\begin{ytableau}
 a & b \\
 c & d
\end{ytableau} 
~\right) 
&=\left|
\begin{array}{cc}
\zeta^{N\star}(a,b) & \zeta^{N\star}(c,d,b) \\
 \zeta^{N\star}(a) & \zeta^{N\star}(c,d)
\end{array}
\right|,\\[5pt]
 X^{N}_{(2,2,1)}
\left(~
\begin{ytableau}
 a & b \\
 c & d \\
 e
\end{ytableau} 
~\right) 
&=\left|
\begin{array}{ccc}
 \zeta^{N\star}(a,b) & \zeta^{N\star}(c,d,b) & \zeta^{N\star}(e,c,d,b) \\
 \zeta^{N\star}(a) & \zeta^{N\star}(c,d) & \zeta^{N\star}(e,c,d) \\
 0 & 1 & \zeta^{N\star}(e)
\end{array}
\right|.
\end{align*} 
 However, in general,
 $X^{N}_{\lambda}({\pmb s})$ can not be written as a determinant.
 For example, we have 
\begin{align*}
 X^{N}_{(2,2,2)}
\left(~
\begin{ytableau}
 a & b \\
 c & d \\
 e & f
\end{ytableau} 
~\right) 
&=\zeta^{N\star}(a,b)\zeta^{N\star}(c,d)\zeta^{N\star}(e,f)
-\zeta^{N\star}(a,b)\zeta^{N\star}(c)\zeta^{N\star}(e,f,d)\\
&\ \ \ -\zeta^{N\star}(c,a)\zeta^{N\star}(e,f,d,b) -\zeta^{N\star}(a)\zeta^{N\star}(c,d,b)\zeta^{N\star}(e,f)\\
&\ \ \ +\zeta^{N\star}(c,a,b)\zeta^{N\star}(e,f,d) +\zeta^{N\star}(a)\zeta^{N\star}(c)\zeta^{N\star}(e,f,d,b)
\end{align*}  
 and see that the righthand side does not seem to be expressed as a determinant
 (but is close to the determinant). 
 
 Similarly, $X^{N}_{\lambda,1}({\pmb s})$ does not vanish in general.
 For example,  
\begin{align*}
 X^{2}_{(2,2),1}
\left(~
\begin{ytableau}
 a & b \\
 c & d
\end{ytableau} 
~\right)
&=\left(\frac{1}{1^a1^b1^c1^d}+\frac{1}{1^a1^b1^c2^d}
+\frac{1}{1^a2^b1^c1^d}+\frac{1}{1^a2^b1^c2^d}\right.\\
&\ \ \ \ \ \ \left.\frac{1}{1^a2^b2^c2^d}+\frac{1}{2^a2^b1^c1^d}+\frac{1}{2^a2^b1^c2^d}+\frac{1}{2^a2^b2^c2^d}\right)\\
&\ \ \ -\left(\frac{1}{1^a1^b1^c1^d}+\frac{1}{1^a2^b1^c1^d}+\frac{1}{1^a2^b1^c2^d}+\frac{1}{1^a2^b2^c2^d}\right.\\
&\ \ \ \ \ \ \left.+\frac{1}{2^a1^b1^c1^d}+\frac{1}{2^a2^b1^c1^d}+\frac{1}{2^a2^b1^c2^d}+\frac{1}{2^a2^b2^c2^d}\right)\\
&=\frac{1}{1^a1^b1^c2^d}-\frac{1}{2^a1^b1^c1^d},
\end{align*}
 which actually vanishes when $a=d$.
\end{rmk}

\subsection{A proof of the Jacobi-Trudi formula of $E$-type}

 To prove \eqref{Etype},
 we need to consider another type of patterns on the $\mathbb{Z}^2$ lattice.
 Because the discussion is essentially the same as in the previous subsection,
 we omit all proofs of the results in this subsection.
 
 Let $\lambda=(\lambda_1,\ldots,\lambda_r)$ be a partition and $\lambda'=(\lambda'_1,\ldots,\lambda'_s)$ the conjugate of $\lambda$.
 A rim decomposition $\Theta=(\theta_1,\ldots,\theta_s)$ of $\lambda$ is called
 an {\it $E$-rim decomposition} if each $\theta_i$ starts from $(1,i)$ for all $1 \le i \le s$. 
 Here, we again permit $\theta_i=\emptyset$.
 We denote by $\mathrm{Rim}^{\lambda}_E$ the set of all $E$-rim decompositions of $\lambda$.

\begin{exam}
\label{ex:Erim}
 The following $\Theta=(\theta_1,\theta_2,\theta_3,\theta_4)$ is an $E$-rim decomposition of $\lambda=(4,3,3,2)$; 
\[
\ytableausetup{boxsize=normal,aligntableaux=center}
 \Theta
=\,
\begin{ytableau}
 1 & 2 & 3 & 4 \\
 1 & 2 & 3 \\
 1 & 3 & 3 \\
 3 & 3 
\end{ytableau}
,
\]
 which means that \!\!\!\!\!
\ytableausetup{mathmode,boxsize=10pt,aligntableaux=center} 
 $\theta_1=\ydiagram{1,1,1}$\,,
 $\theta_2=\ydiagram{1,1}$\,, 
 $\theta_3=\ydiagram{2+1,2+1,1+2,2}$\, and 
 $\theta_4=\ydiagram{1}$\,.
\end{exam}

 Fix $N\in\mathbb{N}$.
 Let $C_i$ and $D_i$ be lattice points in $\mathbb{Z}^2$
 respectively given by $C_i=(s+1-i,1)$ and $D_i=(s+1-i+\lambda'_i,N+1)$ for $1\le i\le s$.
 Put $C=(C_1,\ldots,C_s)$ and $D=(D_1,\ldots,D_s)$.
 An {\it $E$-pattern} corresponding to $\lambda$ is a tuple $L=(l_1,\ldots,l_s)$ of directed paths on $\mathbb{Z}^2$,
 whose directions are allowed only to go one to the northeast or one up,
 such that $l_i$ starts from $C_i$ and ends to $D_{\sigma(i)}$ for some $\sigma\in S_s$.
 We also call such $\sigma\in S_s$ the {\it type} of $L$ and denote it by $\sigma=\mathrm{type}(L)$.
 The number of northeast edges appearing in the path $l_i$ is called the {\it northeast distance} of $l_i$ and is denoted by $\mathrm{ned}(l_i)$.
 When $\mathrm{type}(L)=\sigma$,
 we simply write $L:C\to D^{\sigma}$ where $D^{\sigma}=(D_{\sigma(1)},\ldots,D_{\sigma(s)})$ and $l_i:C_i\to D_{\sigma(i)}$. 
 It is easy to see that $\mathrm{ned}(l_i)=\lambda'_{\sigma(i)}-\sigma(i)+i$ and $\sum^{s}_{i=1}\mathrm{ned}(l_i)=|\lambda|$.
 
 Let $\mathcal{E}^{N}_{\lambda}$ be the set of all $E$-patterns corresponding to $\lambda$
 and $S^{\lambda}_E=\{\mathrm{type}(L)\in S_s\,|\,L\in \mathcal{E}^{N}_{\lambda}\}$.

\begin{lemma}
\label{lem:rimtypeE}
 For $\Theta=(\theta_1,\ldots,\theta_s)\in \mathrm{Rim}^{\lambda}_E$,
 there exists $L=(l_1,\ldots,l_s)\in \mathcal{E}^{N}_{\lambda}$ 
 such that $\mathrm{ned}(l_i)=|\theta_i|$ for all $1\le i\le s$.
 Moreover, the map $\tau_E:\mathrm{Rim}^{\lambda}_E\to S^{\lambda}_E$ given by $\tau_E(\Theta)=\mathrm{type}(L)$ is a bijection.
\end{lemma}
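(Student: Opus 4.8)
The plan is to mirror the proof of Lemma~\ref{lem:rimtypeH}, read along anti-diagonals instead of along rows: one interchanges the roles of rows and columns of $D(\lambda)$, replaces horizontal edges by northeast edges, $\lambda$ by $\lambda'$ and $r$ by $s$, and carries $\mathrm{Rim}^{\lambda}_H$ and $\mathcal{H}^{N}_{\lambda}$ into $\mathrm{Rim}^{\lambda}_E$ and $\mathcal{E}^{N}_{\lambda}$. Equivalently, conjugating every ribbon is a size-preserving bijection from $\mathrm{Rim}^{\lambda}_E$ onto $\mathrm{Rim}^{\lambda'}_H$, and the shear $(x,y)\mapsto(x,y-x)$ turns an $E$-pattern into a family of right/up lattice paths whose type and horizontal distances coincide with those attached to the $H$-patterns for $\lambda'$; so one could also deduce the statement from Lemma~\ref{lem:rimtypeH} applied to $\lambda'$. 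I sketch the direct version.

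First I would check that a rim decomposition forces a type. Fix $\Theta=(\theta_1,\dots,\theta_s)\in\mathrm{Rim}^{\lambda}_E$, put $d_i=|\theta_i|$ and $\lambda^{(k)}=\theta_1\sqcup\cdots\sqcup\theta_k$. Since $\lambda^{(k)}\subseteq\lambda$ has at most $s$ columns, one may form its column beta-sequence $b_j(\mu)=\mu'_j-j+s$ $(1\le j\le s)$, a strictly decreasing sequence of nonnegative integers. By induction on $k$, the partition $\lambda^{(k)}$ is obtained from $\lambda^{(k-1)}$ exactly by replacing the $k$-th entry of its beta-sequence by that entry plus $d_k$, which is the classical description of adjoining a border strip of size $d_k$ whose topmost box lies in column $k$. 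Iterating from $b_j(\emptyset)=s-j$ to $b_j(\lambda)=\lambda'_j-j+s$ produces a permutation $\sigma=\sigma(\Theta)\in S_s$ with $\lambda'_{\sigma(i)}-\sigma(i)+i=d_i$ for all $i$, and it is unique because $j\mapsto\lambda'_j-j$ is strictly decreasing. Since this identity also recovers $\Theta$ from $\sigma(\Theta)$ — a rim decomposition being determined by its ribbon sizes together with their starting columns — the map $\Theta\mapsto\sigma(\Theta)$ is injective.

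Next, given $\Theta$ and $\sigma=\sigma(\Theta)$, and taking $N$ large enough that $N\ge\max_i d_i$ (a hypothesis needed here but not in the $H$-case, since an admissible $E$-path has exactly $N$ steps), for each $i$ there is a path $l_i$ from $C_i$ to $D_{\sigma(i)}$ with $d_i$ northeast steps and $N-d_i$ up steps; then $L=(l_1,\dots,l_s)\in\mathcal{E}^{N}_{\lambda}$ has $\mathrm{type}(L)=\sigma$ and $\mathrm{ned}(l_i)=d_i=|\theta_i|$. This proves the first assertion and shows $\tau_E(\Theta)=\sigma(\Theta)\in S^{\lambda}_E$ is independent of the choices of $L$ and $N$. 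For surjectivity, take $\sigma\in S^{\lambda}_E$ realised by some $L\in\mathcal{E}^{N}_{\lambda}$, set $d_i=\mathrm{ned}(l_i)=\lambda'_{\sigma(i)}-\sigma(i)+i\ge 0$, and run the beta-sequence process of the previous paragraph: starting from $b_j(\emptyset)=s-j$ and replacing, for $i=1,\dots,s$, the $i$-th entry by itself plus $d_i$, one gets a chain $\emptyset=\lambda^{(0)}\subset\cdots\subset\lambda^{(s)}$ in which step $i$ adjoins a border strip $\theta_i$ of size $d_i$ with topmost box in column $i$; that each term is a partition and that $\lambda^{(s)}=\lambda$ follow from $\sigma$ being a genuine type. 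Hence $\Theta=(\theta_1,\dots,\theta_s)\in\mathrm{Rim}^{\lambda}_E$ with $\tau_E(\Theta)=\sigma$, which together with injectivity gives the bijection.

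The hard part will be the combinatorial identity invoked twice above: that adjoining a border strip of size $d$ with topmost box in column $i$ corresponds to increasing the $i$-th entry of the column beta-sequence by $d$, and that the resulting process is reversible and terminates at $\lambda$ precisely when $\sigma$ is an admissible type. This is the content of the Jacobi-Trudi / Lindstr\"om-Gessel-Viennot picture in its $E$-incarnation; the delicate point is to keep track of how the re-sorting of the beta-sequence after each step permutes its entries, since that permutation is exactly what the type $\sigma$ records.
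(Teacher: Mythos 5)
Your proposal is correct. Note that the paper offers no argument at all here: Lemma~\ref{lem:rimtypeH} is declared ``easily verified'' and the present $E$-version is stated with all proofs of the subsection omitted, so there is no written proof to compare against; what you have done is supply the verification. Your two routes are both sound: the reduction by conjugation (which carries $\mathrm{Rim}^{\lambda}_E$ onto $\mathrm{Rim}^{\lambda'}_H$ and, after a shear, $E$-patterns onto $H$-patterns for $\lambda'$) and the direct abacus bookkeeping. In the latter, the key identity is exactly as you say: since each $\theta_k$ has its top-right end at $(1,k)$, the partial shape $\lambda^{(k-1)}$ has at most $k-1$ columns, so the element $s-k$ of its column beta-set $\{\mu'_j+s-j\}$ is still ``unprocessed'' and is replaced by $s-k+d_k=\lambda'_{\sigma(k)}+s-\sigma(k)$; once processed an entry is never touched again, which gives both the uniqueness of $\sigma$ and the injectivity of $\Theta\mapsto\sigma(\Theta)$, and the surjectivity check (that $d_i\ge 0$ for all $i$ suffices for the reverse process to stay inside the set of partitions and to terminate at $\lambda$) is the short computation you indicate. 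Your remark that one needs $N\ge\max_i|\theta_i|$ for the realizing $E$-pattern to exist — because an $E$-path from height $1$ to height $N+1$ has exactly $N$ steps, unlike the $H$-case where the number of horizontal steps is unconstrained — is a genuine point that the paper passes over silently; it is harmless for the intended application since $N\to\infty$, but it does mean $S^{\lambda}_E$ depends on $N$ for small $N$ and the lemma should be read with $N$ sufficiently large.
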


 Fix ${\pmb s}=(s_{ij})\in T(\lambda,\mathbb{C})$.
 A weight on $L=(l_1,\ldots,l_s) \in \mathcal{E}^{N}_{\lambda}$ is similarly defined via the $E$-rim decomposition of $\lambda$ as follows.
 Take $\Theta=(\theta_1,\ldots,\theta_s)\in\mathrm{Rim}^{\lambda}_{E}$ such that $\tau_E(\Theta)=\mathrm{type}(L)$.
 Then, when the $k$th northeast edge of $l_i$ lies from the $j$th row to $(j+1)$th row,
 we weight it with $\frac{1}{j^{s_{pq}}}$ where $(p,q)\in D(\lambda)$ is the $k$th component of $\theta_i$.
 Now, the weight $w^N_{{\pmb s}}(l_i)$ of the path $l_i$ is defined to be the product of weights of all northeast edges along $l_i$.
 Here, we understand that $w^{N}_{{\pmb s}}(l_i)=1$ if $\theta_i=\emptyset$.
 Moreover, we define the weight $w^N_{{\pmb s}}(L)$ of $L\in \mathcal{E}^{N}_{\lambda}$ by 
\[
 w^N_{{\pmb s}}(L)=\prod^{s}_{i=1}w^{N}_{{\pmb s}}(l_i).
\]

\begin{exam}
 Let $\lambda=(4,3,3,2)$.
 Consider the $E$-rim decomposition $\Theta\in\mathrm{Rim}^{\lambda}_E$ of $\lambda$ appeared in Example~\ref{ex:Erim}.
 It is easy to see that $\tau_E(\Theta)=(123)\in S_4$ via the following $L=(l_1,l_2,l_3,l_4)\in \mathcal{E}^{6}_{(4,3,3,2)}$;
\begin{figure}[h]
\begin{center}
 \includegraphics[clip,width=85mm]{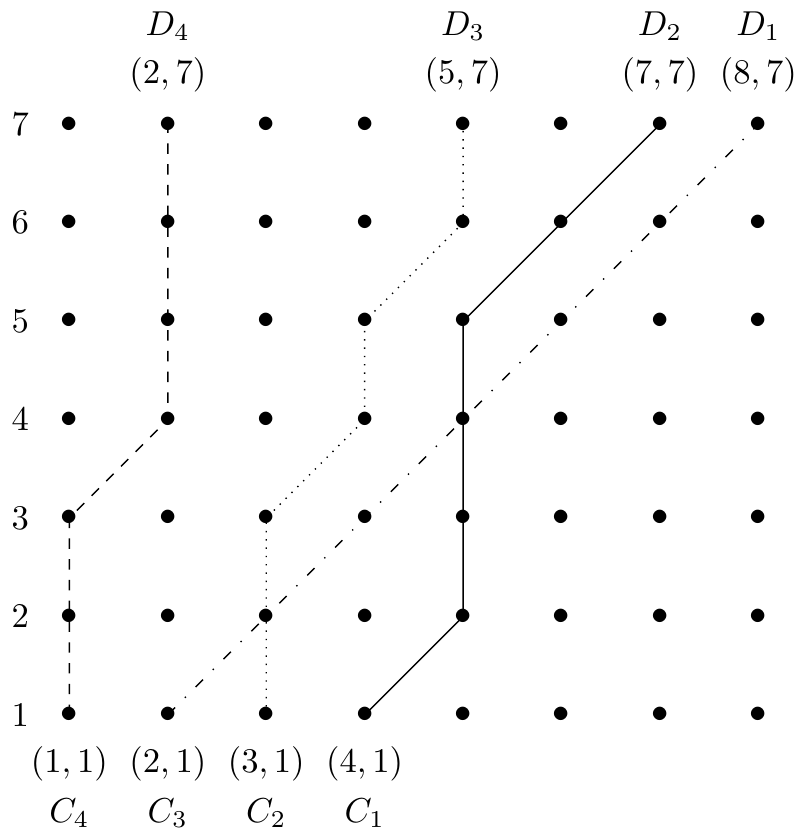}
\end{center}
\ \\[-40pt]
\caption{$L=(l_1,l_2,l_3,l_4)\in \mathcal{E}^{6}_{(4,3,3,2)}$}
\end{figure}

 Let
$
\ytableausetup{boxsize=18pt,aligntableaux=center}
{\pmb s}=\,
\begin{ytableau}
 a & b & c & d \\
 e & f & g \\
 h & i & j \\
 k & l 
\end{ytableau}
\in T((4,3,3,2),\mathbb{C})
$. 
 Then, the weight of $l_i$ are given by  
\[
 w^{4}_{{\pmb s}}(l_1)
=\frac{1}{1^a5^e6^h}, \quad
 w^{4}_{{\pmb s}}(l_2)
=\frac{1}{3^b5^f}, \quad
 w^{4}_{{\pmb s}}(l_3)
=\frac{1}{1^c2^g3^j4^i5^l6^k}, \quad
 w^{4}_{{\pmb s}}(l_4)
=\frac{1}{3^d}.
\]
 In particular, when 
$
\ytableausetup{boxsize=18pt,aligntableaux=center}
{\pmb s}=\,
\begin{ytableau}
 a_0 & a_1 & a_2 & a_3 \\
 a_{-1} & a_{0} & a_1 \\
 a_{-2} & a_{-1} & a_0 \\
 a_{-3} & a_{-2}
\end{ytableau}
\in T^{\mathrm{diag}}((4,3,3,2),\mathbb{C})$,
 these are equal to 
\[
 w^{4}_{{\pmb s}}(l_1)
=\frac{1}{1^{a_0}5^{a_{-1}}6^{a_{-2}}}, \quad
 w^{4}_{{\pmb s}}(l_2)
=\frac{1}{3^{a_1}5^{a_0}}, \quad
 w^{4}_{{\pmb s}}(l_3)
=\frac{1}{1^{a_2}2^{a_1}3^{a_0}4^{a_{-1}}5^{a_{-2}}6^{a_{-3}}}, \quad
 w^{4}_{{\pmb s}}(l_4)
=\frac{1}{3^{a_3}}.
\]
 Notice that, in this case, from the definition of the weight,
 the tuple of indexes of the exponent of the denominator of $w^{4}_{{\pmb s}}(l_i)$ along $l_i$ should be equal to
 $(a_{-1+i},a_{-1+i-1},a_{-1+i-2},\ldots)$ for all $i$. 
\end{exam}

 We similarly give a proof of \eqref{Etype} by calculating the sum 
\[
 Y^N_{\lambda}({\pmb s})
=\sum_{L\in \mathcal{E}^{N}_{\lambda}}\varepsilon_{\mathrm{type}(L)}w^{N}_{{\pmb s}}(L)
=\sum_{\sigma\in S^{\lambda}_E}
\varepsilon_{\sigma}
\sum_{L:C\to D^{\sigma}}
w^{N}_{{\pmb s}}(L),
\]

\begin{lemma} 
\label{Estep1}
 For $\sigma\in S^{\lambda}_E$,
 let $\Theta^{\sigma}=(\theta^{\sigma}_1,\ldots,\theta^{\sigma}_s)\in \mathrm{Rim}^{\lambda}_E$ be the $E$-rim decomposition
 such that $\tau_E(\Theta^{\sigma})=\sigma$.
 Then, we have  
\[ 
 \sum_{L:C\to D^{\sigma}}w^{N}_{{\pmb s}}(L)
=\prod^{s}_{i=1}\zeta^{N}\left(\theta^{\sigma}_i({\pmb s})\right).
\]
 Here, for $\Theta=(\theta_1,\ldots,\theta_s)\in\mathrm{Rim}^{\lambda}_E$, 
 $\theta_i({\pmb s})\in\mathbb{C}^{|\theta_i|}$ is the tuple
 obtained by reading contents of the shape restriction of ${\pmb s}$ to $\theta_i$ from the top right to the bottom left.
\end{lemma}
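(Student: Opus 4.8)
The plan is to run the proof of Lemma~\ref{Hstep1} essentially word for word, under the dictionary ``horizontal edge $\leftrightarrow$ northeast edge'', ``$\zeta^{N\star}\leftrightarrow\zeta^{N}$'' and ``bottom-left-to-top-right reading $\leftrightarrow$ top-right-to-bottom-left reading''. The one substantive difference is that along an $E$-pattern every step (``up'' or ``northeast'') raises the height by exactly one, so the northeast steps of a single path necessarily occur at \emph{pairwise distinct}, hence strictly increasing, heights; this is what turns the weak chains of indices that produced $\zeta^{N\star}$ in the $H$-case into the strict chains that produce $\zeta^{N}$ here.

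First I would fix $\sigma\in S^{\lambda}_E$, let $\Theta^{\sigma}=(\theta^{\sigma}_1,\ldots,\theta^{\sigma}_s)\in\mathrm{Rim}^{\lambda}_E$ be the $E$-rim decomposition with $\tau_E(\Theta^{\sigma})=\sigma$, and take an arbitrary $L=(l_1,\ldots,l_s)\in\mathcal{E}^{N}_{\lambda}$ of type $\sigma$. Set $k_i=|\theta^{\sigma}_i|$; by Lemma~\ref{lem:rimtypeE} we have $\mathrm{ned}(l_i)=\lambda'_{\sigma(i)}-\sigma(i)+i=k_i$. Writing $\theta^{\sigma}_i({\pmb s})=(s_{i,1},\ldots,s_{i,k_i})$ for the contents of $\theta^{\sigma}_i$ read from the top right to the bottom left, I would argue that the $k$th northeast step of $l_i$ goes from some row $m_k$ to row $m_k+1$, that $1\le m_1<m_2<\cdots<m_{k_i}\le N$, and that conversely each such strictly increasing sequence determines a unique path $l_i:C_i\to D_{\sigma(i)}$, the remaining $N-k_i$ steps being ``up'' steps. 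By the definition of the weight, the $k$th northeast step contributes the factor $m_k^{-s_{i,k}}$, so
\[
 w^{N}_{{\pmb s}}(l_i)=\frac{1}{m_1^{s_{i,1}}\cdots m_{k_i}^{s_{i,k_i}}}.
\]

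Next I would sum over all $L:C\to D^{\sigma}$. Since $w^{N}_{{\pmb s}}$ is multiplicative in the component paths and no non-intersection condition is imposed at this stage, the set of $E$-patterns of type $\sigma$ is the product $\prod_i\{l_i:C_i\to D_{\sigma(i)}\}$ and the sum factorizes:
\begin{align*}
 \sum_{L:C\to D^{\sigma}}w^{N}_{{\pmb s}}(L)
 &=\prod_{i=1}^{s}\sum_{l_i:C_i\to D_{\sigma(i)}}w^{N}_{{\pmb s}}(l_i)\\
 &=\prod_{i=1}^{s}\sum_{1\le m_1<\cdots<m_{k_i}\le N}\frac{1}{m_1^{s_{i,1}}\cdots m_{k_i}^{s_{i,k_i}}}
 =\prod_{i=1}^{s}\zeta^{N}(\theta^{\sigma}_i({\pmb s})),
\end{align*}
with the convention that the factor attached to an empty ribbon $\theta^{\sigma}_i=\emptyset$ equals $\zeta^{N}(\,)=1$.

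The only delicate point — and the place where the $E$-case differs in bookkeeping from the $H$-case — is to verify that, as $l_i$ is traversed upward from $C_i$, the boxes of the rim hook $\theta^{\sigma}_i$ attached successively to its northeast steps are listed in exactly the top-right-to-bottom-left order used to define $\theta^{\sigma}_i({\pmb s})$, so that the exponents $s_{i,1},\ldots,s_{i,k_i}$ enter $\zeta^{N}$ in the correct order. I expect this to be the main, though routine, obstacle; it follows from the explicit construction of the bijection $\tau_E$ in Lemma~\ref{lem:rimtypeE} together with the fact that moving one step upward along a northeast/up path corresponds to moving one box down-and-to-the-left along the associated ribbon, exactly as in the argument preceding Lemma~\ref{Hstep1}.
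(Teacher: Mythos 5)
Your proof is correct and is exactly the argument the paper intends: the authors explicitly omit all proofs in this subsection as "essentially the same" as the $H$-case, and your write-up is that analogue, with the one genuine point of difference (every step of an $E$-pattern raises the height by one, so the northeast steps occur at strictly increasing rows, turning $\zeta^{N\star}$ into $\zeta^{N}$) correctly identified. The ordering issue you flag at the end is in fact built into the definition of the weight, which pairs the $k$th northeast edge with the $k$th component of $\theta_i$ in the stated reading order, so no further verification is needed.
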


 From Lemma~\ref{Estep1}, we have 
\begin{equation}
\label{for:YN}
 Y^N_{\lambda}({\pmb s})
=\sum_{\sigma\in S^{\lambda}_E}
\varepsilon_{\sigma}
 \prod^{s}_{i=1}\zeta^{N}\left(\theta^{\sigma}_i({\pmb s})\right).
\end{equation}
 Define $\mathcal{E}^{N}_{\lambda,0}$ similarly to $\mathcal{H}^{N}_{\lambda,0}$
 and also $ Y^N_{\lambda,0}({\pmb s})$ and $Y^N_{\lambda,1}({\pmb s})$.
 It holds that 
\[
 Y^N_{\lambda,0}({\pmb s})
=\sum_{L\in \mathcal{E}^{N}_{\lambda,0}}w^{N}_{{\pmb s}}(L)
=\zeta^{N}_{\lambda}({\pmb s}).
\]
 Hence, from \eqref{for:YN},
 we reach the expression  
\begin{align}
\label{for:Ykeyexpression}
 \zeta^{N}_{\lambda}({\pmb s})
&=\sum_{\sigma\in S^{\lambda}_E}
\varepsilon_{\sigma}
\prod^{s}_{i=1}
\zeta^{N}\left(\theta^{\sigma}_i({\pmb s})\right)
-Y^N_{\lambda,1}({\pmb s}).
\end{align}

 Now, \eqref{Etype} is obtained by taking the limit $N\to\infty$ of \eqref{for:Ykeyexpression}
 under suitable assumptions on ${\pmb s}$ described in Theorem~\ref{JT} together with the following proposition.

\begin{prop}
\label{prop:YY1}
 Assume that ${\pmb s}=(s_{ij})\in T^{\mathrm{diag}}(\lambda,\mathbb{C})$.
 Write $a_{k}=s_{i,i+k}$ for $k\in\mathbb{Z}$.
\begin{itemize}
\item[$(1)$] 
 We have 
\[
 Y^{N}_{\lambda}({\pmb s})
=\det
\left[
 \zeta^{N}(a_{j-1},a_{j-2},\ldots,a_{j-(\lambda'_i-i+j)})
\right]_{1\le i,j\le s}.
\]
\item[$(2)$]
 It holds that
\[
 Y^N_{\lambda,1}({\pmb s})=0.
\]
\end{itemize}
\end{prop}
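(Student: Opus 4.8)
The plan is to transcribe, mutatis mutandis, the proof of Proposition~\ref{prop:XX1}, with $H$-patterns replaced by $E$-patterns and horizontal edges by northeast edges. For part~(1), I would begin from \eqref{for:YN}. When ${\pmb s}=(s_{ij})\in T^{\mathrm{diag}}(\lambda,\mathbb{C})$, the remark following the $E$-weight example shows that for $\sigma\in S^{\lambda}_E$ one has $\theta^{\sigma}_i({\pmb s})=(a_{i-1},a_{i-2},\ldots,a_{i-(\lambda'_{\sigma(i)}-\sigma(i)+i)})$, so that each factor depends only on $i$ and on $\mathrm{ned}(l_i)=\lambda'_{\sigma(i)}-\sigma(i)+i$. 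Adopting the conventions $\zeta^{N}(\,\cdots)=1$ when $\lambda'_{\sigma(i)}-\sigma(i)+i=0$ and $\zeta^{N}(\,\cdots)=0$ when it is negative, the sum over $S^{\lambda}_E$ in \eqref{for:YN} extends to a sum over all of $S_s$ without changing its value, and recognizing the result as a Leibniz expansion yields
\[
 Y^{N}_{\lambda}({\pmb s})
=\det\left[\zeta^{N}(a_{i-1},a_{i-2},\ldots,a_{i-(\lambda'_{j}-j+i)})\right]_{1\le i,j\le s}.
\]
Transposing the matrix, i.e.\ interchanging the roles of $i$ and $j$, gives the asserted formula.

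For part~(2), I would use the standard sign-reversing involution $L\mapsto\overline{L}$ on $\mathcal{E}^{N}_{\lambda}\setminus\mathcal{E}^{N}_{\lambda,0}$: locate the first intersection point of $L$, at which two paths $l_i$ and $l_j$ meet, and let $\overline{L}$ be obtained from $L$ by exchanging the tails of $l_i$ and $l_j$ beyond that point. Exactly as in the $H$-case, $\mathrm{type}(\overline{L})$ differs from $\mathrm{type}(L)$ by a transposition, hence $\varepsilon_{\mathrm{type}(\overline{L})}=-\varepsilon_{\mathrm{type}(L)}$. It then remains to check that $w^{N}_{{\pmb s}}(\overline{L})=w^{N}_{{\pmb s}}(L)$ when ${\pmb s}\in T^{\mathrm{diag}}(\lambda,\mathbb{C})$; this holds because, under the diagonal hypothesis, the weight attached to an individual northeast edge depends only on its physical position in the lattice (the rows it connects and its column) and not on which path — equivalently, which $E$-rim ribbon — it belongs to. Concretely, one computes, as in the $H$-case, that the northeast edge of $l_i$ lying in column $c$ carries an index that depends only on $c$ and $s$, not on $i$. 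Since exchanging tails merely redistributes the same collection of northeast edges between the two paths, the total weight is unchanged, and therefore $Y^{N}_{\lambda,1}({\pmb s})=-Y^{N}_{\lambda,1}({\pmb s})=0$.

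The bulk of the work is purely combinatorial bookkeeping, and no genuinely new idea is needed beyond the $H$-type argument; indeed, the whole point of the preceding subsection's parallel development is that the two cases differ only by the edge directions and the ambient lattice dimensions. The one point deserving care — and the place where the hypothesis ${\pmb s}\in T^{\mathrm{diag}}(\lambda,\mathbb{C})$ is essential — is the claim in part~(2) that the weight of a northeast edge is path-independent; once this is pinned down, both the determinant identity and the vanishing of $Y^{N}_{\lambda,1}$ follow formally. Without the diagonal assumption the involution no longer preserves weights, which is precisely the source of the extra terms discussed in Remark~\ref{rmk:ErrorTerms}.
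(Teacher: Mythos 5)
Your proposal is correct and is exactly the argument the paper intends: the paper omits this proof on the grounds that it is the verbatim transcription of the $H$-type argument (Proposition~\ref{prop:XX1}) with $E$-patterns and northeast edges in place of $H$-patterns and horizontal edges, which is precisely what you carry out. Your identification of the one point where the diagonal hypothesis enters --- that the index $a_{i-k}$ attached to a northeast edge depends only on the edge's lattice column, so the tail-swapping involution preserves weights --- is the correct crux, matching the role it plays in the proof of \eqref{for:X1}.
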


\begin{rmk}
 When ${\pmb s}\in T^{\mathrm{diag}}(\lambda,\mathbb{C})$,
 \eqref{for:Ykeyexpression} can be also written in terms of the $E$-rim decomposition as follows;
\begin{equation}
\label{for:keyexpressionErim}
 \zeta^{N}_{\lambda}({\pmb s})
=\sum_{\Theta=(\theta_1,\theta_2,\ldots,\theta_s)\in \mathrm{Rim}^{\lambda}_E}
\varepsilon_E(\Theta)
\zeta^{N}\left(\theta_1({\pmb s})\right)\zeta^{N}\left(\theta_2({\pmb s})\right)\cdots \zeta^{N}\left(\theta_s({\pmb s})\right),
\end{equation}
 where $\varepsilon_E(\Theta)=\varepsilon_{\tau_E(\Theta)}$.
 Note that $\varepsilon_E(\Theta)=(-1)^{n-\#\{i\,|\,\theta_i\ne \emptyset\}}$ when $\lambda=(n)$. 
\end{rmk}


\section{Schur multiple zeta functions as variations of Schur functions}
\label{sec:Macdonald}

\subsection{Schur multiple zeta functions of skew type}

 Our SMZFs are naturally extended to those of skew type as follows.
 Let $\lambda$ and $\mu$ be partitions satisfying $\mu\subset \lambda$.
 We use the same notations $T(\lambda/\mu,X),T^{\mathrm{diag}}(\lambda/\mu,X)$ for a set $X$,
 $\mathrm{SSYT}(\lambda/\mu)$ and $\mathrm{SSYT}_N(\lambda/\mu)$ for a positive integer $N\in\mathbb{N}$
 as the previous sections.

 Let ${\pmb s}=(s_{ij})\in T(\lambda/\mu,\mathbb{C})$.
 We define a skew SMZF associated with $\lambda/\mu$ by
\begin{equation}
\label{def:skewSMZ}
 \zeta_{\lambda/\mu}({\pmb s})
=\sum_{M\in \mathrm{SSYT}(\lambda/\mu)}\frac{1}{M^{\pmb s}}
\end{equation}
 and its truncated sum   
\[
 \zeta^{N}_{\lambda/\mu}({\pmb s})
=\sum_{M\in \mathrm{SSYT}_N(\lambda/\mu)}\frac{1}{M^{\pmb s}},
\]
 where $M^{\pmb s}=\prod_{(i,j)\in D(\lambda/\mu)}m_{ij}^{s_{ij}}$ for $M=(m_{ij})\in\mathrm{SSYT}(\lambda/\mu)$.
 As we have seen in Lemma~\ref{lem:convergence}, 
 the series \eqref{def:skewSMZ} converges absolutely if ${\pmb s}\in W_{\lambda/\mu}$ where $W_{\lambda/\mu}$ is also similarly defined as $W_{\lambda}$
 (note that $C(\lambda/\mu)\subset C(\lambda)$).
 We have again  
\begin{equation}
\label{for:SchurtoMZVskew}
 \zeta_{\lambda/\mu}({\pmb s})
=\sum_{{\pmb t} \,\preceq\, {\pmb s}}\zeta({\pmb t}), \quad
 \zeta_{\lambda/\mu}({\pmb s})
=\sum_{{\pmb t} \,\preceq\, {\pmb s}'}(-1)^{|\lambda/\mu|-\ell({\pmb t})}\zeta^{\star}({\pmb t}),
\end{equation} 
 where $\preceq$ is naturally generalized to the skew types.

\begin{exam}
\label{ex:skewSMZ}
\begin{itemize}
\item[(1)]
 For ${\pmb s}=(s_{ij})\in W_{(2,2,2)/(1,1)}$, we have 
\begin{align*}
\ytableausetup{boxsize=normal,aligntableaux=center}
\begin{ytableau}
 \none & s_{12} \\
 \none & s_{22} \\
 s_{31} & s_{32}
\end{ytableau}
\, 
&=\zeta(s_{31},s_{12},s_{22},s_{32})+\zeta(s_{31}+s_{12},s_{22},s_{32})+\zeta(s_{12},s_{31}+s_{22},s_{32}), \\
&\ \ \ +\zeta(s_{12},s_{31},s_{22},s_{32})+\zeta(s_{12},s_{22},s_{31}+s_{32})+\zeta(s_{12},s_{22},s_{31},s_{32})\\
&=\zeta^{\star}(s_{31},s_{12},s_{22},s_{32})-\zeta^{\star}(s_{31}+s_{12},s_{22},s_{32})-\zeta^{\star}(s_{31},s_{12}+s_{22},s_{32})\\
&\ \ \ -\zeta^{\star}(s_{31},s_{12},s_{22}+s_{32})+\zeta^{\star}(s_{31}+s_{12}+s_{22},s_{32})+\zeta^{\star}(s_{31}+s_{12},s_{22}+s_{32})\\
&\ \ \ +\zeta^{\star}(s_{31},s_{12}+s_{22}+s_{32})+\zeta^{\star}(s_{12},s_{31},s_{22},s_{32})-\zeta^{\star}(s_{12},s_{31},s_{22}+s_{32})\\
&\ \ \ +\zeta^{\star}(s_{12},s_{22},s_{31},s_{32})-\zeta^{\star}(s_{12}+s_{22},s_{31},s_{32})-\zeta^{\star}(s_{12},s_{22}+s_{31},s_{32}).
\end{align*} 
\item[(2)]
 For ${\pmb s}=(s_{ij})\in W_{(3,3)/(2)}$, we have 
\begin{align*}
\ytableausetup{boxsize=normal,aligntableaux=center}
\begin{ytableau}
 \none & \none & s_{13} \\
 s_{21} & s_{22} & s_{23} 
\end{ytableau}
\,
&=\zeta(s_{13},s_{21},s_{22},s_{23})+\zeta(s_{13}+s_{21},s_{22},s_{23})+\zeta(s_{13},s_{21}+s_{22},s_{23})\\
&\ \ \ +\zeta(s_{13},s_{21},s_{22}+s_{23})+\zeta(s_{13}+s_{21}+s_{22},s_{23})+\zeta(s_{13}+s_{21},s_{22}+s_{23})\\
&\ \ \ +\zeta(s_{13},s_{21}+s_{22}+s_{23})+\zeta(s_{21},s_{13},s_{22},s_{23})+\zeta(s_{21},s_{13}+s_{22},s_{23})\\
&\ \ \ +\zeta(s_{21},s_{13},s_{22}+s_{23})+\zeta(s_{21},s_{22},s_{13},s_{23})+\zeta(s_{21}+s_{22},s_{13},s_{23})\\
&=\zeta^{\star}(s_{13},s_{21},s_{22},s_{23})-\zeta^{\star}(s_{13}+s_{21},s_{22},s_{23})-\zeta^{\star}(s_{21},s_{13}+s_{22},s_{23}), \\
&\ \ \ +\zeta^{\star}(s_{21},s_{13},s_{22},s_{23})-\zeta^{\star}(s_{21},s_{22},s_{23}+s_{13})+\zeta^{\star}(s_{21},s_{22},s_{13},s_{23}).
\end{align*} 
\end{itemize}
\end{exam}

 As in Section~\ref{subsec:special},
 one sees that $\zeta_{\lambda/\mu}(\{s\}^{\lambda/\mu})=e^{(s)}s_{\lambda/\mu}=s_{\lambda/\mu}(1^{-s},2^{-s},\ldots)$ for $s\in\mathbb{C}$ with $\Re(s)>1$
 where $s_{\lambda/\mu}$ is the skew Schur function associated with $\lambda/\mu$ (see \cite{Mac}).
 In particular, since $s_{\lambda/\mu}$ is a symmetric function and hence can be expressed as a linear combination of the power-sum symmetric functions,
 we have $\zeta_{\lambda/\mu}(\{2k\}^{\lambda/\mu})\in\mathbb{Q}\pi^{2k(|\lambda|-|\mu|)}$ for $k\in\mathbb{N}$.
 Notice that it is shown in \cite{Sta} that
 $\zeta_{\lambda/\mu}(\{2k\}^{\lambda/\mu})$ for a special choice of $\lambda/\mu$ with $k=1,2,3$
 is involved with $f^{\lambda/\mu}$, the number of standard Young tableaux of shape $\lambda/\mu$.

\subsection{Macdonald's ninth variation of Schur functions}

 Let $W^{\mathrm{diag}}_{\lambda/\mu}=W_{\lambda/\mu}\cap T^{\mathrm{diag}}(\lambda/\mu,\mathbb{C})$.
 We now show that, when ${\pmb s}\in W^{\mathrm{diag}}_{\lambda/\mu}$,
 the skew SMZF $\zeta_{\lambda/\mu}({\pmb s})$ is realized
 as (the limit of) a specialization of the ninth variation of skew Schur functions
 studied by Nakagawa, Noumi, Shirakawa and Yamada \cite{NNSY}.
 As in the previous discussion,
 we write $a_{k}=s_{i,i+k}$ for $k\in\mathbb{Z}$ (and for any $i\in\mathbb{N}$)
 for ${\pmb s}=(s_{ij})\in W^{\mathrm{diag}}_{\lambda/\mu}$.

 Let $r$ and $s$ be positive integers.
 Put $\eta=r+s$. Let $\lambda=(\lambda_1,\ldots,\lambda_r)$ and $\mu=(\mu_1,\ldots,\mu_r)$
 be partitions satisfying $\mu\subset \lambda\subset (s^r)$ (we here allow $\lambda_i=0$ or $\mu_i=0$) and
 $J=\{j_1,j_2,\ldots,j_r\}$ with $j_{a}=\lambda_{r+1-a}+a$ and
 $I=\{i_1,i_2,\ldots,i_r\}$ with $i_{b}=\mu_{r+1-b}+b$
 the corresponding Maya diagrams, respectively.
 Notice that $I$ and $J$ are subsets of $\{1,2,\ldots,\eta\}$
 satisfying $j_1<j_2<\cdots<j_r$ and $i_1<i_2<\cdots<i_r$. 
 Then, Macdonald's ninth variation of skew Schur function $S^{(r)}_{\lambda/\mu}(X)$ 
 associated with a general matrix $X=[x_{ij}]_{1\le i,j\le \eta}$ of size $\eta$ is defined by
\[
 S^{(r)}_{\lambda/\mu}(X)=\xi^{I}_{J}(X_{+}).
\]
 Here, we have used the Gauss decomposition $X=X_{-}X_{0}X_{+}$ of $X$
 where $X_{-},X_0$ and $X_{+}$ are lower unitriangular, diagonal and upper unitriangular matrices, respectively,
 which are determined uniquely as matrices with entries in the field of rational functions
 in the variables $x_{ij}$ for $1\le i,j\le \eta$.
 Moreover,
 $\xi^{I}_{J}(X_{+})$ is the minor determinant of $X_{+}$ corresponding to $I$ and $J$.
 Put 
\begin{align*}
 e^{(r)}_n(X)
&=S^{(r)}_{(1^n)}(X)=\xi^{1,\ldots,r}_{1,\ldots,r-1,r+n}(X_{+}),\\
 h^{(r)}_n(X)
&=S^{(r)}_{(n)}(X)=\xi^{1,\ldots,r}_{1,\ldots,\widehat{r-n+1},r+1}(X_{+}),
\end{align*}
 which are variations of the elementary and complete symmetric polynomials, respectively.
 Here $\widehat{r-n+1}$ means that we ignore $r-n+1$.
 For convenience, we put $e^{(r)}_0(X)=h^{(r)}_0(X)=1$ and $e^{(r)}_n(X)=h^{(r)}_n(X)=0$ for $n<0$.
 
 For $N\in\mathbb{N}$,
 let $U=U^{(N)}$ be an upper unitriangular matrix of size $\eta$ defined by $U=U_1U_2\cdots U_N$
 where  
\[
 U_k
=\left(I_{\eta}+u^{(1)}_k E_{12}\right)\left(I_{\eta}+u^{(2)}_k E_{23}\right)
\cdots \left(I_{\eta}+u^{(\eta-1)}_k E_{\eta-1,\eta}\right).
\]
 Here, $u^{(i)}_k$ are variables for $1\le k\le N$ and $1\le i\le \eta-1$ and
 $I_{\eta}$ and $E_{ij}$ are the identity and unit matrix of size $\eta$, respectively. 
 The following is crucial in this section.
 
\begin{lemma}
 Let ${\pmb s}=(s_{ij})\in T^{\mathrm{diag}}(\lambda/\mu,\mathbb{C})$.
 Write $a_{k}=s_{i,i+k}$ for $k\in\mathbb{Z}$.
 If $u^{(i)}_k=k^{-a_{i-r}}$, then we have 
\begin{equation}
\label{for:sMZVisS}
 \zeta^{N}_{\lambda/\mu}({\pmb s})=S^{(r)}_{\lambda/\mu}(U).
\end{equation}
\end{lemma}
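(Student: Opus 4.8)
The plan is to reduce the identity \eqref{for:sMZVisS} to the tableau representation of Macdonald's ninth variation proved in \cite{NNSY}. The first step is a trivial but useful observation: since $U=U_1U_2\cdots U_N$ is already upper unitriangular, its Gauss decomposition is $U=I_\eta\cdot I_\eta\cdot U$, so $U_-=U_0=I_\eta$ and $U_+=U$, whence $S^{(r)}_{\lambda/\mu}(U)=\xi^{I}_{J}(U_+)=\xi^{I}_{J}(U)$ is literally the minor determinant of $U$ on the rows indexed by $I$ and the columns indexed by $J$. The second step is to make this minor explicit. Each elementary factor $I_\eta+u^{(i)}_kE_{i,i+1}$ is bidiagonal, so $(U_k)_{ab}=\prod_{i=a}^{b-1}u^{(i)}_k$ for $a\le b$ and $0$ for $a>b$; taking the product over $k$ exhibits $U$ as the path matrix of a planar acyclic network obtained by stacking $N$ copies of the $\eta$-wire ``brick wall'', the brick at time $k$ and height $i$ carrying the weight $u^{(i)}_k$.

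Next I would apply the Lindström--Gessel--Viennot lemma to this network with boundary points indexed by $I$ and $J$ (in the Maya-diagram labelling fixed before the Lemma, $j_a=\lambda_{r+1-a}+a$ and $i_b=\mu_{r+1-b}+b$). The signed sum defining $\xi^{I}_{J}(U)$ then collapses, as usual, to the unsigned sum over \emph{non-intersecting} families of paths, and the standard identification between such families and $\mathrm{SSYT}_N(\lambda/\mu)$ --- the same device used in Section~3 to evaluate $X^{N}_{\lambda,0}({\pmb s})$, see \cite{Sagan2001} --- yields
\[
 S^{(r)}_{\lambda/\mu}(U)
=\sum_{M=(m_{pq})\in\mathrm{SSYT}_N(\lambda/\mu)}\ \prod_{(p,q)\in D(\lambda/\mu)}u^{(r+q-p)}_{m_{pq}}.
\]
This is exactly the tableau representation of \cite{NNSY} applied to $U$. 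The bookkeeping point to check carefully is that the box in position $(p,q)$, which lies on the diagonal of content $q-p$, picks up the variable with superscript $r+q-p$ (so boxes on a common diagonal share one family $u^{(\bullet)}_\bullet$), and that as $(p,q)$ ranges over $D(\lambda/\mu)\subset D((s^r))$ the index $r+q-p$ ranges over $\{1,\dots,\eta-1\}$, matching the range of the available variables.

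Finally I would substitute $u^{(i)}_k=k^{-a_{i-r}}$. For a box $(p,q)$ this turns the factor into $u^{(r+q-p)}_{m_{pq}}=m_{pq}^{-a_{(r+q-p)-r}}=m_{pq}^{-a_{q-p}}=m_{pq}^{-s_{pq}}$, where the last equality is the hypothesis ${\pmb s}=(s_{ij})\in T^{\mathrm{diag}}(\lambda/\mu,\mathbb{C})$, i.e. $s_{pq}=a_{q-p}$. Hence the weight of $M$ equals $1/M^{\pmb s}=\prod_{(p,q)}m_{pq}^{-s_{pq}}$, and summing over $M\in\mathrm{SSYT}_N(\lambda/\mu)$ gives $S^{(r)}_{\lambda/\mu}(U)=\sum_{M\in\mathrm{SSYT}_N(\lambda/\mu)}1/M^{\pmb s}=\zeta^{N}_{\lambda/\mu}({\pmb s})$, as desired.

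The main obstacle is the index matching in the second step: reconciling the Maya-diagram and matrix conventions of \cite{NNSY} (the labelling of $I,J$ and the orientation of the network) with the diagonal bookkeeping $s_{pq}=a_{q-p}$, so as to confirm that the exponent attached to the box $(p,q)$ is $a_{q-p}$ and not $a_{p-q}$ or a shifted variant; the degenerate cases $\lambda_i=0$ or $\mu_i=0$ allowed in the setup should also be noted, but they are harmless since they merely delete wires/boundary points from the network. Once the indexing is pinned down, the rest is the routine planar-network computation.
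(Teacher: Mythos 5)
Your proof is correct and follows essentially the same route as the paper's: both reduce the identity to the tableau representation $S^{(r)}_{\lambda/\mu}(U)=\sum_{M\in\mathrm{SSYT}_N(\lambda/\mu)}\prod_{(i,j)\in D(\lambda/\mu)}u^{(r-i+j)}_{m_{ij}}$ and then substitute $u^{(i)}_k=k^{-a_{i-r}}$ to turn each box weight into $m_{ij}^{-s_{ij}}$. The only difference is that the paper quotes this representation directly from \cite{NNSY}, whereas you re-derive it via the Lindstr\"om--Gessel--Viennot argument on the planar network attached to $U_1\cdots U_N$; that extra step is sound and makes the argument more self-contained, but it is not a different method.
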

\begin{proof}
 It is shown in \cite{NNSY} that $S^{(r)}_{\lambda/\mu}(U)$ has a tableau representation
\begin{equation}
\label{for:TableauRepresentation}
 S^{(r)}_{\lambda/\mu}(U)
=\sum_{(m_{ij})\in \mathrm{SSYT}_N(\lambda/\mu)}\prod_{(i,j)\in D(\lambda/\mu)}u^{(r-i+j)}_{m_{ij}}.
\end{equation}
 Hence the claim immediately follows because $u^{(r-i+j)}_{m_{ij}}=m^{-a_{j-i}}_{ij}=m^{-s_{ij}}_{ij}$
 if $u^{(i)}_k=k^{-a_{i-r}}$.
\end{proof}

 As corollaries of the results in \cite{NNSY},
 we obtain the following formulas for skew SMZFs.

\subsection{Jacobi-Trudi formulas}

 It is shown in \cite{NNSY} that 
 $S^{(r)}_{\lambda/\mu}(X)$ satisfies the Jacobi-Trudi formulas
\begin{align}
\label{for:JTh}
 S^{(r)}_{\lambda/\mu}(X)
&=\det\left[h^{(\mu_j+r-j+1)}_{\lambda_i-\mu_j-i+j}(X)\right]_{1\le i,j\le r},\\
\label{for:JTe}
 S^{(r)}_{\lambda/\mu}(X)
&=\det\left[e^{(r-1-\mu'_j+j)}_{\lambda'_i-\mu'_j-i+j}(X)\right]_{1\le i,j\le s},
\end{align}
 where $\lambda'=(\lambda'_1,\ldots,\lambda'_s)$ and $\mu'=(\mu'_1,\ldots,\mu'_s)$ are the conjugates of $\lambda$ and $\mu$, respectively
 (we again allow $\lambda'_i=0$ or $\mu'_i=0$). 
 
\begin{thm}
 Retain the above notations.
 Assume that ${\pmb s}=(s_{ij})\in W^{\mathrm{diag}}_{\lambda/\mu}$. 
\begin{itemize}
\item[$(1)$] 
 Assume further that $\Re(s_{i, \lambda_i})>1$ for all $1\le i \le r$.
 Then, we have
\begin{equation}
\label{skewHtype}
 \zeta_{\lambda/\mu}({\pmb s})
=\det\left[\zeta^{\star}(a_{\mu_j-j+1},a_{\mu_j-j+2},\ldots,a_{\mu_j-j+(\lambda_{i}-\mu_j-i+j)})\right]_{1\le i,j\le r\,}.
\end{equation}
 Here, we understand that $\zeta^{\star}(\,\cdots)=1$ if $\lambda_i-\mu_j-i+j=0$ and $0$ if $\lambda_i-\mu_j-i+j<0$.
\item[$(2)$]
 Assume further that $\Re(s_{\lambda'_i, i})>1$ for all $1\le i \le s$.
 Then, we have
\begin{equation}
\label{skewEtype}
 \zeta_{\lambda/\mu}({\pmb s})
=\det\left[\zeta(a_{-\mu'_j+j-1},a_{-\mu'_j+j-2},\ldots,a_{-\mu'_j+j-(\lambda'_{i}-\mu'_j-i+j)})\right]_{1\le i,j\le s}.
\end{equation}
 Here, we understand that $\zeta(\,\cdots)=1$ if $\lambda'_i-\mu'_j-i+j=0$ and $0$ if $\lambda'_i-\mu'_j-i+j<0$.
\end{itemize}
\end{thm}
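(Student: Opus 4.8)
The plan is to deduce both identities directly from the Jacobi-Trudi formulas \eqref{for:JTh} and \eqref{for:JTe} for Macdonald's ninth variation, using the identification \eqref{for:sMZVisS} of the truncated skew SMZF with a specialization of $S^{(r)}_{\lambda/\mu}$. First I would fix $N\in\mathbb{N}$, take the upper unitriangular matrix $U=U^{(N)}$ built from the variables $u^{(i)}_k$, and make the specialization $u^{(i)}_k=k^{-a_{i-r}}$, so that $\zeta^{N}_{\lambda/\mu}({\pmb s})=S^{(r)}_{\lambda/\mu}(U)$ by the lemma preceding \eqref{for:sMZVisS}. Applying \eqref{for:JTh} with $X=U$ then writes $\zeta^{N}_{\lambda/\mu}({\pmb s})$ as the determinant of the entries $h^{(\mu_j+r-j+1)}_{\lambda_i-\mu_j-i+j}(U)$, and \eqref{for:JTe} writes it as the determinant of the entries $e^{(r-1-\mu'_j+j)}_{\lambda'_i-\mu'_j-i+j}(U)$.

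Second, I would compute these matrix entries explicitly via the tableau representation \eqref{for:TableauRepresentation}. For the one-row shape $(n)$ one gets $h^{(r')}_n(U)=\sum_{1\le m_1\le\cdots\le m_n\le N}\prod_{p=1}^{n}u^{(r'-1+p)}_{m_p}$, which after the specialization becomes $\sum_{1\le m_1\le\cdots\le m_n\le N}\prod_{p=1}^{n}m_p^{-a_{r'-r-1+p}}=\zeta^{N\star}(a_{r'-r},a_{r'-r+1},\ldots,a_{r'-r-1+n})$; taking $r'=\mu_j+r-j+1$ and $n=\lambda_i-\mu_j-i+j$ yields exactly the $(i,j)$-entry of the determinant in \eqref{skewHtype}, with the last argument equal to $a_{\lambda_i-i}=s_{i,\lambda_i}$. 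Likewise, for the one-column shape $(1^n)$ the tableau representation gives $e^{(r')}_n(U)=\sum_{1\le m_1<\cdots<m_n\le N}\prod_{p=1}^{n}m_p^{-a_{r'-r+1-p}}=\zeta^{N}(a_{r'-r},a_{r'-r-1},\ldots,a_{r'-r+1-n})$, and with $r'=r-1-\mu'_j+j$ and $n=\lambda'_i-\mu'_j-i+j$ this is the $(i,j)$-entry of \eqref{skewEtype}, whose last argument is $a_{i-\lambda'_i}=s_{\lambda'_i,i}$. The conventions $h^{(r')}_0=1$, $h^{(r')}_n=0$ for $n<0$ (and similarly for $e$) match the stipulated conventions $\zeta^{\star}(\cdots)=1$, $=0$ in the statement.

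Third, I would let $N\to\infty$. The left-hand side $\zeta^{N}_{\lambda/\mu}({\pmb s})$ tends to $\zeta_{\lambda/\mu}({\pmb s})$ because ${\pmb s}\in W_{\lambda/\mu}$ guarantees absolute convergence. On the right-hand side each nonzero matrix entry is a truncated MZSF (resp. MZF) whose arguments are entries of ${\pmb s}$, hence of real part $\ge 1$ since ${\pmb s}\in W^{\mathrm{diag}}_{\lambda/\mu}$, while its last argument equals $s_{i,\lambda_i}$ throughout the $i$-th row of the $H$-type determinant (resp. $s_{\lambda'_i,i}$ throughout the $i$-th column of the $E$-type one) and therefore has real part strictly greater than $1$ by the additional hypothesis; so each entry converges to the corresponding genuine $\zeta^{\star}$ (resp. $\zeta$). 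Since a determinant is a polynomial in its entries, it converges to the determinant of the limits, which gives \eqref{skewHtype} and \eqref{skewEtype}. The main point requiring care is precisely this interchange of limit and determinant, which reduces to the entrywise convergence just described; the bookkeeping of checking that every relevant index $a_k$ falls where the hypotheses control it — in particular that the last argument of each entry in a given row (resp. column) depends only on the row index through $s_{i,\lambda_i}$ (resp. on the column index through $s_{\lambda'_i,i}$) — is the step I expect to be the most delicate.
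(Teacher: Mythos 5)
Your proposal is correct and follows essentially the same route as the paper: specialize $u^{(i)}_k=k^{-a_{i-r}}$ so that \eqref{for:sMZVisS} identifies $\zeta^N_{\lambda/\mu}({\pmb s})$ with $S^{(r)}_{\lambda/\mu}(U)$, evaluate the entries of \eqref{for:JTh} and \eqref{for:JTe} via \eqref{for:TableauRepresentation} as truncated $\zeta^{N\star}$ and $\zeta^N$ values, and let $N\to\infty$. Your index bookkeeping matches the paper's (only note that the last argument $s_{\lambda'_i,i}$ of the $E$-type entries is constant along the $i$-th \emph{row}, not column, of that determinant — a harmless slip), and your explicit justification of the entrywise limit is a point the paper leaves implicit.
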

\begin{proof}
 From \eqref{for:TableauRepresentation}, we have
\begin{align*}
 e^{(r)}_{n}(U)
&=\sum_{m_{1}<m_2<\cdots < m_{n}\le N}u^{(r)}_{m_{1}}u^{(r-1)}_{m_{2}}\cdots u^{(r-n+1)}_{m_{n}},\\[5pt]
\label{for:hU}
 h^{(r)}_{n}(U)
&=\sum_{m_{1}\le m_2\le \cdots \le m_{n}\le N}u^{(r)}_{m_{1}}u^{(r+1)}_{m_{2}}\cdots u^{(r+n-1)}_{m_{n}}.
\end{align*}
 Now, write $r'=\mu_j+r-j+1$ and $k'=\lambda_i-\mu_j-i+j$ for simplicity. 
 Then, we have $u^{(r'+i-1)}_{m_{i}}=m_i^{-a_{\mu_j-j+i}}$ if $u^{(i)}_{k}=k^{-a_{i-r}}$
 and hence   
\begin{align*}
 h^{(\mu_j+r-j+1)}_{\lambda_i-\mu_j-i+j}(U)
&=\sum_{m_{1}\le m_2\le \cdots \le m_{k'}\le N}u^{(r')}_{m_{1}}u^{(r'+1)}_{m_{2}}\cdots u^{(r'+k'-1)}_{m_{k'}}\\
&=\sum_{m_{1}\le m_2\le \cdots \le m_{k'}\le N}m_{1}^{-a_{\mu_j-j+1}}m_{2}^{-a_{\mu_j-j+2}}\cdots m_{k'}^{-a_{\mu_j-j+k'}}\\
&=\zeta^{N\star}(a_{\mu_j-j+1},a_{\mu_j-j+2},\ldots,a_{\mu_j-j+(\lambda_i-\mu_j-i+j)}).
\end{align*}
 This shows that \eqref{skewHtype} follows from \eqref{for:JTh} by letting $N\to\infty$.
 Similarly, \eqref{for:JTe} is obtained from \eqref{skewEtype} via the expression 
\begin{align*}
 e^{(r-1-\mu'_j+j)}_{\lambda'_i-\mu'_j-i+j}(U)
&=\zeta^{N}(a_{-\mu'_j+j-1},a_{-\mu'_j+j-2},\ldots,a_{-\mu'_j+j-(\lambda'_{i}-\mu'_j-i+j)}).
\end{align*}
\end{proof}

\begin{exam}
 When $\lambda/\mu=(4,3,2)/(2,1)$, we have 
\begin{align*} 
\ytableausetup{boxsize=18pt,aligntableaux=center}
\begin{ytableau}
  \none & \none & a_2 & a_3 \\
  \none &   a_0 & a_1 \\
 a_{-2} & a_{-1}
\end{ytableau}
&\,= 
\left|
\begin{array}{ccc}
 \zeta^{\star}(a_{2},a_{3}) & \zeta^{\star}(a_{0},a_{1},a_{2},a_{3}) & \zeta^{\star}(a_{-2},a_{-1},a_{0},a_{1},a_{2},a_{3}) \\
 1 & \zeta^{\star}(a_{0},a_{1}) & \zeta^{\star}(a_{-2},a_{-1},a_{0},a_{1}) \\
 0 & 1 & \zeta^{\star}(a_{-2},a_{-1}) 
\end{array}
\right|,\\
\begin{ytableau}
  \none & \none & a_2 & a_3 \\
  \none &   a_0 & a_1 \\
 a_{-2} & a_{-1}
\end{ytableau}
&\,=
\left|
\begin{array}{cccc}
 \zeta(a_{-2}) & \zeta(a_{0},a_{-1},a_{-2}) & \zeta(a_{2},a_{1},a_{0},a_{-1},a_{-2}) & \zeta(a_{3},a_{2},a_{1},a_{0},a_{-1},a_{-2}) \\
 1 & \zeta(a_{0},a_{-1}) & \zeta(a_{2},a_{1},a_{0},a_{-1}) & \zeta(a_{3},a_{2},a_{1},a_{0},a_{-1}) \\
 0 & 1 & \zeta(a_{2},a_{1}) & \zeta(a_{3},a_{2},a_{1}) \\
 0 & 0 & 1 & \zeta(a_{3}) 
\end{array}
\right|.
\end{align*}
\end{exam}

\subsection{Giambelli formula}
\label{subsec:Giambelli}

 For a partition $\lambda$, 
 we define two sequences of indices $p_1,\ldots,p_t$ and $q_1,\ldots,q_t$ by $p_i=\lambda_i-i+1$ and $q_i=\lambda'_i-i$ for $1\le i\le t$
 where $t$ is the number of diagonal entries of $\lambda$.
 Notice that $p_1>p_2>\cdots >p_t>0$ and $q_1>q_2>\cdots >q_t\ge 0$
 and $\lambda=(p_1-1,\ldots,p_t-1\,|\,q_1,\ldots,q_t)$ is the Frobenius notation of $\lambda$.
 It is shown in \cite{NNSY} that $S^{(r)}_{\lambda}(X)$ satisfies the Giambelli formula
\begin{equation}
\label{for:G}
 S^{(r)}_{\lambda}(X)
=\det\left[S^{(r)}_{(p_i,1^{q_j})}(X)\right]_{1\le i,j \le t}.
\end{equation}
 
\begin{thm}
 Retain the above notations.
 Assume that ${\pmb s}=(s_{ij})\in W^{\mathrm{diag}}_{\lambda}$.
 Moreover, assume further that
 $\Re(s_{i,\lambda_i})=\Re(a_{p_i-1})>1$ and $\Re(s_{\lambda'_i,i})=\Re(a_{-q_i})>1$ for $1\le i \le t$. 
 Then, we have
\[
 \zeta_{\lambda}({\pmb s}) 
=\det\left[\zeta_{(p_i, 1^{q_j})}({\pmb s}_{i,j})\right]_{1 \le i,j \le t},
\]
 where \!
${\pmb s}_{i,j}=\,
\ytableausetup{boxsize=25pt,aligntableaux=center}
\begin{ytableau}
 a_0 & a_1 & a_2 & \cdots & a_{p_i-1} \\
 a_{-1} \\
 \vdots \\
 a_{-q_j} 
\end{ytableau}\in W_{(p_i, 1^{q_j})}
$.
\end{thm}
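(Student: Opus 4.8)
The plan is to transport the entire identity to Macdonald's ninth variation of Schur functions and then read off its Giambelli formula \eqref{for:G}. Fix integers $r\ge\ell(\lambda)$ and $s\ge\lambda_1$, so that $\lambda\subset(s^r)$, take $\mu=\emptyset$, and let $U=U^{(N)}$ together with the specialization $u^{(i)}_k=k^{-a_{i-r}}$ be as in \eqref{for:sMZVisS}; this yields $\zeta^{N}_{\lambda}({\pmb s})=S^{(r)}_{\lambda}(U)$. Applying \eqref{for:G} with $X=U$ then gives
\[
 \zeta^{N}_{\lambda}({\pmb s})
=\det\left[S^{(r)}_{(p_i,1^{q_j})}(U)\right]_{1\le i,j\le t}.
\]

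Next I would identify each matrix entry with a truncated Schur multiple zeta function. Each hook $(p_i,1^{q_j})$ lies in $(s^r)$, since $p_i\le p_1=\lambda_1\le s$ and $q_j+1\le q_1+1=\lambda'_1\le r$, and the cell in position $(a,b)$ of this hook has content $b-a$. Hence substituting the specialization $u^{(r-a+b)}_{m_{ab}}=m_{ab}^{-a_{b-a}}$ into the tableau representation \eqref{for:TableauRepresentation} of $S^{(r)}_{(p_i,1^{q_j})}(U)$ reproduces exactly the defining series of $\zeta^{N}_{(p_i,1^{q_j})}({\pmb s}_{i,j})$, where ${\pmb s}_{i,j}$ is the diagonal-constant tableau displayed in the statement: its first row reads $a_0,a_1,\ldots,a_{p_i-1}$ and the remainder of its first column reads $a_{-1},\ldots,a_{-q_j}$. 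Therefore $\zeta^{N}_{\lambda}({\pmb s})=\det[\zeta^{N}_{(p_i,1^{q_j})}({\pmb s}_{i,j})]_{1\le i,j\le t}$ for every $N$.

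Finally I would let $N\to\infty$. The determinant is a fixed polynomial in its $t^2$ entries, so it suffices that each entry converges, and this is precisely where the extra hypotheses $\Re(a_{p_i-1})>1$ and $\Re(a_{-q_i})>1$ are used. The corners of the hook $(p_i,1^{q_j})$ occur at the end of its row (content $p_i-1$) and, when $q_j\ge1$, at the foot of its column (content $-q_j$), while its remaining cells have contents strictly between $-q_j$ and $p_i-1$, hence already occurring among the contents of $\lambda$, on which ${\pmb s}\in W^{\mathrm{diag}}_{\lambda}$ forces $\Re\ge1$; thus ${\pmb s}_{i,j}\in W_{(p_i,1^{q_j})}$ and Lemma~\ref{lem:convergence} gives $\zeta^{N}_{(p_i,1^{q_j})}({\pmb s}_{i,j})\to\zeta_{(p_i,1^{q_j})}({\pmb s}_{i,j})$. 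The same assumptions together with ${\pmb s}\in W^{\mathrm{diag}}_{\lambda}$ ensure $\zeta^{N}_{\lambda}({\pmb s})\to\zeta_{\lambda}({\pmb s})$, and passing to the limit on both sides of the displayed determinant proves the theorem. The one step that requires genuine care is the bookkeeping in the middle paragraph — verifying that the content of each hook cell matches the diagonal index of ${\pmb s}$, so that the specialized \eqref{for:TableauRepresentation} lands exactly on ${\pmb s}_{i,j}$, and that the inclusion $(p_i,1^{q_j})\subset(s^r)$ holds uniformly in $i,j$; everything else is formal.
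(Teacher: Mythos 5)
Your proposal is correct and follows essentially the same route as the paper: specialize $u^{(i)}_k=k^{-a_{i-r}}$ so that $\zeta^N_\lambda({\pmb s})=S^{(r)}_\lambda(U)$ via \eqref{for:sMZVisS}, apply the Giambelli formula \eqref{for:G} for Macdonald's ninth variation, identify each entry $S^{(r)}_{(p_i,1^{q_j})}(U)$ with $\zeta^{N}_{(p_i,1^{q_j})}({\pmb s}_{i,j})$ through the tableau representation \eqref{for:TableauRepresentation}, and let $N\to\infty$. The only difference is that you spell out the content bookkeeping and the convergence of each hook entry, which the paper leaves implicit; both checks are carried out correctly.
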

\begin{proof}
 Putting $u^{(i)}_{k}=k^{-a_{i-r}}$, from \eqref{for:sMZVisS} and \eqref{for:G}, we have  
\[
 \zeta^N_{\lambda}({\pmb s})
=S^{(r)}_{\lambda}(U)
=\det\left[S^{(r)}_{(p_i,1^{q_j})}(U)\right]_{1\le i,j \le t}\\
=\det\left[\zeta^{N}_{(p_i,1^{q_j})}({\pmb s}_{i,j})\right]_{1\le i,j \le t}.
\]
 This leads the desired equation by letting $N\to\infty$.
\end{proof}

\begin{exam}
 When $\lambda=(4,3,3,2)=(3,1,0\,|\,3,2,0)$, we have 
\[
\ytableausetup{boxsize=18pt,aligntableaux=center}
\begin{ytableau}
    a_0 &    a_1 & a_2 & a_3 \\
 a_{-1} &    a_0 & a_1 \\
 a_{-2} & a_{-1} & a_0 \\
 a_{-3} & a_{-2} 
\end{ytableau}
\,
=
\,
\left|~
\begin{array}{ccc}
\begin{ytableau}
 a_0 & a_1 & a_2 & a_3 \\
 a_{-1} \\
 a_{-2} \\
 a_{-3} 
\end{ytableau}
 &
\begin{ytableau}
 a_0 & a_1 & a_2 & a_3 \\
 a_{-1} \\
 a_{-2} 
\end{ytableau}
 &
\begin{ytableau}
 a_0 & a_1 & a_2 & a_3
\end{ytableau}
\\[38pt]
\begin{ytableau}
 a_0 & a_1 \\
 a_{-1} \\
 a_{-2} \\
 a_{-3} 
\end{ytableau}
 &
\begin{ytableau}
 a_0 & a_1 \\
 a_{-1} \\
 a_{-2} 
\end{ytableau}
 &
\begin{ytableau}
 a_0 & a_1 
\end{ytableau} 
\\[38pt]
\begin{ytableau}
 a_0 \\
 a_{-1} \\
 a_{-2} \\
 a_{-3} 
\end{ytableau} 
 &
\begin{ytableau}
 a_0  \\
 a_{-1} \\
 a_{-2}
\end{ytableau} 
 &
\begin{ytableau}
 a_0 
\end{ytableau}
\end{array}
~\right|.
\]
\end{exam}

\subsection{Dual Cauchy formula}

 It is shown in \cite{N1} (see also \cite{N2}) that the dual Cauchy formula
\begin{equation}
\label{for:DC}
 \sum_{\lambda\subset (s^r)}(-1)^{|\lambda|}S^{(r)}_{\lambda}(X)S^{(s)}_{\lambda^{\ast}}(Y)
=\Psi^{(r,s)}(X,Y)
\end{equation}
 holds for $X=[x_{ij}]_{1\le i,j\le \eta}$ and $Y=[y_{ij}]_{1\le i,j\le \eta}$.
 Here, for a partition $\lambda=(\lambda_1,\ldots,\lambda_r)\subset (s^r)$, 
 $\lambda^{\ast}=(r-\lambda'_s,\ldots,r-\lambda'_1)$.
 Moreover, $\Psi^{(r,s)}(X,Y)$ is the dual Cauchy kernel defined by
\[
 \Psi^{(r,s)}(X,Y)
=\frac{\xi^{1,\ldots,r+s}_{1,\ldots,r+s}(Z)}{\xi^{1,\ldots,r}_{1,\ldots,r}(X)\xi^{1,\ldots,s}_{1,\ldots,s}(Y)},\quad 
 Z=
\left[
\begin{array}{cccc}
 x_{11} & x_{12} & \cdots & x_{1\eta} \\
  \vdots & \vdots  & & \vdots \\
 x_{r1} & x_{r2} & \cdots & x_{r\eta} \\
\hline 
 y_{11} & y_{12} & \cdots & y_{1\eta} \\
  \vdots & \vdots  & & \vdots \\
 y_{s1} & y_{s2} & \cdots & y_{s\eta} 
\end{array}
\right] 
.
\]
 Remark that when both $X$ and $Y$ are unitriangular,
 we have $\Psi^{(r,s)}(X,Y)=\det(Z)$.

 We now show an analogue of \eqref{for:DC} for SMZFs.  
 To do that, we first simplify the formula \eqref{for:DC} in the case where $X=U$ and $Y=V$.
 Here, for $M\in\mathbb{N}$,
 $V=V^{(M)}$ is an upper unitriangular matrix of size $\eta$ similarly defined as $U$, that is,
 $V=V_1V_2\cdots V_M$ where 
\[
 V_k
=\left(I_{\eta}+v^{(1)}_k E_{12}\right)\left(I_{\eta}+v^{(2)}_k E_{23}\right)
\cdots \left(I_{\eta}+v^{(\eta-1)}_k E_{\eta-1,\eta}\right)
\]
 with $v^{(i)}_k$ being  variables for $1\le k\le M$ and $1\le i\le \eta-1$.
 
 Write $U=[u_{ij}]_{1\le i,j\le \eta}$ and $V=[v_{ij}]_{1\le i,j\le \eta}$.
 We first show that  
\begin{equation}
\label{for:UijVij}
 u_{ij}
=
\begin{cases}
 h^{(i)}_{j-i}(U) & i\le j, \\
 0 & i>j,
\end{cases}
\quad 
 v_{ij}
=
\begin{cases}
 h^{(i)}_{j-i}(V) & i\le j, \\
 0 & i>j.
\end{cases}
\end{equation}
 Since these are clearly equivalent, 
 let us show only the former.
 Because $U$ is an upper unitiangular matrix,
 we have $u_{ij}=0$ unless $i\le j$. 
 When $i\le j$, we have 
\begin{align*}
 u_{ij}
=\sum^{\eta}_{l_1,\ldots,l_{N-1}=1}(U_{1})_{i,l_1}(U_{2})_{l_1,l_2}\cdots (U_{N})_{l_{N-1},j}.
\end{align*} 
 Here, for a matrix $A$, 
 we denote by $(A)_{i,j}$ the $(i,j)$ entry of $A$.
 Since 
\[
 (U_{k})_{a,b}
=
\begin{cases}
 \displaystyle{\prod^{b-a-1}_{h=0}}u^{(a+h)}_k & a\le b, \\[5pt]
 0 & a>b,
\end{cases}
\] 
 we have 
\[
 u_{ij}
=\sum_{i\le l_1\le \cdots \le l_{N-1}\le j}
\left(\prod^{l_1-i-1}_{h_1=0}u^{(i+h_1)}_1\right)
\left(\prod^{l_2-l_1-1}_{h_2=0}u^{(l_1+h_2)}_2\right)\cdots 
\left(\prod^{j-l_{N-1}-1}_{h_{N}=0}u^{(l_{N-1}+h_N)}_{N}\right).
\]
 Furthermore, writing $j=i+p$, we have 
\begin{align*}
 u_{i,i+p}
&=\sum_{i\le l_1\le \cdots \le l_{N-1}\le i+p}
\left(\prod^{l_1-i-1}_{h_1=0}u^{(i+h_1)}_1\right)
\left(\prod^{l_2-l_1-1}_{h_2=0}u^{(l_1+h_2)}_2\right)\cdots 
\left(\prod^{i+p-l_{N-1}-1}_{h_{N}=0}u^{(l_{N-1}+h_N)}_{N}\right)\\
&=\sum_{1\le m_1\le \cdots\le m_{p}\le N}u^{(i)}_{m_1}u^{(i+1)}_{m_2}\cdots u^{(i+p-1)}_{m_p}\\
&=h^{(i)}_{p}(U),
\end{align*} 
 whence we obtain the claim. 

 When $X=U$ and $Y=V$,
 from \eqref{for:UijVij}, \eqref{for:DC} can be written as follows. 

\begin{cor}
 It holds that 
\begin{equation}
\label{for:DCspecial}
 \sum_{\lambda\subset (s^r)}(-1)^{|\lambda|}S^{(r)}_{\lambda}(U)S^{(s)}_{\lambda^{\ast}}(V)
=\det
\left[
\begin{array}{ccccccc}
 1 & h^{(1)}_{1}(U) & h^{(1)}_{2}(U) & \cdots &  h^{(1)}_{r}(U) & \cdots & h^{(1)}_{\eta-1}(U) \\
 0 & 1 & h^{(2)}_{1}(U) & \cdots & h^{(2)}_{r-1}(U) & \cdots & h^{(2)}_{\eta-2}(U) \\
 \vdots & \ddots & \ddots & \ddots & \vdots & & \vdots \\
 0 & \cdots & 0 & 1 & h^{(r)}_{1}(U) & \cdots & h^{(r)}_{\eta-r}(U) \\
\hline 
 1 & h^{(1)}_{1}(V) & h^{(1)}_{2}(V) & \cdots &  h^{(1)}_{s}(V) & \cdots & h^{(1)}_{\eta-1}(V) \\
 0 & 1 & h^{(2)}_{1}(V) & \cdots & h^{(2)}_{s-1}(V) & \cdots & h^{(2)}_{\eta-2}(V) \\
 \vdots & \ddots & \ddots & \ddots & \vdots & & \vdots \\
 0 & \cdots & 0 & 1 & h^{(s)}_{1}(V) & \cdots & h^{(s)}_{\eta-s}(V) 
\end{array}
\right].
\end{equation}
\end{cor}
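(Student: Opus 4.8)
The plan is to obtain \eqref{for:DCspecial} as an immediate specialization of the dual Cauchy formula \eqref{for:DC}, taking $X=U$ and $Y=V$ and then unwinding the dual Cauchy kernel $\Psi^{(r,s)}(U,V)$.

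First I would invoke the remark recorded just after \eqref{for:DC}: since $U$ and $V$ are both upper unitriangular, one has $\xi^{1,\ldots,r}_{1,\ldots,r}(U)=1$ and $\xi^{1,\ldots,s}_{1,\ldots,s}(V)=1$, and hence $\Psi^{(r,s)}(U,V)=\xi^{1,\ldots,\eta}_{1,\ldots,\eta}(Z)=\det(Z)$, where $Z$ is the $\eta\times\eta$ matrix whose first $r$ rows are the first $r$ rows of $U$ and whose last $s$ rows are the first $s$ rows of $V$. Substituting into \eqref{for:DC} then yields $\sum_{\lambda\subset(s^r)}(-1)^{|\lambda|}S^{(r)}_{\lambda}(U)S^{(s)}_{\lambda^{\ast}}(V)=\det(Z)$.

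Second, I would read off the entries of $Z$ using \eqref{for:UijVij}. For $1\le i\le r$ the $i$th row of the upper block has $u_{ij}=0$ for $j<i$, $u_{ii}=h^{(i)}_{0}(U)=1$ on the diagonal, and $u_{ij}=h^{(i)}_{j-i}(U)$ for $j>i$, so in particular its rightmost entry is $h^{(i)}_{\eta-i}(U)$; the rows of the lower block are obtained in the same way with $U$ replaced by $V$ and $i$ ranging over $1,\ldots,s$. Writing the two blocks out row by row reproduces precisely the $\eta\times\eta$ matrix displayed in \eqref{for:DCspecial}, and combining this with the previous display finishes the argument.

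There is no genuine difficulty here; the only point requiring care is the index bookkeeping, namely remembering that $Z$ uses the first $r$ rows of $U$ and the first $s$ rows of $V$ (not all $\eta$ rows), that $h^{(i)}_{0}=1$ produces the staircase of $1$'s down each block, and that the subscript of $h^{(i)}$ in position $(i,j)$ is the difference $j-i$, so that the column ranges in the top block ($1\le i\le r$) and the bottom block ($1\le i\le s$) of \eqref{for:DCspecial} line up correctly with $\eta=r+s$.
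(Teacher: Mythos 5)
Your argument is correct and is exactly the paper's own derivation: the text preceding the corollary specializes the dual Cauchy formula \eqref{for:DC} to $X=U$, $Y=V$, uses the remark that $\Psi^{(r,s)}(U,V)=\det(Z)$ for unitriangular $U,V$, and then substitutes the entry formula \eqref{for:UijVij} to identify $\det(Z)$ with the displayed determinant. Your bookkeeping of the rows of $Z$ and of the indices $h^{(i)}_{j-i}$ matches the paper's.
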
 

\begin{thm}
 Assume that ${\pmb s}=(s_{ij})\in W^{\mathrm{diag}}_{(s^r)}$ and ${\pmb t}=(t_{ij})\in W^{\mathrm{diag}}_{(r^s)}$
 with $a_{k}=s_{i,i+k}$ and $b_{k}=t_{i,i+k}$ for $k\in\mathbb{Z}$.
 Moreover, assume that $\Re(s_{ij})>1$ for all $1\le i\le r$, $1\le j\le s$ 
 and $\Re(t_{ij})>1$ for all $1\le i\le s$, $1\le j\le r$.    
 Then, we have
\begin{align}
\label{for:DCforzeta}
\ \\[-65pt]
 \sum_{\lambda\subset (s^r)}&(-1)^{|\lambda|}
\zeta_{\lambda}\left({\pmb s}|_{\lambda}\right)\zeta_{\lambda^{\ast}}\left({\pmb t}|_{\lambda^{\ast}}\right)\nonumber\\
&=\det
\left[
\begin{array}{ccccccc}
 1 & \zeta^{\star}(a_{1-r}) & \zeta^{\star}(a_{1-r},a_{2-r}) & \cdots & \zeta^{\star}(a_{1-r},\ldots,a_{0}) & \cdots & \zeta^{\star}(a_{1-r},\ldots,a_{\eta-1-r}) \\
 0 & 1 & \zeta^{\star}(a_{2-r}) & \cdots & \zeta^{\star}(a_{2-r},\ldots,a_{0}) & \cdots & \zeta^{\star}(a_{2-r},\ldots,a_{\eta-1-r}) \\
 \vdots & \ddots & \ddots & \ddots & \vdots & & \vdots \\
 0 & \cdots & 0 & 1 & \zeta^{\star}(a_0) & \cdots &  \zeta^{\star}(a_0, \ldots, a_{\eta-1-r}) \\
\hline 
 1 & \zeta^{\star}(b_{1-s}) & \zeta^{\star}(b_{1-s},b_{2-s}) & \cdots & \zeta^{\star}(b_{1-s},\ldots,b_{0}) & \cdots & \zeta^{\star}(b_{1-s},\ldots,b_{\eta-1-s}) \\
 0 & 1 & \zeta^{\star}(b_{2-s}) & \cdots & \zeta^{\star}(b_{2-s},\ldots,b_{0}) & \cdots & \zeta^{\star}(b_{2-s},\ldots,b_{\eta-1-s}) \\
 \vdots & \ddots & \ddots & \ddots & \vdots & & \vdots \\
 0 & \cdots & 0 & 1 & \zeta^{\star}(b_0) & \cdots & \zeta^{\star}(b_0, \ldots, b_{\eta-1-s}) 
\end{array}
\right].
\nonumber
\end{align}
 Here, ${\pmb s}|_{\lambda}\in W^{\mathrm{diag}}_{\lambda}$ and ${\pmb t}|_{\lambda^{\ast}}\in W^{\mathrm{diag}}_{\lambda^{\ast}}$
 are the shape restriction of ${\pmb s}$ and ${\pmb t}$ to $\lambda$ and $\lambda^{\ast}$, respectively.
\end{thm}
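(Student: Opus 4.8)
The plan is to deduce \eqref{for:DCforzeta} from the ninth-variation identity \eqref{for:DCspecial} by exactly the mechanism used to prove the Jacobi-Trudi formulas: specialize the auxiliary variables of $U$ and $V$, then let $N\to\infty$. First I would set $u^{(i)}_k=k^{-a_{i-r}}$ and $v^{(i)}_k=k^{-b_{i-s}}$ for $1\le k\le N$ and $1\le i\le\eta-1$, where, as in the statement, $a_k=s_{i,i+k}$ and $b_k=t_{i,i+k}$. Since $\lambda\subset(s^r)$ forces $\lambda^{\ast}\subset(r^s)$, both restrictions ${\pmb s}|_{\lambda}$ and ${\pmb t}|_{\lambda^{\ast}}$ are well defined, and every tuple $(a_p,a_{p+1},\ldots)$, resp. $(b_q,b_{q+1},\ldots)$, appearing below has indices in the range $1-r\le p\le s-1$, resp. $1-s\le q\le r-1$, on which $\Re(a_p)>1$ and $\Re(b_q)>1$ by hypothesis.

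Next I would identify the two sides of the specialized \eqref{for:DCspecial}. For the left-hand side, \eqref{for:sMZVisS}, applied with $\mu=\emptyset$ to each $\lambda\subset(s^r)$ (whose number of rows is $\le r$), gives $S^{(r)}_{\lambda}(U)=\zeta^{N}_{\lambda}({\pmb s}|_{\lambda})$ under $u^{(i)}_k=k^{-a_{i-r}}$; running the identical argument with the roles of $r$ and $s$, of ${\pmb s}$ and ${\pmb t}$, and of $U$ and $V$ interchanged gives $S^{(s)}_{\lambda^{\ast}}(V)=\zeta^{N}_{\lambda^{\ast}}({\pmb t}|_{\lambda^{\ast}})$ under $v^{(i)}_k=k^{-b_{i-s}}$ (the matrix size remains $\eta=r+s$ throughout). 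Hence the left-hand side of \eqref{for:DCspecial} becomes $\sum_{\lambda\subset(s^r)}(-1)^{|\lambda|}\zeta^{N}_{\lambda}({\pmb s}|_{\lambda})\zeta^{N}_{\lambda^{\ast}}({\pmb t}|_{\lambda^{\ast}})$. For the right-hand side, by the same computation as in the proof of the Jacobi-Trudi theorem the combinatorial formula for $h^{(i)}_{n}(U)$ coming from \eqref{for:UijVij} (or \eqref{for:TableauRepresentation}) yields, under $u^{(i)}_k=k^{-a_{i-r}}$, the value $h^{(i)}_{n}(U)=\zeta^{N\star}(a_{i-r},a_{i-r+1},\ldots,a_{i-r+n-1})$, and likewise $h^{(i)}_{n}(V)=\zeta^{N\star}(b_{i-s},b_{i-s+1},\ldots,b_{i-s+n-1})$. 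Substituting $n=j-i$ in the matrix of \eqref{for:DCspecial} turns its $(i,j)$ entries into $\zeta^{N\star}(a_{i-r},\ldots,a_{j-r-1})$ in the upper block and $\zeta^{N\star}(b_{i-s},\ldots,b_{j-s-1})$ in the lower block, which are precisely the entries of the matrix on the right of \eqref{for:DCforzeta} with $\zeta^{\star}$ replaced by $\zeta^{N\star}$.

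Finally I would pass to the limit $N\to\infty$. The outer sum runs over the finitely many $\lambda\subset(s^r)$ and the determinant is of fixed size $\eta$, so the limit may be taken termwise. The hypothesis $\Re(s_{ij})>1$ for all $1\le i\le r$, $1\le j\le s$ ensures ${\pmb s}|_{\lambda}\in W^{\mathrm{diag}}_{\lambda}$ for every $\lambda\subset(s^r)$, in particular $\Re(s_{ij})>1$ at each corner of $\lambda$, so $\zeta^{N}_{\lambda}({\pmb s}|_{\lambda})\to\zeta_{\lambda}({\pmb s}|_{\lambda})$ by Lemma~\ref{lem:convergence}, and symmetrically $\zeta^{N}_{\lambda^{\ast}}({\pmb t}|_{\lambda^{\ast}})\to\zeta_{\lambda^{\ast}}({\pmb t}|_{\lambda^{\ast}})$; each matrix entry $\zeta^{N\star}(\cdots)$ likewise converges to the corresponding $\zeta^{\star}(\cdots)$ because all of its arguments have real part $>1$. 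This establishes \eqref{for:DCforzeta}. The argument is essentially bookkeeping once \eqref{for:DCspecial}, \eqref{for:sMZVisS} and the evaluation of $h^{(i)}_{n}(U)$ are in hand; the one point requiring genuine attention is the interplay of the two specializations --- keeping straight which of $r,s$ and which of $U,V$ plays which role --- together with the observation that it is the everywhere-strict conditions $\Re(s_{ij})>1$ and $\Re(t_{ij})>1$, rather than strictness only at corners, that place every restriction ${\pmb s}|_{\lambda}$ and ${\pmb t}|_{\lambda^{\ast}}$ in the region of absolute convergence as $\lambda$ ranges over the whole $r\times s$ box.
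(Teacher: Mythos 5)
Your proposal is correct and follows essentially the same route as the paper: specialize $u^{(i)}_k=k^{-a_{i-r}}$ and $v^{(i)}_k=k^{-b_{i-s}}$ in \eqref{for:DCspecial}, identify $S^{(r)}_{\lambda}(U)$ and $S^{(s)}_{\lambda^{\ast}}(V)$ with truncated SMZFs via \eqref{for:sMZVisS} and the matrix entries $h^{(i)}_{n}$ with truncated zeta-star values, then let the truncation parameter tend to infinity. You are in fact slightly more careful than the paper on two points --- the paper's printed specialization $v^{(i)}_k=k^{-b_{i-r}}$ is evidently a typo for $k^{-b_{i-s}}$, which you get right, and you spell out why the everywhere-strict hypotheses $\Re(s_{ij})>1$, $\Re(t_{ij})>1$ are what make every restriction ${\pmb s}|_{\lambda}$, ${\pmb t}|_{\lambda^{\ast}}$ convergent as $\lambda$ ranges over the full box.
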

\begin{proof}
 Putting $u^{(i)}_{k}=k^{-a_{i-r}}$ and $v^{(i)}_{k}=k^{-b_{i-r}}$, we have 
\begin{align*}
 h^{(i)}_{k}(U)
&=\sum_{m_{1}\le \cdots \le m_{k}\le N}u^{(i)}_{m_{1}}u^{(i+1)}_{m_{2}}\cdots u^{(i+k-1)}_{m_{k}}\\
&=\sum_{m_{1}\le \cdots \le m_{k}\le N}m_{1}^{-a_{i-r}}m_{2}^{-a_{i+1-r}}\cdots m_{k}^{-a_{i+k-1-r}}\\
&=\zeta^{N\star}(a_{i-r},a_{i+1-r},\ldots,a_{i+k-1-r})
\end{align*}
 and similarly
\begin{align*}
 h^{(i)}_{k}(V)
&=\zeta^{M\star}(b_{i-s},b_{i+1-s},\ldots,b_{i+k-1-s}).
\end{align*}
 Therefore, \eqref{for:DCspecial} immediately yields \eqref{for:DCforzeta} by letting $N,M\to\infty$. 
\end{proof}

\begin{exam}
 When $r=2$ and $s=3$, we have
\begin{align*}
\ytableausetup{boxsize=18pt,aligntableaux=center}
 (\text{LHS of \eqref{for:DCforzeta}})
&=-\,
\begin{ytableau}
    a_0 & a_1 & a_2 \\
 a_{-1} & a_0 & a_1 
\end{ytableau} 
\ \cdot\ 1
-\,
\begin{ytableau}
    a_0 & a_1 & a_2 \\
 a_{-1} & a_0  
\end{ytableau} 
\ \cdot\ 
\begin{ytableau}
 b_0 
\end{ytableau} 
+\, 
\begin{ytableau}
    a_0 & a_1 & a_2 \\
 a_{-1} 
\end{ytableau} 
\ \cdot\ 
\begin{ytableau}
    b_0 \\
 b_{-1}  
\end{ytableau}
\\
&\ \ \ -\,
\begin{ytableau}
 a_0 & a_1 & a_2 \\
\end{ytableau} 
\ \cdot\ 
\begin{ytableau}
    b_0 \\
 b_{-1} \\
 b_{-2} 
\end{ytableau}
+\, 
\begin{ytableau}
    a_0 & a_1 \\
 a_{-1} & a_0  
\end{ytableau} 
\ \cdot\ 
\begin{ytableau}
 b_0 & b_1 
\end{ytableau}
-\, 
\begin{ytableau}
    a_0 & a_1 \\
 a_{-1}  
\end{ytableau} 
\ \cdot\ 
\begin{ytableau}
    b_0 & b_1 \\
 b_{-1}  
\end{ytableau} 
\\
&\ \ \ 
+\, 
\begin{ytableau}
 a_0 & a_1  
\end{ytableau}
\ \cdot\ 
\begin{ytableau}
    b_0 & b_1 \\
 b_{-1} \\
 b_{-2}  
\end{ytableau}
+\, 
\begin{ytableau}
    a_0 \\
 a_{-1}  
\end{ytableau} 
\ \cdot\ 
\begin{ytableau}
    b_0 & b_1 \\
 b_{-1} & b_0
\end{ytableau} 
-\, 
\begin{ytableau}
 a_0  
\end{ytableau} 
\ \cdot\ 
\begin{ytableau}
    b_0 & b_1 \\
 b_{-1} & b_0 \\
 b_{-2} 
\end{ytableau} 
+\,
1
\ \cdot\ 
\begin{ytableau}
    b_0 & b_1 \\
 b_{-1} & b_0 \\
 b_{-2} & b_{-1} 
\end{ytableau} 
\,.
\end{align*} 
 On the other hand, we have 
\begin{align*}
 (\text{RHS of \eqref{for:DCforzeta}})
&=\det
\left[
\begin{array}{ccccc}
 1 & \zeta^{\star}(a_{-1}) & \zeta^{\star}(a_{-1},a_{0}) & \zeta^{\star}(a_{-1},a_{0},a_1) & \zeta^{\star}(a_{-1},a_{0},a_1,a_2) \\[3pt]
 0 & 1 &  \zeta^{\star}(a_{0}) & \zeta^{\star}(a_{0},a_{1}) & \zeta^{\star}(a_{0},a_{1},a_2) \\[3pt]
\hline
 1 & \zeta^{\star}(b_{-2}) & \zeta^{\star}(b_{-2},b_{-1}) & \zeta^{\star}(b_{-2},b_{-1},b_0) & \zeta^{\star}(b_{-2},b_{-1},b_0,b_1) \\[3pt]
 0 & 1 & \zeta^{\star}(b_{-1}) & \zeta^{\star}(b_{-1},b_{0}) & \zeta^{\star}(b_{-1},b_{0},b_1) \\[3pt]
 0 & 0 & 1 & \zeta^{\star}(b_{0}) & \zeta^{\star}(b_{0},b_{1}) 
\end{array}
\right]
.
\end{align*}
\end{exam}


\section{Schur type quasi-symmetric functions}
\label{sec:Sqsf}

 We here investigate SMZFs
 from the view point of the quasi-symmetric functions introduced by Gessel \cite{G}.

\subsection{Quasi-symmetric functions}
 
 Let ${\pmb t}=(t_1,t_2,\ldots)$ be variables and
 $\mathfrak{P}$ a subalgebra of $\mathbb{Z}[\![t_1,t_2,\ldots\,]\!]$ consisting of all formal power series with integer coefficients of bounded degree.
 We call $p=p({\pmb t})\in \mathfrak{P}$ a {\it quasi-symmetric function}
 if the coefficient of $t^{\gamma_1}_{k_1}t^{\gamma_2}_{k_2}\cdots t^{\gamma_n}_{k_n}$ of $p$ is the same as
 that of $t^{\gamma_1}_{h_1}t^{\gamma_2}_{h_2}\cdots t^{\gamma_l}_{h_n}$ of $p$ whenever $k_1<k_2<\cdots <k_n$ and $h_1<h_2<\cdots <h_n$.
 The algebra of all quasi-symmetric functions is denoted by $\mathrm{Qsym}$. 
 For a composition ${\pmb \gamma}=(\gamma_1,\gamma_2,\ldots,\gamma_n)$ of a positive integer,
 define the {\it monomial quasi-symmetric function} $M_{\pmb \gamma}$ and 
 the {\it essential quasi-symmetric function} $E_{\pmb \gamma}$ respectively by
\[
 M_{\pmb \gamma}
=\sum_{m_1<m_2<\cdots<m_n}t_{m_1}^{\gamma_1}t_{m_2}^{\gamma_2}\cdots t^{\gamma_n}_{m_n},
\quad
 E_{\pmb \gamma}
=\sum_{m_1\le m_2\le \cdots \le m_n}t_{m_1}^{\gamma_1}t_{m_2}^{\gamma_2}\cdots t^{\gamma_n}_{m_n}.
\]
 We know that these respectively form integral basis of $\mathrm{Qsym}$.
 Notice that 
\begin{equation}
\label{for:EM}
 E_{\pmb \gamma}
=\sum_{{\pmb \delta} \,\preceq\, {\pmb \gamma}}M_{\pmb \delta}, \quad 
 M_{\pmb \gamma}
=\sum_{{\pmb \delta} \,\preceq\, {\pmb \gamma}}(-1)^{n-\ell({\pmb \delta})}E_{\pmb \delta}.
\end{equation}

\subsection{Relation between quasi-symmetric functions and multiple zeta values}
 
 A relation between the multiple zeta values and the quasi-symmetric functions is studied by Hoffman \cite{H2}
 (remark that the notations of MZF and MZSF in \cite{H2} are different from ours;
 they are $\zeta(s_n,s_{n-1},\ldots,s_1)$ and $\zeta^{\star}(s_n,s_{n-1},\ldots,s_1)$, respectively, in our notations).  
 Let $\mathfrak{H}=\mathbb{Z}\langle x,y\rangle$ be the noncommutative polynomial algebra over $\mathbb{Z}$.
 We can define a commutative and associative multiplication $\ast$, called a $\ast$-product, on $\mathfrak{H}$.
 We call $(\mathfrak{H},\ast)$ the (integral) harmonic algebra.   
 Let $\mathfrak{H}^{1}=\mathbb{Z}1+y\mathfrak{H}$, which is a subalgebra of $\mathfrak{H}$.
 Notice that every $w\in \mathfrak{H}^{1}$ can be written as 
 an integral linear combination of $z_{\gamma_1}z_{\gamma_2}\cdots z_{\gamma_n}$ where $z_{\gamma}=yx^{\gamma-1}$ for $\gamma\in\mathbb{N}$.
 For each $N\in \mathbb{N}$,
 define the homomorphism $\phi_N:\mathfrak{H}^{1}\to \mathbb{Z}[t_1,t_2,\ldots,t_N]$ by $\phi_N(1)=1$ and 
\[
 \phi_N(z_{\gamma_1}z_{\gamma_2}\cdots z_{\gamma_n})
=
\begin{cases}
 \displaystyle{\sum_{m_1<m_2<\cdots<m_n\le N}t_{m_1}^{\gamma_1}t_{m_2}^{\gamma_2}\cdots t^{\gamma_n}_{m_n}} & n\le N, \\
 0 & \text{otherwise},
\end{cases}
\]
 and extend it additively to $\mathfrak{H}^{1}$.
 There is a unique homomorphism $\phi:\mathfrak{H}^{1}\to \mathfrak{P}$ such that $\pi_N\phi=\phi_N$
 where $\pi_N$ is the natural projection from $\mathfrak{P}$ to $\mathbb{Z}[t_1,t_2,\ldots,t_N]$.
 We have $\phi(z_{\gamma_1}z_{\gamma_2}\cdots z_{\gamma_n})=M_{(\gamma_1,\gamma_2,\ldots,\gamma_n)}$.
 Moreover, as is described in \cite{H2}, $\phi$ is an isomorphism between $\mathfrak{H}^1$ and $\mathrm{Qsym}$.

 Let $e$ be the function sending $t_i$ to $\frac{1}{i}$. 
 Moreover, define $\rho_N:\mathfrak{H}^{1}\to\mathbb{R}$ by $\rho_N=e\phi_N$.
 For a composition ${\pmb \gamma}$, we have 
\[
 \rho_N\phi^{-1}(M_{\pmb \gamma})=\zeta^{N}({\pmb \gamma}), \quad 
 \rho_N\phi^{-1}(E_{\pmb \gamma})=\zeta^{N\star}({\pmb \gamma}). 
\]
 Here, the second formula follows from the first equations of \eqref{for:zsNzN} and \eqref{for:EM}. 
 Define the map $\rho:\mathfrak{H}^1\to\mathbb{R}^{\mathbb{N}}$ by 
 $\rho(w)=(\rho_N(w))_{N\in\mathbb{N}}$ for $w\in\mathfrak{H}^{1}$. 
 Notice that if $w\in \mathfrak{H}^0=\mathbb{Z}1+y\mathfrak{H}x$, which is a subalgebra of $\mathfrak{H}^1$,
 then we may understand that $\rho(w)=\lim_{N\to\infty}\rho_N(w)\in\mathbb{R}$.
 In particular, for a composition ${\pmb \gamma}=(\gamma_1,\gamma_2,\ldots,\gamma_n)$ with $\gamma_n\ge 2$,
 we have 
\begin{equation}
\label{for:rho}
 \rho\phi^{-1}(M_{\pmb \gamma})
=\zeta({\pmb \gamma}), \quad
 \rho\phi^{-1}(E_{\pmb \gamma})
=\zeta^{\star}({\pmb \gamma}).
\end{equation}

\subsection{Schur type quasi-symmetric functions}

 Now, one easily reaches the definition of the following {\it Schur type quasi-symmetric functions} (of skew type). 
 For partitions $\lambda,\mu$ satisfying $\mu\subset \lambda\subset (s^r)$ and ${\pmb \gamma}=(\gamma_{ij})\in T(\lambda/\mu,\mathbb{N})$, 
 define  
\[
 S_{\lambda/\mu}({\pmb \gamma})
=\sum_{(m_{ij})\in\mathrm{SSYT}(\lambda)}\prod_{(i,j)\in D(\lambda/\mu)}t^{\gamma_{ij}}_{m_{ij}},
\]
 which is actually in $\mathrm{Qsym}$.
 Clearly we have 
\begin{align*}
\ytableausetup{boxsize=normal,aligntableaux=center}
 S_{(1^n)} 
\left(~ 
\begin{ytableau}
 \gamma_1 \\
 \lower2pt\vdots \\
 \gamma_{n}
\end{ytableau} 
~\right)
=M_{(\gamma_1,\ldots,\gamma_n)},
 \quad
 S_{(n)} 
\left(~
\begin{ytableau}
 \gamma_1 & \cdots & \gamma_{n}
\end{ytableau} 
~\right) 
=E_{(\gamma_1,\ldots,\gamma_n)}.
\end{align*} 
 Hence $S_{\lambda/\mu}({\pmb \gamma})$ interpolates both the monomial and essential quasi-symmetric functions.
 Moreover, one sees that this is the quasi-symmetric function
 corresponding to the Schur multiple zeta value in the sense of \eqref{for:rho}. 
 
\begin{lemma}
 Let 
\[
 I_{\lambda/\mu}
=
\left\{{\pmb \gamma}=(\gamma_{ij})\in T(\lambda/\mu,\mathbb{N})\,\left|\,\text{$\gamma_{ij}\ge 2$ for all $(i,j)\in C(\lambda/\mu)$}
\right.
\right\}.
\]
 Then, for ${\pmb \gamma}\in I_{\lambda/\mu}$,
 we have 
\[
 \rho\phi^{-1}(S_{\lambda/\mu}({\pmb \gamma}))
=\zeta_{\lambda/\mu}({\pmb \gamma}).
\]
\end{lemma}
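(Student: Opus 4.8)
The plan is to reduce the identity to the combinatorics already packaged in \eqref{for:SchurtoMZVskew}. First I would establish the quasi-symmetric refinement
\[
 S_{\lambda/\mu}({\pmb \gamma})=\sum_{{\pmb \delta}\,\preceq\,{\pmb \gamma}}M_{\pmb \delta},
\]
which is obtained by running the combinatorial argument behind \eqref{for:SchurtoMZVskew} at the level of quasi-symmetric functions, i.e.\ before specializing the variables: grouping the monomials $\prod_{(i,j)\in D(\lambda/\mu)}t_{m_{ij}}^{\gamma_{ij}}$ of $S_{\lambda/\mu}({\pmb \gamma})$ according to which entries of $(m_{ij})\in\mathrm{SSYT}(\lambda/\mu)$ coincide, and in which relative order the distinct values occur, is precisely the passage from a reading in $V({\pmb \gamma})$ to a composition ${\pmb \delta}\preceq{\pmb \gamma}$. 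Since $\phi(z_{\delta_1}z_{\delta_2}\cdots z_{\delta_n})=M_{(\delta_1,\ldots,\delta_n)}$, applying $\phi^{-1}$ gives
\[
 \phi^{-1}\bigl(S_{\lambda/\mu}({\pmb \gamma})\bigr)=\sum_{{\pmb \delta}\,\preceq\,{\pmb \gamma}}z_{\delta_1}z_{\delta_2}\cdots z_{\delta_{\ell({\pmb \delta})}}.
\]

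Next I would check that this element lies in $\mathfrak{H}^0=\mathbb{Z}1+y\mathfrak{H}x$, so that $\rho$ may be applied to it as a map into $\mathbb{R}$ rather than $\mathbb{R}^{\mathbb{N}}$. By the skew analogue of the remark preceding Section~\ref{subsec:special}, the last part $\delta_{\ell({\pmb \delta})}$ of any ${\pmb \delta}\preceq{\pmb \gamma}$ is a sum of entries $\gamma_{ij}$ at least one of which sits at a corner of $\lambda/\mu$; since ${\pmb \gamma}\in I_{\lambda/\mu}$ that entry is $\ge 2$, hence $\delta_{\ell({\pmb \delta})}\ge 2$ and $z_{\delta_{\ell({\pmb \delta})}}=yx^{\delta_{\ell({\pmb \delta})}-1}$ ends in $x$, so each word — and therefore the whole sum — belongs to $\mathfrak{H}^0$. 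Equivalently one may argue directly: the largest entry of any $M\in\mathrm{SSYT}(\lambda/\mu)$ is attained at some corner box of $\lambda/\mu$, because the strict increase down columns forbids a non-corner box from carrying the maximum, and this forces every composition occurring in the $M$-expansion of $S_{\lambda/\mu}({\pmb \gamma})$ to end in a part $\ge 2$.

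Finally, applying $\rho$ and using linearity together with the first formula of \eqref{for:rho}, which is available precisely because each $\delta_{\ell({\pmb \delta})}\ge 2$, I obtain
\[
 \rho\phi^{-1}\bigl(S_{\lambda/\mu}({\pmb \gamma})\bigr)=\sum_{{\pmb \delta}\,\preceq\,{\pmb \gamma}}\rho\phi^{-1}(M_{\pmb \delta})=\sum_{{\pmb \delta}\,\preceq\,{\pmb \gamma}}\zeta({\pmb \delta})=\zeta_{\lambda/\mu}({\pmb \gamma}),
\]
the last equality being \eqref{for:SchurtoMZVskew}; note that ${\pmb \gamma}\in I_{\lambda/\mu}$ forces ${\pmb \gamma}\in W_{\lambda/\mu}$ (corner entries $\ge 2>1$, all other entries positive integers hence $\ge 1$), so the right-hand side converges absolutely by the skew form of Lemma~\ref{lem:convergence}.

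A variant of the argument avoids the $M$-expansion of $S_{\lambda/\mu}({\pmb \gamma})$ altogether: since $\phi_N=\pi_N\phi$ and $\rho_N=e\phi_N$, one has $\rho_N\phi^{-1}=e\pi_N$ on $\mathrm{Qsym}$, whence $\rho_N\phi^{-1}(S_{\lambda/\mu}({\pmb \gamma}))=\zeta^{N}_{\lambda/\mu}({\pmb \gamma})$ directly, and one lets $N\to\infty$ (using $\mathfrak{H}^0$-membership to interpret $\rho$ as the limit, and Lemma~\ref{lem:convergence} for the convergence of $\zeta^N_{\lambda/\mu}({\pmb \gamma})$). Either way, the only step that is not pure bookkeeping is the quasi-symmetric identity $S_{\lambda/\mu}({\pmb \gamma})=\sum_{{\pmb \delta}\preceq{\pmb \gamma}}M_{\pmb \delta}$ together with the attendant fact that every occurring composition ends in a part $\ge 2$; since this is a verbatim repetition of the combinatorics establishing \eqref{for:SchurtoMZV}–\eqref{for:SchurtoMZVskew}, no genuine obstacle arises.
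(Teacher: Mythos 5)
Your proof is correct and follows essentially the same route as the paper: establish the monomial expansion $S_{\lambda/\mu}({\pmb \gamma})=\sum_{{\pmb \delta}\preceq{\pmb \gamma}}M_{\pmb \delta}$ (the first identity of \eqref{for:SMorSE}), then apply $\rho\phi^{-1}$ termwise via \eqref{for:rho} and compare with \eqref{for:SchurtoMZVskew}. Your explicit verification that every occurring composition ends in a part $\ge 2$ (so that the words lie in $\mathfrak{H}^0$ and $\rho$ takes real values) is a detail the paper leaves implicit, and it is handled correctly.
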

\begin{proof}
 This follows from one of the following expressions 
\begin{equation}
\label{for:SMorSE}
 S_{\lambda/\mu}({\pmb \gamma})
=\sum_{{\pmb u} \,\preceq\, {\pmb \gamma}}M_{\pmb u}, \quad
 S_{\lambda/\mu}({\pmb \gamma})
=\sum_{{\pmb u} \,\preceq\, {\pmb \gamma}'}(-1)^{|\lambda/\mu|-\ell({\pmb u})}E_{\pmb u},
\end{equation}
 similarly obtained as \eqref{for:SchurtoMZVskew},
 together with \eqref{for:SchurtoMZVskew} and \eqref{for:rho}.
\end{proof}

\begin{rmk}
 There is another important class of quasi-symmetric functions called the {\it fundamental} or {\it ribbon quasi-symmetric function} defined by 
 $F_{\pmb \gamma}=\sum_{{\pmb \delta} \,\succeq\, {\pmb \gamma}}M_{\pmb \delta}$ for a composition ${\pmb \gamma}$.
 We remark that they are not in the class of Schur type quasi-symmetric functions.
\end{rmk}

 We again concentrate on the case ${\pmb \gamma}=(\gamma_{ij})\in T^{\mathrm{diag}}(\lambda/\mu,\mathbb{N})$.
 Write $c_{k}=\gamma_{i,i+k}$ for $k\in\mathbb{Z}$ (and for any $i\in\mathbb{N}$).
 Then, from the tableau expression \eqref{for:TableauRepresentation} of the ninth variation of the Schur function $S^{(r)}_{\lambda/\mu}(U)$,
 if we put $u_k^{(i)}=t^{c_{i-r}}_k$, then we have $u^{(r-i+j)}_{m_{ij}}=t^{\gamma_{ij}}_{m_{ij}}$ 
 and hence   
\begin{align*}
 S^{(r)}_{\lambda/\mu}(U)
&=\sum_{(m_{ij})\in \mathrm{SSYT}_N(\lambda/\mu)}\prod_{(i,j)\in D(\lambda/\mu)}u^{(r-i+j)}_{m_{ij}}\\
&=\sum_{(m_{ij})\in \mathrm{SSYT}_N(\lambda/\mu)}\prod_{(i,j)\in D(\lambda/\mu)}t^{\gamma_{ij}}_{m_{ij}}\\
&=\phi_N\phi^{-1}(S_{\lambda/\mu}({\pmb \gamma})).
\end{align*}
 This shows that, when ${\pmb \gamma}\in T^{\mathrm{diag}}(\lambda/\mu,\mathbb{N})$,
 the Schur type quasi-symmetric function $S_{\lambda/\mu}({\pmb \gamma})$
 is also realized as (the limit of) a specialization of the ninth variation of the Schur functions,
 whence we can similarly obtain the Jacobi-Trudi, Giambelli and dual Cauchy formulas
 for such quasi-symmetric functions.
 Notice that the following formulas actually hold in the algebra of formal power series,
 which means that we do not need any further assumptions on variables such as appeared in the corresponding results
 in the previous section for SMZFs. 
 
\begin{thm}
 Assume that ${\pmb \gamma}=(\gamma_{ij})\in T^{\mathrm{diag}}(\lambda/\mu,\mathbb{N})$
 and write $c_{k}=\gamma_{i,i+k}$ for $k\in\mathbb{Z}$. 
\begin{itemize}
\item[$(1)$] 
 We have
\begin{equation}
\label{for:JTquasiE}
 S_{\lambda/\mu}({\pmb \gamma})
=\det\left[E_{(c_{\mu_j-j+1},c_{\mu_j-j+2},\ldots,c_{\mu_j-j+(\lambda_{i}-\mu_j-i+j)})}\right]_{1\le i,j\le r\,}.
\end{equation}
 Here, we understand that $E_{(\,\cdots)}=1$ if $\lambda_i-\mu_j-i+j=0$ and $0$ if $\lambda_i-\mu_j-i+j<0$.
\item[$(2)$]
 We have
\begin{equation}
\label{for:JTquasiM}
 S_{\lambda/\mu}({\pmb \gamma})
=\det\left[M_{(c_{-\mu'_j+j-1},c_{-\mu'_j+j-2},\ldots,c_{-\mu'_j+j-(\lambda'_{i}-\mu'_j-i+j)})}\right]_{1\le i,j\le s}.
\end{equation}
 Here, we understand that $M_{(\,\cdots)}=1$ if $\lambda'_i-\mu'_j-i+j=0$ and $0$ if $\lambda'_i-\mu'_j-i+j<0$.
\end{itemize}
\end{thm}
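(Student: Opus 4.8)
The plan is to transport Macdonald's ninth--variation Jacobi--Trudi formulas \eqref{for:JTh} and \eqref{for:JTe} through the specialization $u^{(i)}_k=t^{c_{i-r}}_k$ and the isomorphism $\phi\colon\mathfrak{H}^{1}\xrightarrow{\sim}\mathrm{Qsym}$, exactly as in the proof of the previous theorem, but now keeping the variables formal so that no convergence argument, and hence no hypothesis on $\pmb\gamma$ beyond $\pmb\gamma\in T^{\mathrm{diag}}(\lambda/\mu,\mathbb{N})$, is required.

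First I would specialize $u^{(i)}_k=t^{c_{i-r}}_k$ in the one--row and one--column ninth--variation symmetric functions. From the explicit sums for $h^{(r)}_n(U)$ and $e^{(r)}_n(U)$ recorded in the proof of the previous theorem, this specialization gives, for every admissible superscript $i$ and every $k\ge 1$,
\[
 h^{(i)}_k(U)=\sum_{m_1\le\cdots\le m_k\le N}t^{c_{i-r}}_{m_1}t^{c_{i+1-r}}_{m_2}\cdots t^{c_{i+k-1-r}}_{m_k}=\phi_N\phi^{-1}\bigl(E_{(c_{i-r},c_{i+1-r},\ldots,c_{i+k-1-r})}\bigr),
\]
and likewise $e^{(i)}_k(U)=\phi_N\phi^{-1}\bigl(M_{(c_{i-r},c_{i-1-r},\ldots,c_{i-k+1-r})}\bigr)$; the conventions $h^{(i)}_0=e^{(i)}_0=1$ and $h^{(i)}_n=e^{(i)}_n=0$ for $n<0$ match $E_{(\,)}=M_{(\,)}=1$ together with the convention $E_{(\cdots)}=M_{(\cdots)}=0$ in the statement. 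Making in \eqref{for:JTh} and \eqref{for:JTe} the same substitutions of superscripts and lengths as in the proof of the previous theorem (namely $r'=\mu_j+r-j+1$, $k=\lambda_i-\mu_j-i+j$ for the $h$--matrix and $r''=r-1-\mu'_j+j$, $k=\lambda'_i-\mu'_j-i+j$ for the $e$--matrix), one sees that the $(i,j)$ entry of the matrix in \eqref{for:JTh} equals $\phi_N\phi^{-1}$ of the $(i,j)$ entry of the matrix in \eqref{for:JTquasiE}, and similarly for \eqref{for:JTe} and \eqref{for:JTquasiM}.

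Next, since $\phi_N\phi^{-1}\colon\mathrm{Qsym}\to\mathbb{Z}[t_1,\ldots,t_N]$ is a ring homomorphism (indeed it coincides with the projection $\pi_N$ on $\mathrm{Qsym}$), it commutes with the determinant of a matrix of fixed size. Hence applying $\phi_N\phi^{-1}$ to the right--hand side of \eqref{for:JTquasiE} (resp. \eqref{for:JTquasiM}) produces exactly the image of the right--hand side of \eqref{for:JTh} (resp. \eqref{for:JTe}) under $u^{(i)}_k=t^{c_{i-r}}_k$, which by those formulas equals the image of $S^{(r)}_{\lambda/\mu}(U)$; and the excerpt has already shown this image to be $\phi_N\phi^{-1}(S_{\lambda/\mu}(\pmb\gamma))$. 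Thus $\phi_N\phi^{-1}$ sends the two sides of \eqref{for:JTquasiE} to the same element of $\mathbb{Z}[t_1,\ldots,t_N]$, and similarly for \eqref{for:JTquasiM}, for every $N\in\mathbb{N}$. Because $\phi_N=\pi_N\phi$, the projections $\pi_N$ jointly separate the points of $\mathfrak{P}$, and $\phi$ is injective, this forces the two sides of \eqref{for:JTquasiE} to be equal in $\mathrm{Qsym}$, and likewise for \eqref{for:JTquasiM}.

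The only real work lies in the index bookkeeping of the first two steps, which is routine and parallels the proof of the skew Jacobi--Trudi formulas for SMZFs; the point I would be most careful about is the passage from the identities at level $N$ to an identity in $\mathrm{Qsym}$, which here is harmless because a quasi--symmetric function is determined by all its truncations, so no limiting argument (and none of the region--of--convergence hypotheses needed in the SMZF case) is required. A final small check is the treatment of the degenerate rows and columns permitted by $\lambda_i=0$ or $\mu_i=0$ (equivalently $\lambda'_i=0$ or $\mu'_i=0$): one must verify that the empty-- and negative--length entry conventions agree on both sides, but this is immediate from the conventions listed above.
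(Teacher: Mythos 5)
Your proposal is correct and is exactly the argument the paper intends: specialize $u^{(i)}_k=t^{c_{i-r}}_k$ so that $S^{(r)}_{\lambda/\mu}(U)=\phi_N\phi^{-1}(S_{\lambda/\mu}({\pmb\gamma}))$, identify the entries of the NNSY Jacobi--Trudi determinants \eqref{for:JTh}, \eqref{for:JTe} with $\pi_N(E_{(\cdots)})$ and $\pi_N(M_{(\cdots)})$ via the same index substitutions as in the SMZF case, and conclude in $\mathrm{Qsym}$ since the projections $\pi_N$ separate points (so no convergence hypothesis is needed). The index bookkeeping and the degenerate-entry conventions all check out.
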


\begin{thm}
 Let $\lambda=(p_1-1,\ldots,p_t-1\,|\,q_1,\ldots,q_t)$ be a partition written
 in the Frobenius notation (see Section~\ref{subsec:Giambelli}).
 Assume that ${\pmb \gamma}=(\gamma_{ij})\in T^{\mathrm{diag}}(\lambda,\mathbb{N})$
 and write $c_{k}=\gamma_{i,i+k}$ for $k\in\mathbb{Z}$.
 Then, we have
\[
 S_{\lambda}({\pmb \gamma}) 
=\det\left[S_{(p_i,1^{q_j})}({\pmb \gamma}_{i,j})\right]_{1 \le i,j \le t},
\]
 where \!
${\pmb \gamma}_{i,j}=\,
\ytableausetup{boxsize=25pt,aligntableaux=center}
\begin{ytableau}
 c_0 & c_1 & c_2 & \cdots & c_{p_i-1} \\
 c_{-1} \\
 \vdots \\
 c_{-q_j} 
\end{ytableau}
\in T((p_i,1^{q_j}),\mathbb{N})
$.
\end{thm}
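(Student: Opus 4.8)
The plan is to derive this Giambelli identity from the Giambelli formula \eqref{for:G} for Macdonald's ninth variation of Schur functions, following the same argument used in Section~\ref{subsec:Giambelli} for the Schur multiple zeta functions; the only extra input is the observation, recorded just before the statement, that a Schur type quasi-symmetric function, restricted to finitely many variables, is literally a specialization of $S^{(r)}_{\lambda}(U)$. First I would pick integers $r,s$ large enough that $\lambda\subset(s^r)$ — for instance $r\ge\ell(\lambda)$ and $s\ge\lambda_1$, which also forces every hook $(p_i,1^{q_j})$ to lie inside $(s^r)$, since $p_i\le\lambda_1\le s$ and $1+q_j\le\ell(\lambda)\le r$ — set $\eta=r+s$, and for each $N\in\mathbb{N}$ form the upper unitriangular matrix $U=U^{(N)}$ of size $\eta$ under the specialization $u^{(i)}_k=t^{c_{i-r}}_k$.

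With this choice the tableau representation \eqref{for:TableauRepresentation}, together with the computation preceding the statement (taken with $\mu=\emptyset$), gives on one hand
\[
 S^{(r)}_{\lambda}(U)
=\sum_{(m_{ij})\in\mathrm{SSYT}_N(\lambda)}\ \prod_{(i,j)\in D(\lambda)}t^{\gamma_{ij}}_{m_{ij}}
=\phi_N\phi^{-1}\bigl(S_{\lambda}({\pmb \gamma})\bigr),
\]
and on the other hand, applying the same representation to each hook with the very same matrix $U$,
\[
 S^{(r)}_{(p_i,1^{q_j})}(U)
=\phi_N\phi^{-1}\bigl(S_{(p_i,1^{q_j})}({\pmb \gamma}_{i,j})\bigr)
\qquad(1\le i,j\le t);
\]
this last identity holds because $u^{(r-i+j)}_{m}=t^{c_{j-i}}_{m}$ and $j-i$ is precisely the content of the box $(i,j)$, so the exponents read off the hook $(p_i,1^{q_j})$ reproduce the diagonal tableau ${\pmb \gamma}_{i,j}$ displayed in the statement (the degenerate cases $q_j=0$ and $p_i=1$, where the hook becomes a single row or a single column, are included and reduce to the formulas for $E$ and $M$). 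Then I would apply \eqref{for:G} with $X=U$ to obtain $S^{(r)}_{\lambda}(U)=\det[S^{(r)}_{(p_i,1^{q_j})}(U)]_{1\le i,j\le t}$, substitute the two displays, and conclude that $\phi_N\phi^{-1}(S_{\lambda}({\pmb \gamma}))=\phi_N\phi^{-1}\bigl(\det[S_{(p_i,1^{q_j})}({\pmb \gamma}_{i,j})]_{1\le i,j\le t}\bigr)$. Since $\phi_N\phi^{-1}=\pi_N$ and a power series of bounded degree in $\mathfrak{P}$ is determined by all its truncations, letting $N\to\infty$ yields the asserted equality in $\mathrm{Qsym}$.

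The step needing genuine care — though hardly an obstacle — is the content bookkeeping: one must verify that the specialization $u^{(i)}_k=t^{c_{i-r}}_k$ reproduces each hook tableau ${\pmb \gamma}_{i,j}$ with no index shift and uniformly in $N$, so that the passage $N\to\infty$ is legitimate, and that each factor of the determinant is a genuine element of $\mathrm{Qsym}$ (which it is, since $\mathrm{Qsym}$ is an algebra). In contrast to the Schur multiple zeta version of the Giambelli formula, no convergence or real-part hypotheses on the entries $\gamma_{ij}$ are required, because the identity already holds in the ring of formal power series.
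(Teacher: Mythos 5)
Your proposal is correct and follows essentially the same route as the paper: specialize Macdonald's ninth variation via $u^{(i)}_k=t^{c_{i-r}}_k$ so that $S^{(r)}_{\nu}(U)=\phi_N\phi^{-1}(S_{\nu}(\cdot))$ for $\nu=\lambda$ and for each hook $(p_i,1^{q_j})$, apply the Giambelli formula \eqref{for:G} with $X=U$, and conclude by identifying all truncations $\pi_N$. This is exactly the argument the paper indicates for the quasi-symmetric case, mirroring the proof given for the SMZF Giambelli formula in Section~\ref{subsec:Giambelli}.
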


\begin{thm}
 Assume that ${\pmb \gamma}=(\gamma_{ij})\in T^{\mathrm{diag}}((s^r),\mathbb{N})$ and
 ${\pmb \delta}=(\delta_{ij})\in T^{\mathrm{diag}}((r^s),\mathbb{N})$
 with $c_{k}=\gamma_{i,i+k}$ and $d_{k}=\delta_{i,i+k}$ for $k\in\mathbb{Z}$.
 Write $\eta=r+s$.
 Then, we have 
\begin{align}
\label{for:dualCauchyqsym}
 \sum_{\lambda\subset (s^r)}&(-1)^{|\lambda|}
S_{\lambda}\left({\pmb \gamma}|_{\lambda}\right)S_{\lambda^{\ast}}\left({\pmb \delta}|_{\lambda^{\ast}}\right)\\
&=\det
\left[
\begin{array}{ccccccc}
 1 & E_{(c_{1-r})} & E_{(c_{1-r},c_{2-r})} & \cdots & E_{(c_{1-r},\ldots,c_{0})} & \cdots & E_{(c_{1-r},\ldots,c_{\eta-1-r})} \\
 0 & 1 & E_{(c_{2-r})} & \cdots & E_{(c_{2-r},\ldots,c_{0})} & \cdots & E_{(c_{2-r},\ldots,c_{\eta-1-r})} \\
 \vdots & \ddots & \ddots & \ddots & \vdots & & \vdots \\
 0 & \cdots & 0 & 1 & E_{(c_0)} & \cdots &  E_{(c_0,\ldots,c_{\eta-1-r})} \\
\hline 
 1 & E_{(d_{1-s})} & E_{(d_{1-s},d_{2-s})} & \cdots & E_{(d_{1-s},\ldots,d_{0})} & \cdots & E_{(d_{1-s},\ldots,d_{\eta-1-s})} \\
 0 & 1 & E_{(d_{2-s})} & \cdots & E_{(d_{2-s},\ldots,d_{0})} & \cdots & E_{(d_{2-s},\ldots,d_{\eta-1-s})} \\
 \vdots & \ddots & \ddots & \ddots & \vdots & & \vdots \\
 0 & \cdots & 0 & 1 & E_{(d_0)} & \cdots & E_{(d_0,\ldots,d_{\eta-1-s})} 
\end{array}
\right].
\nonumber
\end{align}
 Here, ${\pmb \gamma}|_{\lambda}\in T^{\mathrm{diag}}(\lambda,\mathbb{N})$ and ${\pmb \delta}|_{\lambda^{\ast}}\in T^{\mathrm{diag}}(\lambda^{\ast},\mathbb{N})$
 are the shape restriction of ${\pmb \gamma}$ and ${\pmb \delta}$ to $\lambda$ and $\lambda^{\ast}$, respectively.
\end{thm}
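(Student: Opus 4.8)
The plan is to obtain \eqref{for:dualCauchyqsym} by specializing the dual Cauchy formula \eqref{for:DCspecial} for Macdonald's ninth variation of Schur functions, following the pattern of the proof of \eqref{for:DCforzeta} but carrying everything out at the level of formal power series, so that no analytic hypothesis on the variables is needed.

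First I would fix $N\in\mathbb{N}$ and specialize the parameters entering $U=U^{(N)}$ and $V=V^{(N)}$ (both of size $\eta=r+s$, both built from $t_1,\ldots,t_N$) by
\[
 u^{(i)}_k=t_k^{c_{i-r}}, \qquad v^{(i)}_k=t_k^{d_{i-s}} \qquad (1\le k\le N,\ 1\le i\le\eta-1),
\]
the asymmetry between $r$ and $s$ being forced by the fact that $S^{(s)}_{\lambda^{\ast}}$ carries the parameter $s$. With this choice, the tableau representation \eqref{for:TableauRepresentation} for $S^{(r)}_{\lambda}$ and its evident $S^{(s)}$-analogue, exactly as used in the paragraph preceding \eqref{for:JTquasiE}, give
\[
 S^{(r)}_{\lambda}(U)=\phi_N\phi^{-1}\bigl(S_{\lambda}({\pmb\gamma}|_{\lambda})\bigr)=\pi_N\bigl(S_{\lambda}({\pmb\gamma}|_{\lambda})\bigr), \qquad
 S^{(s)}_{\lambda^{\ast}}(V)=\pi_N\bigl(S_{\lambda^{\ast}}({\pmb\delta}|_{\lambda^{\ast}})\bigr)
\]
for every $\lambda\subset(s^r)$, where I use $\phi_N=\pi_N\phi$ and $S_{\lambda}({\pmb\gamma}|_{\lambda}),S_{\lambda^{\ast}}({\pmb\delta}|_{\lambda^{\ast}})\in\mathrm{Qsym}$. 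Next I would record, from the truncated form of the expansion of $h^{(i)}_n(U)$ quoted just before \eqref{for:JTh} together with the above specialization, that
\[
 h^{(i)}_n(U)=\pi_N\bigl(E_{(c_{i-r},c_{i-r+1},\ldots,c_{i-r+n-1})}\bigr), \qquad
 h^{(i)}_n(V)=\pi_N\bigl(E_{(d_{i-s},d_{i-s+1},\ldots,d_{i-s+n-1})}\bigr).
\]

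Substituting these identifications into \eqref{for:DCspecial} then shows that $\pi_N$ applied to the left-hand side of \eqref{for:dualCauchyqsym} equals $\pi_N$ applied to its right-hand side. Indeed, for $1\le i\le r$ the $i$th row of the top block of the determinant in \eqref{for:dualCauchyqsym} has $0$ in each column $j<i$, a $1$ in column $i$, and $E_{(c_{i-r},\ldots,c_{i-r+(j-i)-1})}$ in column $j>i$, which after applying $\pi_N$ becomes exactly the $i$th row $0,\ldots,0,1,h^{(i)}_1(U),h^{(i)}_2(U),\ldots$ of the matrix in \eqref{for:DCspecial}; the bottom block is handled in the same way with $V$, $d$, $s$ in place of $U$, $c$, $r$. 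Since $\pi_N$ is a ring homomorphism it commutes with the determinant and with the finite sum over $\lambda\subset(s^r)$, and the factor $(-1)^{|\lambda|}$ is inherited verbatim. Hence $\pi_N$ of \eqref{for:dualCauchyqsym} holds for every $N\in\mathbb{N}$. As both sides of \eqref{for:dualCauchyqsym} lie in $\mathfrak{P}$, and an element of $\mathfrak{P}$ is determined by all its truncations $\pi_N$, the identity \eqref{for:dualCauchyqsym} follows.

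The only genuinely delicate point is the index bookkeeping: one must check that the shifts produced by $u^{(i)}_k=t_k^{c_{i-r}}$ and $v^{(i)}_k=t_k^{d_{i-s}}$ align precisely with the entries $E_{(c_{1-r})},E_{(c_{1-r},c_{2-r})},\ldots$ and $E_{(d_{1-s})},E_{(d_{1-s},d_{2-s})},\ldots$ displayed in \eqref{for:dualCauchyqsym}. This is routine but is where an error would most easily creep in; everything else is a direct translation of \eqref{for:DCspecial}.
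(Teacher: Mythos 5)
Your proposal is correct and is essentially the paper's own argument: the paper obtains this theorem by exactly the specialization $u^{(i)}_k=t_k^{c_{i-r}}$, $v^{(i)}_k=t_k^{d_{i-s}}$ of the dual Cauchy formula \eqref{for:DCspecial} for Macdonald's ninth variation, identifying $S^{(r)}_{\lambda}(U)$ and $h^{(i)}_n(U)$ with the truncations $\phi_N\phi^{-1}(S_{\lambda}({\pmb\gamma}|_{\lambda}))$ and $\pi_N(E_{(c_{i-r},\ldots,c_{i+n-1-r})})$, just as you do. Your index bookkeeping (including the $s$-shift in $v^{(i)}_k$, which the paper's zeta-value analogue states with a small typo as $b_{i-r}$) checks out, and the concluding observation that an element of $\mathfrak{P}$ is determined by its truncations correctly replaces the limit $N\to\infty$ used in the zeta-value setting.
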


\begin{rmk}
 In \cite{{MalvenutoReutenauer1998}},
 a more general type of quasi-symmetric function is defined by a set of equality and inequality conditions. 
 One can see that this includes both the Schur type quasi-symmetric functions and the fundamental quasi-symmetric functions as special cases
 and actually leads a generalized multiple zeta function via $\rho\phi^{-1}$.
 However, because it is too complicated in general,
 it seems to be difficult to expect that such generalized quasi-symmetric and multiple zeta functions
 satisfy the similar kind of determinant formulas as above.
\end{rmk}

 We know that $\mathrm{Qsym}$ has a commutative Hopf algebra structure (see \cite{H2,Kassel1995,MilnorMoore1965,Sweedler1969}).
 The antipode $S$, which is an automorphism of $\mathrm{Qsym}$ satisfying $S^2=\mathrm{id}$,
 is explicitly given as follows.

\begin{thm}
[{\cite[Theorem~3.1]{H2}}]
\label{thm:Hoffman} 
 For a composition ${\pmb \gamma}=(\gamma_1,\gamma_2,\ldots,\gamma_n)$, we have 
\begin{itemize}
\item[$(1)$]
 $\displaystyle{S(M_{{\pmb \gamma}})
=\sum_{{\pmb \gamma}_1 \,\sqcup\, {\pmb \gamma}_2 \,\sqcup\, \cdots \,\sqcup\, {\pmb \gamma}_m\,=\,{\pmb \gamma}}(-1)^m M_{{\pmb \gamma}_1}M_{{\pmb \gamma}_2}\cdots M_{{\pmb \gamma}_m}}$.
\item[$(2)$]
 $S(M_{{\pmb \gamma}})
=(-1)^{n}E_{\overline{{\pmb \gamma}}}$.
\end{itemize}
 Here, 
 ${\pmb \gamma}_1\sqcup {\pmb \gamma}_2\sqcup \cdots \sqcup {\pmb \gamma}_m$ is just the juxtaposition of non-empty compositions 
 ${\pmb \gamma}_1,{\pmb \gamma}_2,\ldots,{\pmb \gamma}_m$ and 
 $\overline{\pmb \gamma}=(\gamma_n,\gamma_{n-1},\ldots,\gamma_1)$.
\end{thm}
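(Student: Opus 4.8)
The plan is to work inside the connected graded commutative Hopf algebra $\mathrm{Qsym}$, whose antipode $S$ exists and is unique, and to exploit that on the monomial basis the coproduct is the deconcatenation coproduct. First I would record that, for ${\pmb\gamma}=(\gamma_1,\ldots,\gamma_n)$,
\[
 \Delta(M_{\pmb\gamma})=\sum_{i=0}^{n}M_{(\gamma_1,\ldots,\gamma_i)}\otimes M_{(\gamma_{i+1},\ldots,\gamma_n)},
\]
which is immediate from the definition of $M_{\pmb\gamma}$ by cutting the strictly increasing index range $m_1<\cdots<m_n$ between the variables assigned to the two tensor factors (here $M_{()}=1$ and $\epsilon(M_{\pmb\gamma})=0$ for $n\ge1$). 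The same argument shows that $E_{\pmb\gamma}$ also carries the deconcatenation coproduct, a point I will reuse.

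For part $(1)$ I would induct on $n=\ell({\pmb\gamma})$. Writing out $\mu\circ(S\otimes\mathrm{id})\circ\Delta(M_{\pmb\gamma})=\eta\epsilon(M_{\pmb\gamma})=0$ for $n\ge1$ and using $S(1)=1$ gives the recursion
\[
 S(M_{\pmb\gamma})=-M_{\pmb\gamma}-\sum_{i=1}^{n-1}S\bigl(M_{(\gamma_1,\ldots,\gamma_i)}\bigr)\,M_{(\gamma_{i+1},\ldots,\gamma_n)}.
\]
Inserting the inductive hypothesis and using that each decomposition ${\pmb\gamma}={\pmb\gamma}_1\sqcup\cdots\sqcup{\pmb\gamma}_m$ with $m\ge2$ occurs exactly once by first choosing its last block ${\pmb\gamma}_m=(\gamma_{i+1},\ldots,\gamma_n)$ and then decomposing the prefix into $m-1$ blocks — the identity $(-1)^m=-(-1)^{m-1}$ absorbing the leading minus sign — the right-hand side collapses to the claimed $\sum_{{\pmb\gamma}_1\sqcup\cdots\sqcup{\pmb\gamma}_m={\pmb\gamma}}(-1)^mM_{{\pmb\gamma}_1}\cdots M_{{\pmb\gamma}_m}$, the $m=1$ term being $-M_{\pmb\gamma}$. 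Equivalently one may quote Takeuchi's formula for the antipode of a connected graded bialgebra, noting that the iterated reduced coproduct of $M_{\pmb\gamma}$ sums over decompositions of ${\pmb\gamma}$ into ordered tuples of nonempty blocks.

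Part $(2)$ is the crux, and the plan is to deduce it from $(1)$ by passing to the $E$-basis. I would verify directly that the linear map $T\colon M_{\pmb\gamma}\mapsto(-1)^{\ell({\pmb\gamma})}E_{\overline{\pmb\gamma}}$ satisfies $\mu\circ(T\otimes\mathrm{id})\circ\Delta=\eta\epsilon$; since the antipode exists, any such left inverse of the identity under convolution must coincide with it, so $T=S$. Using the deconcatenation coproduct of $M_{\pmb\gamma}$, the required identity becomes
\[
 \sum_{i=0}^{n}(-1)^{i}\,E_{(\gamma_i,\gamma_{i-1},\ldots,\gamma_1)}\,M_{(\gamma_{i+1},\ldots,\gamma_n)}=0\qquad(n\ge1),
\]
which I would prove by induction on $n$, rewriting each factor in the opposite basis via \eqref{for:EM} and invoking the product rules of the two bases — the quasi-shuffle (stuffle) product for $M$, and its signed analogue (for instance $E_{(a)}E_{(b)}=E_{(a,b)}+E_{(b,a)}-E_{(a+b)}$) for $E$. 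An alternative route is to expand the products in $(1)$ directly by the stuffle product, writing $S(M_{\pmb\gamma})=\sum_{\pmb w}c_{\pmb w}M_{\pmb w}$, and to show, by a sign-reversing involution on the underlying data, that $c_{\pmb w}=0$ unless ${\pmb w}\preceq\overline{\pmb\gamma}$ and $c_{\pmb w}=(-1)^n$ otherwise; then $S(M_{\pmb\gamma})=(-1)^n\sum_{{\pmb w}\preceq\overline{\pmb\gamma}}M_{\pmb w}=(-1)^nE_{\overline{\pmb\gamma}}$ by the first identity in \eqref{for:EM}. Either way, the one genuinely delicate point is controlling the cancellation of the ``merge'' contributions in the quasi-shuffle — equivalently, making the inductive telescoping of the displayed identity close up — while the rest is the standard Hopf-algebraic bookkeeping recalled above.
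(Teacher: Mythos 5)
The paper does not actually prove this statement; it is imported verbatim from Hoffman \cite[Theorem~3.1]{H2}, so there is no in-paper argument to compare yours against. Your proposal is structurally sound and mostly complete. The deconcatenation coproduct on the $M$-basis (and likewise on the $E$-basis) is correct, the recursion $S(M_{\pmb\gamma})=-M_{\pmb\gamma}-\sum_{i=1}^{n-1}S(M_{(\gamma_1,\ldots,\gamma_i)})M_{(\gamma_{i+1},\ldots,\gamma_n)}$ is the right consequence of $\mu\circ(S\otimes\mathrm{id})\circ\Delta=\eta\epsilon$, and your induction for part $(1)$ closes exactly as described. For part $(2)$ the reduction is also valid: a left convolution inverse of $\mathrm{id}$ must coincide with the antipode, so it suffices to prove the displayed identity
\[
\sum_{i=0}^{n}(-1)^{i}\,E_{(\gamma_i,\gamma_{i-1},\ldots,\gamma_1)}\,M_{(\gamma_{i+1},\ldots,\gamma_n)}=0\qquad(n\ge1),
\]
which is indeed true. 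The one genuine shortfall is that you leave precisely this identity as a sketch, and the two strategies you float (an induction through quasi-shuffle expansions, or a sign-reversing involution) are not set up concretely; since this identity carries the real content of part $(2)$, the argument as written is incomplete. The gap closes cleanly without any quasi-shuffle bookkeeping if you compare terms at the level of index assignments: a term of the $i$th summand is indexed by positive integers $(a_1,\ldots,a_n)$ attached to the positions of ${\pmb\gamma}$ subject to $a_1\ge\cdots\ge a_i$ and $a_{i+1}<\cdots<a_n$, contributing the monomial $\prod_{j}t_{a_j}^{\gamma_j}$ independently of $i$. For a fixed assignment, the set of admissible cuts $i$ is the interval $\{q,q+1,\ldots,p\}$, where $p\ge 1$ is the length of the longest weakly decreasing prefix and $q\le n-1$ is the smallest index whose strict suffix $a_{q+1}<\cdots<a_n$ is increasing; maximality of $p$ and minimality of $q$ force $p\le q+1$ and $p\ne q$, so the interval is either empty or consists of exactly two consecutive integers, and every assignment cancels in the alternating sum. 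With that lemma supplied, both parts of your argument are complete and correct.
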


 Combining these formulas, 
 we reach the expressions  
\begin{align}
\label{for:MsumE}
 M_{{\pmb \gamma}}
&=\sum_{{\pmb \gamma}_1 \,\sqcup\, {\pmb \gamma}_2 \,\sqcup\, \cdots \,\sqcup\, {\pmb \gamma}_m\,=\,\overline{\pmb \gamma}}(-1)^{n-m}
 E_{{\pmb \gamma}_1}E_{{\pmb \gamma}_2}\cdots E_{{\pmb \gamma}_m},\\
\label{for:EsumM}
 E_{{\pmb \gamma}}
&=\sum_{{\pmb \gamma}_1 \,\sqcup\, {\pmb \gamma}_2 \,\sqcup\, \cdots \,\sqcup\, {\pmb \gamma}_m\,=\,\overline{\pmb \gamma}}(-1)^{n-m}
M_{{\pmb \gamma}_1}M_{{\pmb \gamma}_2}\cdots M_{{\pmb \gamma}_m}.
\end{align}
 One sees by induction on $n$ that  
 \eqref{for:MsumE} and \eqref{for:EsumM} are respectively equivalent to the formulas 
\begin{align*}
 M_{(\gamma_1,\ldots,\gamma_n)}
&=
\left|
\begin{array}{cccccc}
 E_{(\gamma_1)} & E_{(\gamma_2,\gamma_1)} & & \cdots & \cdots & E_{(\gamma_n,\ldots,\gamma_2,\gamma_1)} \\
 1 & E_{(\gamma_2)} & & \cdots & \cdots & E_{(\gamma_n,\ldots,\gamma_2)} \\
 & 1 & \ddots & & & \vdots \\
 & & \ddots & 1& E_{(\gamma_{n-1})} & E_{(\gamma_{n},\gamma_{n-1})} \\
 \scalebox{2}{$0$} & & & & 1 & E_{(\gamma_{n})}
\end{array}
\right|,\\[5pt]
 E_{(\gamma_1,\ldots,\gamma_n)}
&=
\left|
\begin{array}{cccccc}
 M_{(\gamma_1)} & M_{(\gamma_2,\gamma_1)} & & \cdots & \cdots & M_{(\gamma_n,\ldots,\gamma_2,\gamma_1)} \\
 1 & M_{(\gamma_2)} & & \cdots & \cdots & M_{(\gamma_n,\ldots,\gamma_2)} \\
  & 1 & \ddots & & & \vdots \\
 & & \ddots & 1& M_{(\gamma_{n-1})} & M_{(\gamma_{n},\gamma_{n-1})} \\
 \scalebox{2}{$0$} & & & & 1 & M_{(\gamma_{n})}
\end{array}
\right|,
\end{align*}
 which are obtained from the Jacobi-Trudi formulas \eqref{for:JTquasiE} and \eqref{for:JTquasiM}, respectively.

\begin{exam}
 When $n=3$, we have 
\begin{align*}
 M_{(\gamma_1,\gamma_2,\gamma_3)}
&=E_{(\gamma_3,\gamma_2,\gamma_1)}
-E_{(\gamma_3,\gamma_2)}E_{(\gamma_1)}-E_{(\gamma_3)}E_{(\gamma_2,\gamma_1)}
+E_{(\gamma_3)}E_{(\gamma_2)}E_{(\gamma_1)}\\
&=
\left|
\begin{array}{ccc}
 E_{(\gamma_1)} & E_{(\gamma_2,\gamma_1)} & E_{(\gamma_3,\gamma_2,\gamma_1)} \\
 1 & E_{(\gamma_2)} & E_{(\gamma_3,\gamma_2)} \\
 0 & 1 & E_{(\gamma_{3})}
\end{array}
\right|,\\
 E_{(\gamma_1,\gamma_2,\gamma_3)}
&=M_{(\gamma_3,\gamma_2,\gamma_1)}
-M_{(\gamma_3,\gamma_2)}M_{(\gamma_1)}-M_{(\gamma_3)}M_{(\gamma_2,\gamma_1)}
+M_{(\gamma_3)}M_{(\gamma_2)}M_{(\gamma_1)}\\
&=
\left|
\begin{array}{ccc}
 M_{(\gamma_1)} & M_{(\gamma_2,\gamma_1)} & M_{(\gamma_3,\gamma_2,\gamma_1)} \\
 1 & M_{(\gamma_2)} & M_{(\gamma_3,\gamma_2)} \\
 0 & 1 & M_{(\gamma_{3})}
\end{array}
\right|.
\end{align*} 
\end{exam}

 For a skew Young diagram $\nu$, 
 we denote by $\nu^{\#}$ the transpose of $\nu$ with respect to the anti-diagonal.
 Similarly, the anti-diagonal transpose of a skew Young tableaux $T\in T(\nu,X)$ is denoted by $T^{\#}\in T(\nu^{\#},X)$. 
 In the following discussion,
 we also encounter $(T^{\#})'\in T((\nu^{\#})',X)$, the conjugate of $T^{\#}$. 
 For example,
\[
\ytableausetup{boxsize=normal,aligntableaux=center}
\begin{ytableau}
 \gamma_{11} & \gamma_{12} & \gamma_{13} \\
 \gamma_{21} 
\end{ytableau}^{\,\#}
=
\,
\begin{ytableau}
 \none & \gamma_{13} \\
 \none & \gamma_{12} \\
 \gamma_{21} & \gamma_{11}
\end{ytableau}
\,,\quad
\left(~
\begin{ytableau}
 \gamma_{11} & \gamma_{12} & \gamma_{13} \\
 \gamma_{21} 
\end{ytableau}^{\,\#}
~\right)'
=
\,
\begin{ytableau}
 \none & \none & \gamma_{21} \\ 
 \gamma_{13} & \gamma_{12} & \gamma_{11} 
\end{ytableau}
\]
 Namely, $(T^{\#})'$ is just the rotation of $T$ by $\pi$ around the center of $\nu$.
 Now, the image of the Schur type quasi-symmetric functions by the antipode $S$ is explicitly calculated as follows.

\begin{thm} 
\label{thm:SSchurquasi}
 For a skew Young diagram $\nu$,
 we have 
\begin{equation}
\label{for:SSnu}
 S(S_{\nu}({\pmb \gamma}))
=(-1)^{|\nu|}S_{\nu^{\#}}({\pmb \gamma}^{\#}).
\end{equation}
 Moreover, when ${\pmb \gamma}\in T^{\mathrm{diag}}(\nu,\mathbb{N})$, we have 
\begin{align}
\label{for:SSnuH}
 S(S_{\nu}({\pmb \gamma}))
&=(-1)^{|\nu|}\sum_{\Theta=(\theta_1,\theta_2,\ldots,\theta_r)\in \mathrm{Rim}^{\nu^{\#}}_H}\varepsilon_H(\Theta) 
E_{\theta_1({\pmb \gamma}^{\#})}E_{\theta_2({\pmb \gamma}^{\#})} \cdots E_{\theta_r({\pmb \gamma}^{\#})},\\
\label{for:SSnuE}
 S(S_{\nu}({\pmb \gamma}))
&=(-1)^{|\nu|}\sum_{\Theta=(\theta_1,\theta_2,\ldots,\theta_s)\in \mathrm{Rim}^{\nu^{\#}}_E}\varepsilon_E(\Theta) 
M_{\theta_1({\pmb \gamma}^{\#})}M_{\theta_2({\pmb \gamma}^{\#})} \cdots M_{\theta_s({\pmb \gamma}^{\#})}.
\end{align}
\end{thm}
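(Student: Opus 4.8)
The plan is to obtain \eqref{for:SSnu} directly from the two expansions in \eqref{for:SMorSE} combined with Hoffman's antipode formula, and then to deduce \eqref{for:SSnuH} and \eqref{for:SSnuE} by feeding the diagonal Jacobi--Trudi formulas \eqref{for:JTquasiE} and \eqref{for:JTquasiM} into \eqref{for:SSnu}. First I would prove \eqref{for:SSnu}. Applying the antipode $S$ to the first expansion in \eqref{for:SMorSE} and using Theorem~\ref{thm:Hoffman}$(2)$ gives
\[
 S(S_{\nu}({\pmb \gamma}))
=\sum_{{\pmb u}\,\preceq\,{\pmb \gamma}}S(M_{\pmb u})
=\sum_{{\pmb u}\,\preceq\,{\pmb \gamma}}(-1)^{\ell({\pmb u})}E_{\overline{\pmb u}},
\]
where $\overline{\pmb u}$ denotes the reversal of ${\pmb u}$. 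On the other hand, the second expansion in \eqref{for:SMorSE}, applied to the shape $\nu^{\#}$ and the tableau ${\pmb \gamma}^{\#}$, together with $|\nu^{\#}|=|\nu|$, gives
\[
 (-1)^{|\nu|}S_{\nu^{\#}}({\pmb \gamma}^{\#})
=\sum_{{\pmb v}\,\preceq\,({\pmb \gamma}^{\#})'}(-1)^{2|\nu|-\ell({\pmb v})}E_{\pmb v}
=\sum_{{\pmb v}\,\preceq\,({\pmb \gamma}^{\#})'}(-1)^{\ell({\pmb v})}E_{\pmb v}.
\]
Hence it suffices to exhibit a length--preserving bijection between $\{{\pmb u}\mid{\pmb u}\preceq{\pmb \gamma}\}$ and $\{{\pmb v}\mid{\pmb v}\preceq({\pmb \gamma}^{\#})'\}$ under which ${\pmb v}=\overline{\pmb u}$.

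This bijection comes from the $180^{\circ}$ rotation: when $\nu$ is drawn in an $r\times c$ box, the rotation sends a cell $(i,j)$ to $(r+1-i,c+1-j)$, carries $\nu$ to $(\nu^{\#})'$ and $({\pmb \gamma}^{\#})'$ to the rotation of ${\pmb \gamma}$, maps rows to rows and columns to columns (reversing the order within each), and therefore induces a bijection $\mathcal{F}(\nu)\to\mathcal{F}((\nu^{\#})')$ sending an admissible filling $f$ to the filling which, after rotation, equals $n+1-f$. Reading off tableaux, this yields $V(({\pmb \gamma}^{\#})')=\{\overline{v}\mid v\in V({\pmb \gamma})\}$. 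Now if ${\pmb u}=(u_1,\ldots,u_m)\preceq{\pmb \gamma}$ is witnessed by some $v\in V({\pmb \gamma})$ and a partition of $\{1,\ldots,n\}$ into consecutive blocks, then the reversed blocks of $\overline{v}$ witness $\overline{\pmb u}\preceq({\pmb \gamma}^{\#})'$: condition (i) in the definition of $\preceq$ persists because each block sum is unchanged, condition (iii) is merely the reversal of a partition of $\{1,\ldots,n\}$, and condition (ii) — that no two entries of a block lie in a common column — is preserved because the rotation maps columns bijectively to columns. Since the rotation and the reversal are involutions, this is a bijection, and it visibly preserves length; combining it with the two displays above proves \eqref{for:SSnu}.

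Finally, for \eqref{for:SSnuH} and \eqref{for:SSnuE} assume ${\pmb \gamma}\in T^{\mathrm{diag}}(\nu,\mathbb{N})$. The anti-diagonal transpose carries the diagonals of $\nu$ to the diagonals of $\nu^{\#}$ (shifting the diagonal index by a constant), so ${\pmb \gamma}^{\#}\in T^{\mathrm{diag}}(\nu^{\#},\mathbb{N})$ and the Jacobi--Trudi formulas \eqref{for:JTquasiE}, \eqref{for:JTquasiM} apply to $S_{\nu^{\#}}({\pmb \gamma}^{\#})$. Expanding the determinant in \eqref{for:JTquasiE} over all permutations and discarding the terms containing a vanishing entry — exactly as in the passage producing \eqref{for:keyexpressionHrim}, using the bijection $\tau_H$ of Lemma~\ref{lem:rimtypeH} between the $H$-rim decompositions of $\nu^{\#}$ and the relevant permutations — rewrites $S_{\nu^{\#}}({\pmb \gamma}^{\#})$ as $\sum_{\Theta\in\mathrm{Rim}^{\nu^{\#}}_H}\varepsilon_H(\Theta)\,E_{\theta_1({\pmb \gamma}^{\#})}\cdots E_{\theta_r({\pmb \gamma}^{\#})}$; likewise expanding \eqref{for:JTquasiM} via $\tau_E$ of Lemma~\ref{lem:rimtypeE} as in \eqref{for:keyexpressionErim} gives the corresponding sum over $\mathrm{Rim}^{\nu^{\#}}_E$ with monomial quasi-symmetric functions. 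Multiplying by $(-1)^{|\nu|}$ and invoking \eqref{for:SSnu} yields \eqref{for:SSnuH} and \eqref{for:SSnuE}.

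The step I expect to be the main obstacle is the bijection used in the proof of \eqref{for:SSnu}, and specifically the verification that condition (ii) of the relation $\preceq$ transports correctly under the $180^{\circ}$ rotation: one must unravel the definitions of $\mathcal{F}(\cdot)$, $V(\cdot)$ and $\preceq$ and check that the rotation really induces a column-to-column bijection of cells compatible with the admissible readings. Everything else — the antipode computation, the sign bookkeeping, and the determinant expansions — is a routine assembly of formulas already established in the paper.
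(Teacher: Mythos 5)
Your proposal is correct and follows essentially the same route as the paper: apply Hoffman's formula $S(M_{\pmb u})=(-1)^{\ell({\pmb u})}E_{\overline{\pmb u}}$ to the $M$-expansion of $S_{\nu}({\pmb \gamma})$, match the result with the $E$-expansion of $S_{\nu^{\#}}({\pmb \gamma}^{\#})$ via the equivalence $\overline{\pmb u}\preceq{\pmb \gamma}\Leftrightarrow{\pmb u}\preceq({\pmb \gamma}^{\#})'$, and then derive \eqref{for:SSnuH}, \eqref{for:SSnuE} from the rim-decomposition (Jacobi--Trudi) expansions applied to $\nu^{\#}$. The only difference is that you spell out, via the $180^{\circ}$ rotation, the combinatorial equivalence that the paper merely asserts "can be verified directly" — a welcome elaboration, and your verification is sound.
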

\begin{proof}
 From \eqref{for:SMorSE} and Theorem~\ref{thm:Hoffman} (2),
 we have 
\begin{align*}
 S(S_{\nu}({\pmb \gamma}))
&=\sum_{{\pmb u} \,\preceq\, {\pmb \gamma}}S(M_{\pmb u})\\
&=\sum_{\overline{\pmb u} \,\preceq\, {\pmb \gamma}}(-1)^{\ell({\pmb u})}E_{{\pmb u}}\\
&=(-1)^{|\nu|}\sum_{{\pmb u} \,\preceq\, ({\pmb \gamma}^{\#})'}(-1)^{|\nu|-\ell({\pmb u})}E_{{\pmb u}}\\
&=(-1)^{|\nu|}S_{\nu^{\#}}({\pmb \gamma}^{\#}).
\end{align*}  
 Notice that, in the third equality,
 we have used the fact that 
 $\overline{\pmb u} \preceq {\pmb \gamma}$ if and only if ${\pmb u} \preceq ({\pmb \gamma}^{\#})'$,
 which can be verified directly.
 This shows \eqref{for:SSnu}.
 Now, the rest of assertions are immediately obtained from  
\begin{align*}
 S_{\nu}({\pmb \gamma})
&=\sum_{\Theta=(\theta_1,\theta_2,\ldots,\theta_r)\in \mathrm{Rim}^{\nu}_H}
\varepsilon_H(\Theta)
 E_{\theta_1({\pmb \gamma})}E_{\theta_2({\pmb \gamma})}\cdots E_{\theta_r({\pmb \gamma})},\\
 S_{\nu}({\pmb \gamma})
&=\sum_{\Theta=(\theta_1,\theta_2,\ldots,\theta_s)\in \mathrm{Rim}^{\nu}_E}
\varepsilon_E(\Theta)
 M_{\theta_1({\pmb \gamma})}M_{\theta_2({\pmb \gamma})}\cdots M_{\theta_s({\pmb \gamma})},
\end{align*}
 which are similarly obtained as \eqref{for:keyexpressionHrim} and \eqref{for:keyexpressionErim}
 (hence we need the assumption ${\pmb \gamma}\in T^{\mathrm{diag}}(\nu,\mathbb{N})$) 
 and lead the Jacobi-Trudi formulas \eqref{for:JTquasiE} and \eqref{for:JTquasiM}
 for the Schur type quasi-symmetric functions.
 This completes the proof.
\end{proof}

\begin{rmk}
 The formula \eqref{for:SSnuE} with $\nu=(1^n)$ is nothing but the one in Theorem~\ref{thm:Hoffman} (1).
\end{rmk}

\begin{exam}
 When $\nu=(3,1)$, we have from \eqref{for:SSnu} 
\begin{align*}
\ytableausetup{boxsize=normal,aligntableaux=center}
 S
\left(
 S_{(3,1)}
\left(~
\begin{ytableau}
 \gamma_{11} & \gamma_{12} & \gamma_{13} \\
 \gamma_{21} 
\end{ytableau}
~\right)
\right)
&=S_{(2,2,2)/(1,1)}
\left(~
\begin{ytableau}
 \none & \gamma_{13} \\
 \none & \gamma_{12} \\
 \gamma_{21} & \gamma_{11} 
\end{ytableau}
~\right)
\\
&=E_{(\gamma_{21},\gamma_{13},\gamma_{12},\gamma_{11})}-E_{(\gamma_{21}+\gamma_{13},\gamma_{12},\gamma_{11})}-E_{(\gamma_{21},\gamma_{13}+\gamma_{12},\gamma_{11})}\\
&\ \ \ -E_{(\gamma_{21},\gamma_{13},\gamma_{12}+\gamma_{11})}+E_{(\gamma_{21}+\gamma_{13}+\gamma_{12},\gamma_{11})}+E_{(\gamma_{21}+\gamma_{13},\gamma_{12}+\gamma_{11})}\\
&\ \ \ +E_{(\gamma_{21},\gamma_{13}+\gamma_{12}+\gamma_{11})}+E_{(\gamma_{13},\gamma_{21},\gamma_{12},\gamma_{11})}-E_{(\gamma_{13},\gamma_{21},\gamma_{12}+\gamma_{11})}\\
&\ \ \ +E_{(\gamma_{13},\gamma_{12},\gamma_{21},\gamma_{11})}-E_{(\gamma_{13}+\gamma_{12},\gamma_{21},\gamma_{11})}-E_{(\gamma_{13},\gamma_{12}+\gamma_{21},\gamma_{11})}\\
&=M_{(\gamma_{21},\gamma_{13},\gamma_{12},\gamma_{11})}+M_{(\gamma_{21}+\gamma_{13},\gamma_{12},\gamma_{11})}+M_{(\gamma_{13},\gamma_{21}+\gamma_{12},\gamma_{11})}\\
&\ \ \ +M_{(\gamma_{13},\gamma_{21},\gamma_{12},\gamma_{11})}+M_{(\gamma_{13},\gamma_{12},\gamma_{21}+\gamma_{11})}+M_{(\gamma_{13},\gamma_{21}+\gamma_{12},\gamma_{11})}.
\end{align*} 
 Here, the second and third equations are similarly obtained as in Example~\ref{ex:skewSMZ}. 
 On the other hand, 
 we have from \eqref{for:SSnuH} 
\begin{align*}
\ytableausetup{boxsize=normal,aligntableaux=center}
 S
\left(
 S_{(3,1)}
\left(~
\begin{ytableau}
 \gamma_{11} & \gamma_{12} & \gamma_{13} \\
 \gamma_{21} 
\end{ytableau}
~\right)
\right)
&=E_{(\gamma_{13})}E_{(\gamma_{12})}E_{(\gamma_{21},\gamma_{11})}
-E_{(\gamma_{12},\gamma_{13})}E_{(\gamma_{21},\gamma_{11})}\\
&\ \ \ -E_{(\gamma_{13})}E_{(\gamma_{21},\gamma_{11},\gamma_{12})}
+E_{(\gamma_{21},\gamma_{11},\gamma_{12},\gamma_{13})}
\end{align*}
 where each term corresponds to the $H$-rim decomposition 
 \ytableausetup{mathmode,boxsize=10pt,aligntableaux=center} 
\!\!\!\!\!
$\begin{ytableau}
 \none & 1 \\
 \none & 2 \\
 3 & 3 
\end{ytableau}
$\,, 
$\begin{ytableau}
 \none & 2 \\
 \none & 2 \\
 3 & 3 
\end{ytableau}
$\,,
$\begin{ytableau}
 \none & 1 \\
 \none & 3 \\
 3 & 3 
\end{ytableau}
$\,
 and 
$\begin{ytableau}
 \none & 3 \\
 \none & 3 \\
 3 & 3 
\end{ytableau}
$\, of $(3,1)^{\#}=(2,2,2)/(1,1)$, respectively,
 and from \eqref{for:SSnuE} 
\begin{align*}
\ytableausetup{boxsize=normal,aligntableaux=center}
 S
\left(
 S_{(3,1)}
\left(~
\begin{ytableau}
 \gamma_{11} & \gamma_{12} & \gamma_{13} \\
 \gamma_{21} 
\end{ytableau}
~\right)
\right)
&=M_{(\gamma_{21})}M_{(\gamma_{13},\gamma_{12},\gamma_{11})}-M_{(\gamma_{13},\gamma_{12},\gamma_{11},\gamma_{21})},
\end{align*}
 where each term to the $E$-rim decomposition 
 \ytableausetup{mathmode,boxsize=10pt,aligntableaux=center} 
\!\!\!\!\!
$\begin{ytableau}
 \none & 2 \\
 \none & 2 \\
 1 & 2 
\end{ytableau}
$\,
 and 
$\begin{ytableau}
 \none & 2 \\
 \none & 2 \\
 2 & 2 
\end{ytableau}
$\,, respectively.
\end{exam}

\begin{rmk}
 The equation \eqref{for:SSnu} is essentially obtained by
 Malvenuto and Reutenauer \cite[Theorem~3.1]{MalvenutoReutenauer1998} for their quasi-symmetric functions.
 Notice that $\nu^{\#}$ is called the conjugate of $\nu$ in their notion.
 If Jacobi-Trudi formulas are obtained for such quasi-symmetric functions, 
 then one may also establish the similar kind of expressions like \eqref{for:SSnuH} and \eqref{for:SSnuE} for them. 
\end{rmk}

 Using Theorem~\ref{thm:SSchurquasi},
 one automatically gets another relation from a given relation among quasi-symmetric functions by mapping it by the antipode $S$.
 For instance, from \eqref{for:dualCauchyqsym}, 
 we obtain the following equation.
 
\begin{cor}
 Assume that ${\pmb \gamma}=(\gamma_{ij})\in T^{\mathrm{diag}}((s^r),\mathbb{N})$ and
 ${\pmb \delta}=(\delta_{ij})\in T^{\mathrm{diag}}((r^s),\mathbb{N})$
 with $c_{k}=\gamma_{i,i+k}$ and $d_{k}=\delta_{i,i+k}$ for $k\in\mathbb{Z}$.
 Write $\eta=r+s$.
 Then, we have 
\begin{align*}
&\sum_{\lambda\subset (r^s)}(-1)^{|\lambda|}
S_{(r^s)/\lambda}\left(({\pmb \gamma}|_{\lambda^{\ast}})^{\#}\right)S_{(s^r)/\lambda^{\ast}}\left(({\pmb \delta}|_{\lambda})^{\#}\right)\\
&=
\det
\left[
\begin{array}{ccccccc}
 1 & -M_{(c_{1-r})} & M_{(c_{2-r},c_{1-r})} & \cdots & (-1)^{r}M_{(c_{0},\ldots,c_{1-r})} & \cdots & (-1)^{\eta-1}M_{(c_{\eta-1-r},\ldots,c_{1-r})} \\
 0 & 1 & -M_{(c_{2-r})} & \cdots & (-1)^{r-1}M_{(c_{0},\ldots,c_{2-r})} & \cdots & (-1)^{\eta-2}M_{(c_{\eta-1-r},\ldots,c_{2-r})} \\
 \vdots & \ddots & \ddots & \ddots & \vdots & & \vdots \\
 0 & \cdots & 0 & 1 & -M_{(c_0)} & \cdots & (-1)^{\eta-r}M_{(c_{\eta-1-r},\ldots,c_{0})} \\
\hline 
 1 & -M_{(d_{1-s})} & M_{(d_{2-s},d_{1-s})} & \cdots & (-1)^{s}M_{(d_{0},\ldots,d_{1-s})} & \cdots & (-1)^{\eta-1}M_{(d_{\eta-1-s},\ldots,d_{1-s})} \\
 0 & 1 & -M_{(d_{2-s})} & \cdots & (-1)^{s-1}M_{(d_{0},\ldots,d_{2-s})} & \cdots & (-1)^{\eta-2}M_{(d_{\eta-1-s},\ldots,d_{2-s})} \\
 \vdots & \ddots & \ddots & \ddots & \vdots & & \vdots \\
 0 & \cdots & 0 & 1 & -M_{(d_0)} & \cdots & (-1)^{\eta-s}M_{(d_{\eta-1-s},\ldots,d_{0})} 
\end{array}
\right]
.
\end{align*}
 Here, ${\pmb \gamma}|_{\lambda^{\ast}}\in T^{\mathrm{diag}}(\lambda^{\ast},\mathbb{N})$ and ${\pmb \delta}|_{\lambda}\in T^{\mathrm{diag}}(\lambda,\mathbb{N})$
 are the shape restriction of ${\pmb \gamma}$ and ${\pmb \delta}$ to $\lambda^{\ast}$ and $\lambda$, respectively.
\end{cor}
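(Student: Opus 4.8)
The plan is to deduce this corollary from the dual Cauchy formula \eqref{for:dualCauchyqsym} by simply applying the antipode $S$ of $\mathrm{Qsym}$ to both of its sides. Since $\mathrm{Qsym}$ is commutative, $S$ is an algebra automorphism with $S^{2}=\mathrm{id}$; in particular it is linear and multiplicative, so it commutes with determinants taken entrywise. Hence the whole task is to identify the images of the two sides of \eqref{for:dualCauchyqsym} under $S$.

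For the left-hand side, I would first combine multiplicativity of $S$ with \eqref{for:SSnu} of Theorem~\ref{thm:SSchurquasi}: for a fixed $\lambda\subset(s^r)$, the original coefficient $(-1)^{|\lambda|}$ together with the signs produced by $S\bigl(S_{\lambda}({\pmb \gamma}|_{\lambda})\bigr)=(-1)^{|\lambda|}S_{\lambda^{\#}}\bigl(({\pmb \gamma}|_{\lambda})^{\#}\bigr)$ and $S\bigl(S_{\lambda^{\ast}}({\pmb \delta}|_{\lambda^{\ast}})\bigr)=(-1)^{|\lambda^{\ast}|}S_{(\lambda^{\ast})^{\#}}\bigl(({\pmb \delta}|_{\lambda^{\ast}})^{\#}\bigr)$ collapse to $(-1)^{|\lambda^{\ast}|}$. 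Next I would invoke the elementary fact that the anti-diagonal transpose of a partition $\alpha\subset(s^r)$, computed inside that rectangle, is the skew shape $(r^s)/\alpha^{\ast}$ (up to a translation, which does not affect $S_{\nu}$), so that $\lambda^{\#}=(r^s)/\lambda^{\ast}$ and $(\lambda^{\ast})^{\#}=(s^r)/(\lambda^{\ast})^{\ast}=(s^r)/\lambda$. Finally the substitution $\mu=\lambda^{\ast}$, which is a bijection between the partitions contained in $(s^r)$ and those contained in $(r^s)$ with $\lambda=\mu^{\ast}$ and $|\lambda^{\ast}|=|\mu|$, rewrites the image of the left-hand side of \eqref{for:dualCauchyqsym} as $\sum_{\mu\subset(r^s)}(-1)^{|\mu|}S_{(r^s)/\mu}\bigl(({\pmb \gamma}|_{\mu^{\ast}})^{\#}\bigr)S_{(s^r)/\mu^{\ast}}\bigl(({\pmb \delta}|_{\mu})^{\#}\bigr)$, which is exactly the left-hand side of the corollary after renaming $\mu$ as $\lambda$.

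For the right-hand side, I would use $S(\det[a_{ij}])=\det[S(a_{ij})]$ and evaluate $S$ on the entries of the matrix in \eqref{for:dualCauchyqsym}. One has $S(1)=1$, $S(0)=0$, and $S(E_{{\pmb c}})=(-1)^{\ell({\pmb c})}M_{\overline{{\pmb c}}}$ for every composition ${\pmb c}$; the last identity follows by applying $S$ to Theorem~\ref{thm:Hoffman}$(2)$ and using $S^{2}=\mathrm{id}$ together with $\overline{\overline{{\pmb c}}}={\pmb c}$. Therefore the entry $E_{(c_{i-r},\ldots,c_{j-1-r})}$ --- a composition of length $j-i$ --- in the upper block of the matrix of \eqref{for:dualCauchyqsym} is sent to $(-1)^{j-i}M_{(c_{j-1-r},\ldots,c_{i-r})}$, and the lower block transforms in the same manner with $d$ in place of $c$; reading off the resulting matrix reproduces verbatim the matrix displayed in the corollary. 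Equating the two images of \eqref{for:dualCauchyqsym} then gives the assertion.

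I expect the only genuinely delicate point to be the combinatorial bookkeeping in the second paragraph: one has to check that $\#$ is compatible with the complementation $\alpha\mapsto\alpha^{\ast}$ and with the restriction of the diagonal-constant tableaux ${\pmb \gamma}$ and ${\pmb \delta}$, and that all signs match up after the re-indexing $\lambda\mapsto\lambda^{\ast}$. Everything else is routine, and --- exactly as for \eqref{for:dualCauchyqsym} itself --- the argument takes place in the algebra of formal power series, so no extra assumptions on the variables are required.
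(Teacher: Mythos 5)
Your proposal is correct and is exactly the argument the paper intends: the corollary is obtained by applying the antipode $S$ to both sides of \eqref{for:dualCauchyqsym}, using \eqref{for:SSnu} together with $\lambda^{\#}=(r^s)/\lambda^{\ast}$ and the re-indexing $\lambda\mapsto\lambda^{\ast}$ on the left, and $S(E_{\pmb c})=(-1)^{\ell({\pmb c})}M_{\overline{\pmb c}}$ (from Theorem~\ref{thm:Hoffman}(2) and $S^{2}=\mathrm{id}$) entrywise in the determinant on the right. All the sign and indexing checks you flag as delicate do work out as you describe.
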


 We remark that mapping this equation by $\rho\phi^{-1}$ under suitable convergence assumptions,
 one obtains the corresponding relation among the Schur multiple zeta values.


\section{Integral representations of Schur multiple zeta values}
 
 We finally show that the Schur multiple zeta value (SMZV for short) has an iterated integral representation
 when it is of ribbon type. 
 
\subsection{Integral representations} 

 For nonnegative integers $p_1,q_1,\ldots,p_r,q_r$,
 we denote by
\[
 \mathrm{rib}(h_1,\ldots,h_r)
=\mathrm{rib}(p_1,q_1\,|\,\cdots\,|\,p_r,q_r)
\]
 the ribbon
 obtained by connecting hooks $h_1=(p_1,1^{q_1}),\ldots,h_r=(p_r,1^{q_r})$ from the top right to the bottom left.
 For example, 
\[
\ytableausetup{mathmode,boxsize=10pt,aligntableaux=center} 
 \mathrm{rib}(3,3)
\,
=
\,
\ydiagram{3,1,1,1}\,,
\quad
 \mathrm{rib}(0,3\,|\,4,0)
\,
=
\,
\ydiagram{3+1,3+1,3+1,4}\,,
\quad
 \mathrm{rib}(4,1\,|\,2,3)
\,
=
\,
\ydiagram{1+4,1+1,2,1,1,1}\,.
\]
 Notice that  
 $\mathrm{rib}(p,0)=(p)$,
 $\mathrm{rib}(0,q)=(1^q)$,
 $\mathrm{rib}(p,q)=(p,1^{q})$ is a hook and 
 $\mathrm{rib}(0,q\,|\,p,0)=(p^{q+1})/((p-1)^q)$ is an anti-hook. 
 To guarantee the uniqueness of such expressions,
 we choose the minimum $r$ in the above expression. 
 For example,
 one cannot write $(p)=\mathrm{rib}(p-1,0\,|\,1,0)$ or $(1^q)=\mathrm{rib}(0,q-1\,|\,1,0)$, and so on.
 We remark that $p_i\ne 0,q_i=0$ and $p_{i+1}\ne 0$ may occur for some $i$, however,
 $q_i\ne 0,p_{i+1}=0$ and $q_{i+1}\ne 0$ never does for any $i$.
 For a ribbon $\nu$, let
\[
 I_{\nu}
=\left\{{\pmb \gamma}=(\gamma_{ij})\in T(\nu,\mathbb{N})\,\left|\,\text{$\gamma_{ij}\ge 2$ for all $(i,j)\in C(\nu)$}\right.\right\},
\]
 which also appeared in the previous section.
 For ${\pmb \gamma}=(\gamma_{ij})\in I_{\nu}$, we put $|{\pmb \gamma}|=\sum_{(i,j)\in D(\nu)}\gamma_{ij}$.
 
 It is well known that $\zeta_{(1^q)}({\pmb \beta})$, that is,
 the multiple zeta value (MZV for short), 
 has the following iterated integral representation (see, e.g., \cite{Za});
\begin{equation}
\label{for:intMZV}
\ytableausetup{boxsize=normal,aligntableaux=center}
\begin{ytableau}
 \beta_1 \\
 \lower2pt\vdots \\
 \beta_{q}
\end{ytableau} 
\,
=
\,
\int_{\Delta({\pmb \beta})}
\prod^{q}_{l=1}
\left(\frac{dy_{\beta_1+\cdots+\beta_{l-1}+1}}{1-y_{\beta_1+\cdots+\beta_{l-1}+1}}
\prod^{\beta_1+\cdots+\beta_{l-1}+\beta_l}_{j=\beta_1+\cdots+\beta_{l-1}+2}\frac{dy_j}{y_j}\right),
\end{equation}
 where ${\pmb \beta}\in I_{(1^q)}$ is the Young tableau in the left hand side of \eqref{for:intMZV} 
 and 
\[
 \Delta({\pmb \beta})
=\left\{{\pmb y}=(y_1,\ldots,y_{|{\pmb \beta}|})\in[0,1]^{|{\pmb \beta}|}\,\left|\,Q_{\pmb \beta}({\pmb y})\right.\right\}
\]
 with the condition 
\[
 Q_{\pmb \beta}({\pmb y})\,:\,y_1<\cdots<y_{|{\pmb \beta}|}.
\]
 Notice that the empty sum and product should be taken to be $0$ and $1$, respectively.
 Moreover, it is shown in \cite{Yamamoto} that 
 $\zeta_{(p)}({\pmb \alpha})$, that is, the multiple zeta-star value (MZSV for short), 
 also has a similar integral expression as
\begin{equation}
\label{for:intMZSV}
\ytableausetup{boxsize=normal,aligntableaux=center}
\begin{ytableau}
 \alpha_1 & \cdots & \alpha_{p}
\end{ytableau} 
\,
=
\,
\int_{\Delta({\pmb \alpha})}
\prod^{p}_{k=1}
\left(\frac{dx_{\alpha_p+\cdots+\alpha_{p+2-k}+1}}{1-x_{\alpha_p+\cdots+\alpha_{p+2-k}+1}}
\prod^{\alpha_p+\cdots+\alpha_{p+2-k}+\alpha_{p+1-k}}_{j=\alpha_p+\cdots+\alpha_{p+2-k}+2}\frac{dx_j}{x_j}\right),
\end{equation}
 where ${\pmb \alpha}\in I_{(p)}$ is the Young tableau in the left hand side of \eqref{for:intMZSV} and  
\[
 \Delta({\pmb \alpha})
=\left\{{\pmb x}=(x_1,\ldots,x_{|{\pmb \alpha}|})\in[0,1]^{|{\pmb \alpha}|}\,\left|\,P_{\pmb \alpha}({\pmb x})\right.\right\} 
\]
 with the condition 
\[
 P_{\pmb \alpha}({\pmb x})\,:\,
\left\{
\begin{array}{l}
 \text{$x_j<x_{j+1}$ if $j\notin\{\alpha_p,\alpha_p+\alpha_{p-1},\ldots,\alpha_p+\cdots+\alpha_2\}$}, \\[3pt]
 \text{$x_j>x_{j+1}$ if $j\in\{\alpha_p,\alpha_p+\alpha_{p-1},\ldots,\alpha_p+\cdots+\alpha_2\}$}.
\end{array}
\right.
\] 
 Furthermore,
 in \cite[Theorem~4.1]{KanekoYamamoto},
 the following integral expression for the SMZV of anti-hook type,
 which is denoted by $\zeta(\mu({\pmb k},{\pmb l}))$ in \cite{KanekoYamamoto}, is obtained;
\begin{align}
\label{for:intSMZantihook}
\ytableausetup{boxsize=normal,aligntableaux=center}
\begin{ytableau}
 \none & \none & \beta_1 \\
 \none & \none & \lower2pt\vdots \\
 \none & \none & \beta_q \\ 
 \alpha_1 & \cdots & \alpha_{p}
\end{ytableau} 
\,
&=
\,
\int_{\Delta({\pmb \delta})}
\prod^{q}_{l=1}
\left(\frac{dy_{\beta_1+\cdots+\beta_{l-1}+1}}{1-y_{\beta_1+\cdots+\beta_{l-1}+1}}
\prod^{\beta_1+\cdots+\beta_{l-1}+\beta_l}_{j=\beta_1+\cdots+\beta_{l-1}+2}\frac{dy_j}{y_j}\right)\\
\nonumber
&\ \ \ \times
\prod^{p}_{k=1}
\left(\frac{dx_{\alpha_p+\cdots+\alpha_{p+2-k}+1}}{1-x_{\alpha_p+\cdots+\alpha_{p+2-k}+1}}
\prod^{\alpha_p+\cdots+\alpha_{p+2-k}+\alpha_{p+1-k}}_{j=\alpha_p+\cdots+\alpha_{p+2-k}+2}\frac{dx_j}{x_j}\right),
\end{align}
 where ${\pmb \delta}={\pmb \delta}({\pmb \beta},{\pmb \alpha})\in I_{\mathrm{rib}(0,q\,|\,p,0)}$ is the Young tableau in the left hand side of \eqref{for:intSMZantihook},
 ${\pmb \alpha},{\pmb \beta}$ are the previous tableaux with a relaxed condition $\beta_q\ge 1$ if $p\ne 0$ and  
\[
 \Delta({\pmb \delta})
=\left\{({\pmb y},{\pmb x})=(y_1,\ldots,y_{|{\pmb \beta}|},x_1,\ldots,x_{|{\pmb \alpha}|})\in[0,1]^{|{\pmb \beta}|+|{\pmb \alpha}|}\,\left|\,
\text{$Q_{\pmb \beta}({\pmb y})$, $P_{\pmb \alpha}({\pmb x})$ and $y_{|{\pmb \beta}|}<x_1$}\right.\right\}. 
\]
 By the same idea,
 one obtains the formula for that of hook type;
\begin{align}
\label{for:intSMZhook}
\ytableausetup{boxsize=normal,aligntableaux=center}
\begin{ytableau}
 \alpha_1 & \cdots & \alpha_{p} \\
 \beta_1 \\
 \lower2pt\vdots \\ 
 \beta_q
\end{ytableau} 
\,
&=
\,
\int_{\Delta({\pmb \gamma})}
\prod^{p}_{k=1}
\left(\frac{dx_{\alpha_p+\cdots+\alpha_{p+2-k}+1}}{1-x_{\alpha_p+\cdots+\alpha_{p+2-k}+1}}
\prod^{\alpha_p+\cdots+\alpha_{p+2-k}+\alpha_{p+1-k}}_{j=\alpha_p+\cdots+\alpha_{p+2-k}+2}\frac{dx_j}{x_j}\right)\\
\nonumber
&\ \ \ \times
\prod^{q}_{l=1}
\left(\frac{dy_{\beta_1+\cdots+\beta_{l-1}+1}}{1-y_{\beta_1+\cdots+\beta_{l-1}+1}}
\prod^{\beta_1+\cdots+\beta_{l-1}+\beta_l}_{j=\beta_1+\cdots+\beta_{l-1}+2}\frac{dy_j}{y_j}\right),
\end{align}
 where ${\pmb \gamma}={\pmb \gamma}({\pmb \alpha},{\pmb \beta})\in I_{\mathrm{rib}(p,q)}$ is the Young tableau in the left hand side of \eqref{for:intSMZhook}
 and
\[
 \Delta({\pmb \gamma})
=\left\{({\pmb x},{\pmb y})=(x_1,\ldots,x_{|{\pmb \alpha}|},y_1,\ldots,y_{|{\pmb \beta}|})\in[0,1]^{|{\pmb \alpha}|+|{\pmb \beta}|}\,\left|\,
\text{$P_{\pmb \alpha}({\pmb x})$, $Q_{\pmb \beta}({\pmb y})$ and $x_{|{\pmb \alpha}|}<y_1$}\right.\right\}. 
\]

\begin{exam} 
 We have
\begin{align}
\ytableausetup{boxsize=normal,aligntableaux=center}
\begin{ytableau}
 2 \\
 1 \\
 3
\end{ytableau} 
\,
&=
\,
\int_{y_1<y_2<y_3<y_4<y_5<y_6}
\frac{dy_1}{1-y_1}\frac{dy_2}{y_2}\frac{dy_3}{1-y_3}\frac{dy_4}{1-y_4}\frac{dy_5}{y_5}\frac{dy_6}{y_6}, \nonumber\\
\begin{ytableau}
 3 & 1 & 2 
\end{ytableau} 
\,
&=
\,
\int_{x_1<x_2>x_3>x_4<x_5<x_6}
\frac{dx_1}{1-x_1}\frac{dx_2}{x_2}\frac{dx_3}{1-x_3}\frac{dx_4}{1-x_4}\frac{dx_5}{x_5}\frac{dx_6}{x_6}, \nonumber\\
\label{for:intA}
\begin{ytableau}
 \none & 3 \\
 \none & 1 \\
 2 & 2
\end{ytableau}
\,
&=
\,
\int_{y_1<y_2<y_3<y_4<x_1<x_2>x_3<x_4}
\frac{dy_1}{1-y_1}\frac{dy_2}{y_2}\frac{dy_3}{y_3}\frac{dy_4}{1-y_4}\frac{dx_1}{1-x_1}\frac{dx_2}{x_2}\frac{dx_3}{1-x_3}\frac{dx_4}{x_4},\\
\label{for:intB}
\begin{ytableau}
 3 & 2 \\
 1 \\
 2
\end{ytableau}
\,
&=
\,
\int_{x_1<x_2>x_3<x_4<x_5<y_1<y_2<y_3}
\frac{dx_1}{1-x_1}\frac{dx_2}{x_2}\frac{dx_3}{1-x_3}\frac{dx_4}{x_4}\frac{dx_5}{x_5}\frac{dy_1}{1-y_1}\frac{dy_2}{1-y_2}\frac{dy_3}{y_3}.
\end{align} 
 Notice that we omit the condition $0<x_i,y_i<1$ from the notations.
\end{exam}
 
 Now, one can easily generalize these results to SMZVs of ribbon type as follows.

\begin{thm}
\label{thm:intrep}
 Let $p_1,q_1,\ldots,p_r,q_r$ be nonnegative integers.
 For $1\le i\le r$, let $h_i=(p_i,1^{q_i})$ be a hook and
\[
\ytableausetup{boxsize=20pt,aligntableaux=center}
 {\pmb \alpha}_i
=\,
\begin{ytableau}
 \alpha^{(i)}_1 & \cdots & \alpha^{(i)}_{p_i}
\end{ytableau} 
\in I_{(p_i)},
\quad
 {\pmb \beta}_i
=\,
\begin{ytableau}
 \beta^{(i)}_1 \\
 \lower2pt\vdots \\ 
 \beta^{(i)}_{q_i}
\end{ytableau} 
\in I_{(1^{q_i})},
\quad
 {\pmb \gamma}_i
={\pmb \gamma}({\pmb \alpha}_i,{\pmb \beta_i})
=
\,
\begin{ytableau}
 \alpha^{(i)}_1 & \cdots & \alpha^{(i)}_{p_i} \\
 \beta^{(i)}_1 \\
 \lower2pt\vdots \\ 
 \beta^{(i)}_{q_i}
\end{ytableau} 
\in I_{h_i},
\] 
 with relaxed conditions $\beta^{(i)}_{q_i}\ge 1$ for $1\le i\le r-1$.
 Define ${\pmb \gamma}={\pmb \gamma}_1\sqcup\cdots\sqcup {\pmb \gamma}_r\in I_{\mathrm{rib}(h_1,\ldots,h_r)}$
 by connecting ${\pmb \gamma}_1,\ldots,{\pmb \gamma}_r$ from the top right to the bottom left.
 Then, it holds that 
\begin{align*}
 \zeta_{\mathrm{rib}(h_1,\ldots,h_r)}({\pmb \gamma})
&=
\int_{\Delta({\pmb \gamma})}
\prod^{r}_{i=1}
\left[
\prod^{p_i}_{k=1}
\left(\frac{dx^{(i)}_{\alpha^{(i)}_{p_i}+\cdots+\alpha^{(i)}_{{p_i}+2-k}+1}}{1-x^{(i)}_{\alpha^{(i)}_{p_i}+\cdots+\alpha^{(i)}_{{p_i}+2-k}+1}}
\prod^{\alpha^{(i)}_{p_i}+\cdots+\alpha^{(i)}_{{p_i}+2-k}+\alpha^{(i)}_{{p_i}+1-k}}_{j=\alpha^{(i)}_{p_i}+\cdots+\alpha^{(i)}_{{p_i}+2-k}+2}\frac{dx^{(i)}_j}{x^{(i)}_j}\right)
\right.
\\
\nonumber
&\ \ \ \left.
\times
\prod^{q_i}_{l=1}
\left(\frac{dy^{(i)}_{\beta^{(i)}_1+\cdots+\beta^{(i)}_{l-1}+1}}{1-y^{(i)}_{\beta^{(i)}_1+\cdots+\beta^{(i)}_{l-1}+1}}
\prod^{\beta^{(i)}_1+\cdots+\beta^{(i)}_{l-1}+\beta^{(i)}_l}_{j=\beta^{(i)}_1+\cdots+\beta^{(i)}_{l-1}+2}\frac{dy^{(i)}_j}{y^{(i)}_j}\right)
\right],
\end{align*} 
 where
\begin{align*}
 \Delta({\pmb \gamma})
=\left\{({\pmb x}_1,{\pmb y}_1,\ldots,{\pmb x}_r,{\pmb y}_r)\in[0,1]^{\sum^{r}_{i=1}(|{\pmb \alpha}_i|+|{\pmb \beta}_i|)}\,\left|\,
\begin{array}{l}
\text{$P_{{\pmb \alpha}_i}({\pmb x}_i)$, $Q_{{\pmb \beta}_i}({\pmb y}_i)$ \ $(1\le i\le r)$}, \\[5pt]
\text{$x^{(i)}_{|{\pmb \alpha}_i|}<
\begin{cases}
y^{(i)}_1 & q_i\ne 0 \\[3pt]
x^{(i+1)}_1 & q_i=0 \\
\end{cases}
$ \ $(1\le i\le r)$}, \\[15pt]
\text{$y^{(i)}_{|{\pmb \beta}_i|}<x^{(i+1)}_1$ \ $(1\le i\le r-1)$} 
\end{array}
\right.\right\}
.
\end{align*}
 Here, we write ${\pmb x}_i=(x^{(i)}_1,\ldots,x^{(i)}_{|{\pmb \alpha}_i|})$ and ${\pmb y}_i=(y^{(i)}_1,\ldots,y^{(i)}_{|{\pmb \beta}_i|})$ for $1\le i\le r$.
\end{thm}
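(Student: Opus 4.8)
The plan is to extend the argument of Kaneko--Yamamoto \cite{KanekoYamamoto} and Yamamoto \cite{Yamamoto} that produces the hook and anti-hook formulas \eqref{for:intSMZhook} and \eqref{for:intSMZantihook} to an arbitrary ribbon, by induction on the number $r$ of hooks; the base case $r=1$ is exactly \eqref{for:intSMZhook}.

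First I would record the elementary building block underlying all of these representations: for an integer $k\ge 1$ and a parameter $z\in(0,1]$,
\[
 \int_{0<t_1<\cdots<t_k<z}\frac{dt_1}{1-t_1}\prod_{j=2}^{k}\frac{dt_j}{t_j}
 =\sum_{n\ge 1}\frac{z^n}{n^k},
\]
together with its variant in which replacing a constraint $t_j<t_{j+1}$ by $t_j>t_{j+1}$ at a prescribed position produces, on the series side, the geometric shift responsible for a weak inequality. Next I would rewrite the definition of $\zeta_{\mathrm{rib}(h_1,\ldots,h_r)}({\pmb \gamma})$ as a nested sum: reading the entries of $M\in\mathrm{SSYT}(\mathrm{rib}(h_1,\ldots,h_r))$ along the ribbon path hook by hook---inside ${\pmb \gamma}_i$, up the column of ${\pmb \beta}_i$ and then along the row of ${\pmb \alpha}_i$---one obtains a zigzag chain in which each vertical step of the path forces a strict inequality and each horizontal step a weak one, the box exponents being the matching entries of ${\pmb \gamma}$. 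The hypothesis ${\pmb \gamma}\in I_{\mathrm{rib}(h_1,\ldots,h_r)}$ guarantees absolute convergence by Lemma~\ref{lem:convergence}.

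For the inductive step I would peel off the first hook: regard $\mathrm{rib}(h_1,\ldots,h_r)$ as $h_1$ attached, from the top right, to $\mathrm{rib}(h_2,\ldots,h_r)$. In the nested sum the entries belonging to ${\pmb \gamma}_1$ are coupled to the remaining entries only through one junction inequality, relating the bottom left box of ${\pmb \gamma}_1$ (namely $\beta^{(1)}_{q_1}$, or $\alpha^{(1)}_{p_1}$ when $q_1=0$) to the top right box of ${\pmb \gamma}_2$. Summing first over the ${\pmb \gamma}_1$-entries with the junction entry held fixed gives a truncated hook SMZV, which by the $r=1$ case and the building-block identity above equals the hook integral of \eqref{for:intSMZhook} with its outer bound replaced by the junction variable $x^{(2)}_1$. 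Substituting this back and applying the induction hypothesis to $\mathrm{rib}(h_2,\ldots,h_r)$ reassembles precisely the integral in the statement, the constraints $y^{(i)}_{|{\pmb \beta}_i|}<x^{(i+1)}_1$ (when $q_i\ne 0$) and $x^{(i)}_{|{\pmb \alpha}_i|}<x^{(i+1)}_1$ (when $q_i=0$) being exactly what the zigzag dictionary of the previous paragraph prescribes at the junctions. Here the relaxed hypotheses $\beta^{(i)}_{q_i}\ge 1$ for $1\le i\le r-1$ are precisely what makes every junction box a non-corner box, so that all intermediate sums and integrals converge, the sole genuine convergence constraint coming from the unique corner at the end of $h_r$.

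The step I expect to be the main obstacle is purely combinatorial bookkeeping: orienting the zigzag of strict versus weak inequalities so that it matches the prescribed order of the one-forms $\frac{dt}{1-t}$ and $\frac{dt}{t}$ in the statement, and handling cleanly the case distinctions $q_i=0$ versus $q_i\ne 0$ (which box of ${\pmb \gamma}_i$ abuts ${\pmb \gamma}_{i+1}$) and the mirror distinction $p_i=0$ versus $p_i\ne 0$; the Fubini and change-of-variables manipulations themselves are routine once absolute convergence (Lemma~\ref{lem:convergence}) and convergence of the polylogarithm integrals on $(0,1)$ are in hand. An alternative that bypasses the induction is to invoke Yamamoto's calculus of iterated integrals attached to $2$-posets: $\zeta_{\mathrm{rib}(h_1,\ldots,h_r)}({\pmb \gamma})$ is the integral associated with the zigzag $2$-poset whose Hasse diagram is the ribbon path with its vertical and horizontal edges labelled appropriately, and the displayed formula is that integral written out in coordinates.
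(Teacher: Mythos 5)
Your proposal is correct and follows essentially the same route as the paper, whose own proof consists of the remark that the identity is direct together with one worked example: both arguments evaluate the iterated integral variable by variable using the elementary polylogarithm integrals you record, matching the resulting zigzag of weak and strict inequalities with the row and column adjacencies along the ribbon. If you do organize this as an induction on $r$, note that (i) the inductive hypothesis must be strengthened to the truncated statement carrying the monomial $z^{c}$ in the junction variable $z=x^{(2)}_1$, since the plain theorem for $r-1$ hooks does not apply to the weighted integral remaining after ${\pmb x}_1,{\pmb y}_1$ are integrated out; (ii) when $q_1=0$ the junction box of ${\pmb \gamma}_1$ is $\alpha^{(1)}_1$, the leftmost box of its arm, not $\alpha^{(1)}_{p_1}$; and (iii) a ribbon has a corner at the right end of every arm, so the convergence requirements are not confined to the single corner ending $h_r$ (which is exactly why each ${\pmb \alpha}_i\in I_{(p_i)}$ is assumed).
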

\begin{proof}
 This is direct. 
 One can understand the general case by the following example: 
\begin{equation}
\label{for:intexample}
\ytableausetup{boxsize=normal,aligntableaux=center}
\begin{ytableau}
 \none & 1 & 2 \\
 \none & 1 \\
 2 & 2 
\end{ytableau}
\,
=
\,
\int_{x_1<x_2>x_3<y_1<z_1<z_2>z_3<z_4}
\frac{dx_1}{1-x_1}\frac{dx_2}{x_2}\frac{dx_3}{1-x_3}\frac{dy_1}{1-y_1}
\frac{dz_1}{1-z_1}\frac{dz_2}{z_2}\frac{dz_3}{1-z_3}\frac{dz_4}{z_4},
\end{equation}
 which is the case of $h_1=(2,1)$ and $h_2=(2)$.
 Actually, we have 
\begin{align*}
\ytableausetup{boxsize=normal,aligntableaux=center}
 (\text{RHS of \eqref{for:intexample}}) 
&=\sum^{\infty}_{l=1}\frac{1}{l}\int_{x_2>x_3<y_1<z_1<z_2>z_3<z_4}
x^{l-1}_2dx_2\frac{dx_3}{1-x_3}\frac{dy_1}{1-y_1}
\frac{dz_1}{1-z_1}\frac{dz_2}{z_2}\frac{dz_3}{1-z_3}\frac{dz_4}{z_4}\\
&=\sum^{\infty}_{l=1}\frac{1}{l^2}\int_{x_3<y_1<z_1<z_2>z_3<z_4}
\frac{1-x_3^{l}}{1-x_3}dx_3\frac{dy_1}{1-y_1}
\frac{dz_1}{1-z_1}\frac{dz_2}{z_2}\frac{dz_3}{1-z_3}\frac{dz_4}{z_4}\\
&=\sum^{\infty}_{l=1}\sum^{l}_{k=1}\frac{1}{l^2}\frac{1}{k}\int_{y_1<z_1<z_2>z_3<z_4}
\frac{y_1^k}{1-y_1}dy_1
\frac{dz_1}{1-z_1}\frac{dz_2}{z_2}\frac{dz_3}{1-z_3}\frac{dz_4}{z_4}\\
&=\sum^{\infty}_{l=1}\sum^{l}_{k=1}\sum^{\infty}_{m=1}\frac{1}{l^2}\frac{1}{k}\frac{1}{k+m}\int_{z_1<z_2>z_3<z_4}
\frac{z_1^{k+m}}{1-z_1}dz_1\frac{dz_2}{z_2}\frac{dz_3}{1-z_3}\frac{dz_4}{z_4}\\
&=\sum^{\infty}_{l=1}\sum^{l}_{k=1}\sum^{\infty}_{m=1}\sum^{\infty}_{n=1}\frac{1}{l^2}\frac{1}{k}\frac{1}{k+m}\frac{1}{k+m+n}\int_{z_2>z_3<z_4}
z_2^{k+m+n-1}dz_2\frac{dz_3}{1-z_3}\frac{dz_4}{z_4}\\
&=\sum^{\infty}_{l=1}\sum^{l}_{k=1}\sum^{\infty}_{m=1}\sum^{\infty}_{n=1}
\frac{1}{l^2}\frac{1}{k}\frac{1}{k+m}\frac{1}{(k+m+n)^2}\int_{z_3<z_4}
\frac{1-z^{k+m+n}_3}{1-z_3}dz_3\frac{dz_4}{z_4}\\
&=\sum^{\infty}_{l=1}\sum^{l}_{k=1}\sum^{\infty}_{m=1}\sum^{\infty}_{n=1}\sum^{k+m+n}_{j=1}
\frac{1}{l^2}\frac{1}{k}\frac{1}{k+m}\frac{1}{(k+m+n)^2}\frac{1}{j}\int^{1}_{0}
z^{j-1}_4dz_4\\
&=\sum^{\infty}_{l=1}\sum^{l}_{k=1}\sum^{\infty}_{m=1}\sum^{\infty}_{n=1}\sum^{k+m+n}_{j=1}
\frac{1}{l^2}\frac{1}{k}\frac{1}{k+m}\frac{1}{(k+m+n)^2}\frac{1}{j^2} \displaybreak[3] \\
&=\sum_{a\le b,\,a<c<e,\,d\le e}
\frac{1}{b^2}\frac{1}{a}\frac{1}{c}\frac{1}{e^2}\frac{1}{d^2}\\[3pt]
&=(\text{LHS of \eqref{for:intexample}}).
\end{align*}
\end{proof}

\begin{rmk}
 It seems to be difficult to express a general (that is, non-ribbon type) SMZVs as a single iterated integral as above. 
 Notice that, since we have the expressions \eqref{for:SchurtoMZVskew},
 every SMZV can be written as a {\it sum} of such integrals. 
\end{rmk}

\subsection{A duality}

 From \eqref{for:intexample}, we have 
\begin{align*}
\ytableausetup{boxsize=normal,aligntableaux=center}
\begin{ytableau}
 \none & 1 & 2 \\
 \none & 1 \\
 2 & 2 
\end{ytableau}
\,
&=
\,
\int_{t_1<t_2>t_3<t_4<t_5<t_6>t_7<t_8}
\frac{dt_1}{1-t_1}\frac{dt_2}{t_2}\frac{dt_3}{1-t_3}\frac{dt_4}{1-t_4}
\frac{dt_5}{1-t_5}\frac{dt_6}{t_6}\frac{dt_7}{1-t_7}\frac{dt_8}{t_8}\\
&=
\int_{t'_1<t'_2<t'_3>t'_4<t'_5<t'_6>t'_7<t'_8}
\frac{dt'_1}{1-t'_1}\frac{dt'_2}{t'_2}\frac{dt'_3}{1-t'_3}\frac{dt'_4}{t'_4}
\frac{dt'_5}{t'_5}\frac{dt'_6}{t'_6}\frac{dt'_7}{1-t'_7}\frac{dt'_8}{t'_8}\nonumber\\[3pt]
&=
\,
\begin{ytableau}
 2 & 4 & 2  
\end{ytableau}
\,
.
\nonumber
\end{align*} 
 Here, in the second equality, we have made a change of variables $t'_i=1-t_{9-i}$ for $1\le i\le 8$.
 Such a kind of relation is called a {\it duality}.
 The duality for MZVs is well-known (see \cite{Za}).
 On the other hand, the duality for MZSVs has not been obtained yet (see \cite{KanekoOhno2010,Li2012} for another kind of duality for MZSVs).
 Theorem~\ref{thm:intrep} immediately implies that there exists a duality for SMZVs of ribbon type, however,
 one should remark that in general the dual (in the above sense) of a SMZV of ribbon type is not of ribbon type again.  
 In fact, one sees that, if ${\pmb \gamma}=(\gamma_{ij})\in I_{\mathrm{rib}(h_1,\ldots,h_r)}$ has an entry $\gamma_{ij}=1$ 
 where $(i,j)\in D(\mathrm{rib}(h_1,\ldots,h_r))$ is a horizontal and non-vertical entry
 or the entry where the ribbon ends,  
 then the dual of $\zeta_{\mathrm{rib}(h_1,\ldots,h_r)}({\pmb \gamma})$ is not of ribbon type.

\begin{exam}
 One easily sees the following dualities   
\[
\ytableausetup{boxsize=normal,aligntableaux=center}
\begin{ytableau}
 \none & \none & 2 \\
 \none & 1 & 2 \\
 2 & 2  
\end{ytableau}
\,
=
\,
\begin{ytableau}
 2 & 3 & 2 \\
 2  
\end{ytableau}
\,,
\quad
\begin{ytableau}
 \none & \none & \none & 3 & 2 \\
 \none & \none & \none & 1 \\
 \none & 1 & 2 & 2 \\
 1 & 2 \\
 2  
\end{ytableau}
\,
=
\,
\begin{ytableau}
 \none & 3 & 2 & 3 & 3 \\
 \none & 1 \\
 2 & 2  
\end{ytableau}
\] 
 On the other hand, we have
\begin{align*}
\begin{ytableau}
 \none & \none & 1 \\
 2 & 1 & 2 
\end{ytableau}
\,
&=\int_{t_1<t_2<t_3>t_4>t_5<t_6}
\frac{dt_1}{1-t_1}\frac{dt_2}{1-t_2}\frac{dt_3}{t_3}\frac{dt_4}{1-t_4}\frac{dt_5}{1-t_5}\frac{dt_6}{t_6}\\
&=\int_{t'_1<t'_2>t'_3>t'_4<t'_5<t'_6}
\frac{dt'_1}{1-t'_1}\frac{dt'_2}{t'_2}\frac{dt'_3}{t'_3}\frac{dt'_4}{1-t'_4}\frac{dt'_5}{t'_5}\frac{dt'_6}{t'_6}
\end{align*}
 and see that the rightmost hand side above can not be realized as a single SMZV of ribbon type
 because of the $(2,2)$ entry $1$ of the Young tableau in the left hand side. 
 Notice that there are "self-dual" SMZVs.
 For example, the dual of  
 \ytableausetup{mathmode,boxsize=10pt,aligntableaux=center} 
\!\!\!\!\!
$\begin{ytableau}
 \none & 2 & 2 \\
 2 & 2 
\end{ytableau}
$\,
is itself.
\end{exam}

 When MZSV has no $1$ entries, its dual can be explicitly written as follows.

\begin{cor}
 Let $\alpha_1,\ldots,\alpha_p\ge 2$ be positive integers.
 Then, it holds that 
\[
\ytableausetup{boxsize=normal,aligntableaux=center}
\begin{ytableau}
 \alpha_1 & \cdots & \alpha_{p}
\end{ytableau} 
\,
=
\,
\begin{ytableau}
 \none & \none & \none & \none & \none & 1 \\
 \none & \none & \none & \none & \none & \lower2pt\vdots  \\
 \none & \none & \none & \none & \none & 1 \\
 \none & \none & \none & \none & 1 & 2 \\
 \none & \none & \none & \none & \lower2pt\vdots  \\
 \none & \none & \none & \none & 1 \\
 \none & \none & \none & 1 & 2 \\
 \none & \none & \none & 1 \\
 \none & \none & \none[\reflectbox{$\ddots$}] \\
 \none & 1 \\ 
 1 & 2 \\ 
 \lower2pt\vdots  \\
 1 \\
 2 
\end{ytableau} 
,
\]
 where the ribbon in the right hand side has $p$ columns and $\alpha_{p+1-j}-1$ boxes in the $j$th column for $1\le j\le p$.
\end{cor}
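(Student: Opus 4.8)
The plan is to imitate the computations in the examples just before the statement: realize the left-hand side as a single iterated integral, apply the duality substitution $t\mapsto 1-t$, and identify the outcome with the iterated integral attached (via Theorem~\ref{thm:intrep}) to the ribbon Schur multiple zeta value on the right. Since $\zeta^{\star}(\alpha_1,\ldots,\alpha_p)=\zeta_{(p)}({\pmb\alpha})$ and $(p)=\mathrm{rib}(p,0)$ with ${\pmb\alpha}\in I_{(p)}$ (here the hypothesis $\alpha_p\ge 2$ is used), the integral representation \eqref{for:intMZSV} applies, and the whole proof reduces to a change of variables in that integral.

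\textbf{Key steps.} First I would fix $n=\alpha_1+\cdots+\alpha_p$, write out $\zeta^{\star}(\alpha_1,\ldots,\alpha_p)$ via \eqref{for:intMZSV} as an integral over $\Delta({\pmb\alpha})$ (domain cut out by $P_{\pmb\alpha}$), whose integrand is the product, over the $p$ blocks read from the rightmost entry $\alpha_p$ to the leftmost $\alpha_1$, of $\frac{dt}{1-t}$ followed by $\alpha_{p+1-k}-1$ factors $\frac{dt}{t}$. Second I would substitute $t'_i=1-t_{n+1-i}$ for $1\le i\le n$: this reverses the order of the $n$ variables, replaces every order relation by its opposite, and interchanges $\frac{dt}{1-t}\leftrightarrow\frac{dt}{t}$, the Jacobian sign cancelling the reversal of the orientation of the domain as usual. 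Tracking the effect, the $p$ blocks get permuted into the order $\alpha_1,\alpha_2,\ldots,\alpha_p$, the $k$-th block becoming $\bigl(\frac{dt}{1-t}\bigr)^{\alpha_k-1}\frac{dt}{t}$, each block now an increasing run and the $p-1$ old descents remaining as the only descents, now situated between consecutive blocks. Third I would read off which integral this is. Interpreting, exactly as in the proof of Theorem~\ref{thm:intrep}, each factor $\frac{dt}{1-t}$ as opening a new box and each $\frac{dt}{t}$ as extending the current one, the $k$-th block is a vertical strip of $\alpha_k-1$ boxes with entries $1,\ldots,1,2$ from top to bottom; the internal ascents stack the boxes of a strip in one column, and a descent between two consecutive blocks puts the bottom box of one strip and the top box of the next strip in a common row, the latter one step to the left. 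This is precisely the ribbon $\nu$ drawn in the statement, together with the asserted tableau ${\pmb\gamma}$, and the integral obtained is exactly the one produced by Theorem~\ref{thm:intrep} for $\zeta_{\nu}({\pmb\gamma})$; in hook language $\nu=\mathrm{rib}(h_1,\ldots,h_p)$ with $h_1$ the pure column made of the top boxes of the rightmost strip and, for $2\le k\le p$, $h_k$ the two-box step-row joining the $(k-1)$-st and $k$-th strips together with the remaining interior boxes of the $k$-th strip (with the evident degenerations when some $\alpha_k\in\{2,3\}$), so that Theorem~\ref{thm:intrep} does apply to $\nu$. Equating the two integrals gives the claimed identity.

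\textbf{Main obstacle.} The delicate point is the third step: checking that after the substitution the domain and the product of forms coincide term by term with the $\Delta({\pmb\gamma})$ and integrand that Theorem~\ref{thm:intrep} attaches to this particular $\nu$ and ${\pmb\gamma}$, and in particular pinning down the correct hook decomposition of $\nu$ — the hooks $h_i$ do \emph{not} coincide with the columns of $\nu$, since the one-box top of the first column and the two-box steps between successive columns have to be apportioned among the $h_i$ in the way indicated, and the degenerate cases $\alpha_k=2,3$ need separate bookkeeping. It is also here that the full hypothesis $\alpha_i\ge 2$ for all $i$ (not merely $\alpha_p\ge 2$) is needed: an $\alpha_i=1$ with $i<p$ would be a horizontal non-terminal entry equal to $1$, and, as remarked before the statement, its dual fails to be of ribbon type.
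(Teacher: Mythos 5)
Your proposal is correct and follows essentially the same route the paper intends: the corollary is a direct instance of the duality mechanism set up via Theorem~\ref{thm:intrep}, namely writing $\zeta^{\star}(\alpha_1,\ldots,\alpha_p)$ by \eqref{for:intMZSV}, applying $t'_i=1-t_{n+1-i}$ (which reverses the word, preserves the ascent/descent pattern in mirrored position, and swaps $\frac{dt}{1-t}\leftrightarrow\frac{dt}{t}$), and reading off the resulting word $\bigl(\frac{dt}{1-t}\bigr)^{\alpha_1-1}\frac{dt}{t}\,\bigl|\,\cdots\,\bigl|\,\bigl(\frac{dt}{1-t}\bigr)^{\alpha_p-1}\frac{dt}{t}$ as the ribbon tableau in the statement. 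Your worry about pinning down the exact hook decomposition $\mathrm{rib}(h_1,\ldots,h_r)$ is more bookkeeping than is strictly needed — once the shape is seen to be a connected skew diagram with no $2\times2$ block, the identification of ascents with vertical adjacency and descents with horizontal adjacency already matches the integral of Theorem~\ref{thm:intrep} box by box — but it does no harm.
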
 

\begin{rmk}
 If one obtains a duality for the SMZVs, then, from \eqref{for:SchurtoMZVskew},
 one may be able to get a linear relation for MZVs and MZSVs. 
 For example, we can check the duality 
\[
\ytableausetup{boxsize=normal,aligntableaux=center}
\begin{ytableau}
 3 & 2
\end{ytableau}
\,
=
\,
\begin{ytableau}
 \none & 1 \\
 2 & 2
\end{ytableau}
\]
 and, from \eqref{for:SchurtoMZVskew}, obtain the relation 
\[
 \zeta(3,2)+\zeta(5)
=\zeta(1,4)+\zeta(1,2,2)+\zeta(3,2)+\zeta(2,1,2).
\] 
 On the other hand, from \eqref{for:intA} and \eqref{for:intB}, we have 
\[
\ytableausetup{boxsize=normal,aligntableaux=center}
\begin{ytableau}
 \none & 3 \\
 \none & 1 \\
 2 & 2
\end{ytableau}
\,
=
\,
\begin{ytableau} 
 3 & 2 \\
 1 \\
 2
\end{ytableau}
\,,
\]
 however, \eqref{for:SchurtoMZVskew} does not yield any relations.
\end{rmk}

\begin{rmk}
 It is proved in \cite[(8) or, more generally, Theorem~A]{Chen2017} that 
\begin{equation}
\label{for:Chen}
\ytableausetup{boxsize=normal,aligntableaux=center}
\begin{ytableau}
 \none & \none & \none & 1 \\
 \none & \none & \none & \lower2pt\vdots \\
 \none & \none & \none & 1 \\
 1 & \cdots & 1 & 2
\end{ytableau}
\,
=\binom{p+q}{p}\zeta(p+q+1),
\end{equation}
 where the shape of the anti-hook in the lefthand side of \eqref{for:Chen} is $((q+1)^p)/(q^{p-1})$,
 that is, $p-1$ and $q$ are the numbers of $1$'s in the horizontal and vertical entries of the anti-hook, respectively. 
 We notice that in \cite{Chen2017}
 the lefthand side of \eqref{for:Chen} is expressed as 
\[
 \sum_{1\le n_1<\cdots<n_p}
\frac{P_q(H^{(1)}_{n_p},\ldots,H^{(q)}_{n_p})}{n_1\cdots n_{p-1}n_p^2},
\]
 where $P_q(x_1,\ldots,x_q)$ is the modified Bell polynomial
 and $H^{(k)}_{n}=\sum^{n}_{m=1}\frac{1}{m^k}$
 is the generalized harmonic number.
 One can also prove \eqref{for:Chen} via the integral representations
 and their duality in the above sense.
 Actually, from \eqref{for:intSMZantihook}, we have 
\begin{align*}
 (\text{LHS of \eqref{for:Chen}}) 
&=\int_{y_1<\cdots<y_p<z>x_q>\cdots>x_1}
\frac{dy_1}{1-y_1}\cdots \frac{dy_p}{1-y_p}\frac{dz}{z}\frac{dx_q}{1-x_q}\cdots \frac{dx_1}{1-x_1}\\
&=\int_{x'_1>\cdots>x'_q>z'<y'_p<\cdots<y'_1}
\frac{dx'_1}{x'_1}\cdots \frac{dx'_q}{x'_q}\frac{dz'}{1-z'}\frac{dy'_p}{y'_p}\cdots \frac{dy'_1}{y'_1}\\
&=\binom{p+q}{p}\int_{z'<w_1<\cdots<w_{p+q}}\frac{dz'}{1-z'}\frac{dw_1}{w_1}\cdots \frac{dw_{p+q}}{w_{p+1}}\\
&=\binom{p+q}{p}\zeta(p+q+1)\\
&=(\text{RHS of \eqref{for:Chen}}).
\end{align*} 
\end{rmk}


\section*{Acknowledgement}
 We would like to express our appreciation to all those who gave us valuable advice for this article: 
 Prof. Masatoshi Noumi who provided expertise that greatly helped us to prove the results on Schur multiple zeta functions in Section~\ref{sec:Macdonald}, 
 Prof. Masanobu Kaneko who gave guidance in quasi-symmetric functions and inspired us to establish the generalized result for such functions,
 Prof. Takeshi Ikeda who gave meaningful suggestion for our work and 
 Prof. Michael E. Hoffman who pointed out a mistake in Example~\ref{ex:n4}.
 We also would like to thank Prof. Hiroshi Naruse, Prof. Takashi Nakamura, Prof. Soichi Okada and Prof. Yasuo Ohno for their useful comments in many aspects.
 Moreover, the third-named author is very grateful to the Max-Planck-Institut f\"ur Mathematik in Bonn
 for the hospitality and support during his research stay at the institute.
 Finally, the authors thanks the referees for many comments which improve the paper.



\bigskip

\noindent
\textsc{Maki Nakasuji}\\
 Department of Information and Communication Science, Faculty of Science, \\
 Sophia University, Tokyo, Japan \\
 \texttt{nakasuji@sophia.ac.jp}

\medskip

\noindent
\textsc{Ouamporn Phuksuwan}\\
 Department of Mathematics and Computer Science, Faculty of Science, \\
 Chulalongkorn University, Bangkok, Thailand \\
 \texttt{ouamporn.p@chula.ac.th}

\medskip

\noindent
\textsc{Yoshinori Yamasaki}\\
 Graduate School of Science and Engineering, \\
 Ehime University, Ehime, Japan \\
 \texttt{yamasaki@math.sci.ehime-u.ac.jp}
 
\end{document}